\newcounter{ipotesi}
 \makeatletter \@addtoreset{equation}{section}
\newcommand{\scal}[2]{{\left\langle #1,#2\right\rangle}}
\providecommand{\U}[1]{\protect\rule{.1in}{.1in}}
\newtheorem{theorem}{Theorem}[section]
\newtheorem{definition}[theorem]{Definition}
\newtheorem{example}[theorem]{Example}
\newtheorem{lemma}[theorem]{Lemma}
\newtheorem{proposition}[theorem]{Proposition}
\newtheorem{remark}[theorem]{Remark}
\def\N{\mathbb N}
\def\R{\mathbb R}
\newcommand{\K}{\mathcal K}
\newcommand{\J}{{\mathcal{D}}}
\newcommand{\Id}{{\operatorname{Id}}}
\newcommand{\ra}{\rightarrow}
\newcommand{\eps}{{\varepsilon}}
\newcommand{\set}[1]{{\left\{#1\right\}}}
\newcommand{\gen}[1]{{\left\langle #1\right\rangle}}
\newcommand{\abs}[1]{{\left|#1\right|}}
\newcommand{\norm}[1]{{\left\|#1\right\|}}
\newcommand{\eqsys}[1]{{\left\{\begin{array}{ll}#1\end{array}\right.}}
\newcommand{\tc}{\, \middle |\,}
\begin{document}
\frenchspacing

\title[Differentiability in infinite dimension and Malliavin calculus]{Differentiability in infinite dimension and Malliavin calculus}


\author[D. A. Bignamini, S. Ferrari, S. Fornaro and M. Zanella]{Davide A. Bignamini, Simone Ferrari$^*$, Simona Fornaro, Margherita Zanella}

\address{D. A. Bignamini: Dipartimento di Scienza e Alta Tecnologia (DISAT), Universit\`a degli Studi dell'In\-su\-bria, Via Valleggio 11, 22100 COMO, Italy}
\email{\textcolor[rgb]{0.00,0.00,0.84}{da.bignamini@uninsubria.it}}

\address{S. Ferrari: Dipartimento di Matematica e Fisica ``Ennio De Giorgi'', Universit\`a del Salento, Via per Arnesano snc, 73100 LECCE, Italy}
\email{\textcolor[rgb]{0.00,0.00,0.84}{simone.ferrari@unisalento.it}}

\address{S. Fornaro: Dipartimento di Matematica ``Felice Casorati'', Universit\`a degli studi di Pavia, via A. Ferrata 5, 27100 PAVIA, Italy}
\email{\textcolor[rgb]{0.00,0.00,0.84}{simona.fornaro@unipv.it}}

\address{M. Zanella: Dipartimento di Matematica ``Francesco Brioschi'', Politecnico di Milano, Via E. Bonardi 13, 20133 MILANO, Italy.}
\email{\textcolor[rgb]{0.00,0.00,0.84}{margherita.zanella@polimi.it}}

\thanks{$^*$Corresponding author.}

\begin{abstract}
In this paper we study two notions of differentiability introduced by P. Cannarsa and G. Da Prato (see \cite{CDP96}) and L. Gross (see \cite{GRO1}) in both the framework of infinite dimensional analysis and the framework of Malliavin calculus.
\end{abstract}
       
\subjclass[2020]{28C20, 46G05.}

\keywords{Malliavin calculus, Malliavin derivative, interpolation theory, Lasry--Lions approximantion.}

\date{\today}

\maketitle


 \tableofcontents 

 \section{Introduction}
 
The problem of differentiability along subspaces arises in a natural way in the study of  differential equations for functions of infinitely many variables. 
Over the years, regularity properties along subspaces, such as H\"olderianity and Lipschizianity, have become increasingly central in the theory of infinite dimensional analysis. Various authors have introduced many definitions for these regularity properties, the most widely used are two. One introduced by L. Gross in \cite{GRO1} and systematically presented by V. I. Bogachev in \cite{BOGIE1}, the other one introduced by P. Cannarsa and G. Da Prato in \cite{CDP96} and later developed by E. Priola in his Ph.D. thesis, see \cite{PRI1}. The main purpose of this paper is to compare the Gross and Cannarsa--Da Prato notions of differentiability. 
We will begin by relating the two different notions of gradients along subspaces, introduced in \cite{CDP96} and \cite{GRO1}, when these operators act on a class of sufficiently smooth functions. We will then turn to the specific case where the subspace along which to differentiate is the Cameron--Martin space associated to a given Gaussian measure. In this framework, it is possible to extend such operators to spaces of less regular functions, i.e., Sobolev spaces with respect to the reference Gaussian measure. Such extensions are called Malliavin derivatives. The central result of this paper will be to rigorously show that the Gross and the Cannarsa--Da Prato Malliavin derivatives are two \textit{different} operators (although linked by a relationship that we will clarify) that still have the \textit{same} Sobolev space as their  domain.
This work should therefore be understood as a review of existing results with the specific purpose of relating them through rigorous proofs. Moreover we will also provide the proofs of some results that, to the best of our knowledge, are not present in the literature.


More in details, in Section \ref{sec_2} we recall the notions of differentiability given in \cite{CDP96} and \cite{GRO1} and investigate their relation.
In Subsection \ref{GrossY}, given a separable Hilbert space $H_0$ continuously embedded in a separable Hilbert space $H$, we recall the definition of differentiability along $H_0$ presented by L. Gross in \cite{GRO1} for functions with values in a Banach space $Y$. Over the years, this notion has became essential to prove many regularity results for stationary and evolution equations for functions of infinitely many variables both in spaces of continuous functions and in Sobolev spaces, see for instance \cite{Add2021,ACF20,ACF22,Add-Men-Mir2020,Addo1,Add-Men-Mir2023,ABF21,BF22,BF20,BFS,BFS2,CF18,CL19,LR21}. 
In Subsection \ref{BeppeDif}, given a linear bounded self-adjoint operator $R:H\ra H$ we define the differentiability along the directions of $H_R:=R(H)$ (see Section \ref{BeppeDif}) presented by P. Cannarsa and G. Da Prato in \cite{CDP96}.  This notion of differentiability has also been widely used over the years, see for instance \cite{Add-Ban-Mas2020,Add-Mas-Pri2023, Big21, DA-LU2,DA-LU3, DA-LU-TU1, DA-TU2,GOZ1,MAS2,MAS1, PRI1,Pri99,PZ00}.
When $H_0=H_R$ one can compare the above mentioned notions of differentiability, this is done in Subsection \ref{RvsH} 
where we provide the relationship between the Gross derivatives of order $n$ and the Cannarsa--Da Prato derivatives of order $n$. We highlight that a first comparison between these two derivatives has been already presented in \cite{PRI0} but in the specific case of a injective operator $R$. Subsection \ref{Real} is devoted to the comparison of the above mentioned notions of differentiability with the classical notions of Gateaux and Fr\'echet differentiability. 

The results of Section \ref{sec_2} lay the ground for the comparison of the Malliavin derivatives that naturally appear in the setting considered by L. Gross and P. Cannarsa and G. Da Prato when a Gaussian framework comes into play. This is the content of Section \ref{Mal_top_sec}. On a separable Hilbert space $H$, endowed with its Borel $\sigma$-algebra $\mathcal{B}(H)$, one considers a centered (that is with zero mean) Gaussian measure $\gamma$ with covariance operator $Q$, with $Q:H \rightarrow H$ a linear, self-adjoint, non-negative and trace class operator. The subspace along which to differentiate is the Cameron--Martin space associated to the Gaussian measure, that is $H_0=Q^{1/2}(H)=:H_{Q^{1/2}}$.
It is classical to prove (see e.g. \cite{BOGIE1} and \cite{DaPrato}) that the gradient operators $\nabla_{H_{Q^{1/2}}}$ and $\nabla_{Q^{1/2}}$, in the sense of Gross and Cannarsa--Da Prato, respectively, are closable operators in $L^p(H,\mathcal{B}(H), \gamma)$, $p\ge1$. Their extensions are called Malliavin derivatives and the domain of their extension is a Sobolev space with respect to the measure $\gamma$. We refer to these two Malliavin derivatives as the Malliavin derivative in the sense of Gross and the Malliavin derivative in the sense of Cannarsa--Da Prato, respectively. In Subsections \ref{Bogachev}, \ref{DaPrato} and \ref{fin_rem} we recall the construction of these two Malliavin derivatives and prove that they are indeed two  different operators linked by the relation $\nabla_{H_{Q^{1/2}}}=Q^{1/2}\nabla_{Q^{1/2}}$. Nevertheless these two derivatives, although different, have the same Sobolev space as their  domain.   

In order to make a rigorous comparison between the Malliavin derivative in the sense of Gross and Cannarsa--Da Prato, it is convenient to approach the Malliavin calculus from a more abstract point of view, as done, for example, in \cite{Nualart}. We briefly recall this approach to Malliavin calculus  in Appendix \ref{Malliavin_abstract_sec}.

At first glance it might seem strange to refer to Malliavin derivatives that are different, since one usually speaks of \textit{the} Malliavin derivative. We point out that in fact it would be more appropriate to speak of \textit{a} (choice of) Malliavin derivative rather than \textit{the} Malliavin derivative. In fact, as explained in details in Appendix \ref{Malliavin_abstract_sec}, one can construct infinitely many different Malliavin derivative operators. 
On the other hand, it turns out that all these Malliavin derivatives have the same domain when somehow the Gaussian framework is the same. 
In a sense, the results of Section \ref{Mal_top_sec} can be considered as an example of this general fact in a concrete situation: we deal with two particular Malliavin derivatives that naturally appear in the literature for the study of various problems. However, these Malliavin derivatives are just two possible choices among the infinite possible ones that can be considered in that specific Gaussian framework. 


We emphasize here that the general framework for Malliavin calculus considered in \cite{Nualart} not only proves useful in understanding the relationship between different Malliavin derivatives that appear in the literature in various contexts but also turns out to be particularly flexible for dealing with various problems. We mention, for example, the study of the regularity of solutions to stochastic partial differential equations (see, e.g., \cite{Bally1998,DKN2009,Marinelli2013,Marquez-Carreras2001, Mueller2008, Sard_Nualart, Pardoux1993} for parabolic-type stochastic partial differential equations, \cite{BoZa2,BoZa1} for equations with boundary noise, \cite{MilletSanz, Sar_San2, Sanz_Sardanyons} for the stochastic wave equation, \cite{FerZan, NualartZaidi1997, Morien1999} for fluid-dynamics stochastic partial differential equations, \cite{Cardon-Weber2001} for the stochastic Cahn-Hilliard equation), the study of density formulae and concentration inequalities (see e.g. \cite{NV2009}), the study of ergodic problems (see, among others, \cite{HaiMat}), or even the study of integration by parts formulas on level sets in infinite dimensional spaces (see, e.g., \cite{Addo1,BDPT,BoTuZa,DPLT2}). Moreover, there are applications to finance, see e.g. \cite{AL21}, and to numerical analysis (see, e.g., \cite{Talay1,Talay2, Crisan,Roz}).


Section \ref{Lasry} should be interpreted as an application of the results of Section \ref{sec_2}. We establish an interpolation result (see Theorem \ref{intDS}). In \cite[Section 3]{BFS} and \cite[Proposition 2.1]{CDP96}, two interpolation results analogous to Theorem \ref{intDS} are proven. The one in \cite[Proposition 2.1]{CDP96} is in the sense of Cannarsa--Da Prato differentiability, while the one in \cite[Section 3]{BFS} is in the sense of Gross differentiability. Theorem \ref{intDS} covers the degenerate case, which is not included in \cite[Section 3]{BFS} and \cite[Proposition 2.1]{CDP96} (see Remark \ref{dege}). This improvement is possible due to some regularity results about Lasry--Lions type approximants that are finer than those found in the literature (see, for example, \cite{BFS,CDP96}). These results can be found in Section \ref{Mourinho} and are of interest regardless of Theorem \ref{intDS}. Finally, we recall that interpolation theorems are useful for Schauder regularity results for Ornstein--Uhlenbeck type operators in infinite dimensions, see, for instance, \cite{BFS,CDP12,CL19,DaPrato13,PZ00}.

\subsection*{Notations}

In this section we recall the standard notations that we will use throughout the paper. We refer to \cite{FHH11} and \cite{RE-SI1} for notations and basic results about linear operators and Banach spaces. Throughout the paper, all Banach and Hilbert spaces are supposed to be real.

Let $\K_1$ and $\K_2$ be two Banach spaces equipped with the norms $\norm{\cdot}_{\K_1}$ and $\norm{\cdot}_{\K_2}$, respectively. Let $H$ be a Hilbert space equipped with the inner product $\scal{\cdot}{\cdot}_{H}$ and associated norm $\norm{\cdot}_H$.

For any $k\in\N$, let $\mathcal{L}^{(k)}(\K_1;\K_2)$ be the space of continuous multilinear mappings from $\K_1^k$ to $\K_2$ endowed with the norm
\[\norm{T}_{\mathcal{L}^{(k)}(\K_1;\K_2)}:=\sup_{\substack{h_1,\ldots,h_k\in \K_1\setminus\set{0}}}\frac{\|T(h_1,\ldots,h_k)\|_{\K_2}}{\|h_1\|_{\K_1}\cdots\|h_k\|_{\K_1}}.\]
If $\K_2=\K_1$ we use the notation $\mathcal{L}^{(k)}(\K_1)$. If $k=1$ we write $\mathcal{L}(\K_1;\K_2)$ and $\mathcal{L}(\K_1)$, respectively. If $\K_2=\R$ and $k=1$ we use the standard notation $\K_1^*:=\mathcal{L}(\K_1;\R)$ to denote the topological dual of $\K_1$. By convention we set $\mathcal{L}^{(0)}(\K_1):=\K_1$. We denote by $\Id_{\K_1}$  the identity operator from $\K_1$ to itself. 

We say that $Q\in\mathcal{L}(H)$ is a non-negative (positive) operator if 
\(
\gen{Qx,x}_H\geq 0\ (>0),
\)
for every $x\in H\setminus\set{0}$.
$Q \in \mathcal{L}(H)$ is a non-positive (negative) operator if the operator $-Q$ is non-negative (positive). Let $Q\in\mathcal{L}(H)$ be a non-negative and self-adjoint operator. We say that $Q$ is a trace class operator, if
\begin{align}\label{SantaClaus}
{\rm Trace}[Q]:=\sum_{n=1}^{+\infty}\scal{Qe_n}{e_n}_H<+\infty,
\end{align}
for some (and hence for all) orthonormal basis $\{e_n\}_{n\in\N}$ of $H$. We recall that the definition of trace is independent of the choice of the orthonormal basis in \eqref{SantaClaus}. 

We denote by $\mathcal{B}(\K_1)$  the family of the Borel subsets of $\K_1$. $B_b(\K_1;\K_2)$ is the set of the bounded and Borel measurable functions from $\K_1$ to $\K_2$. If $\K_2=\R$ we simply write $B_b(\K_1)$. $C_b(\K_1;\K_2)$ (${\rm BUC}(\K_1;\K_2)$, respectively) is the space of bounded and continuous (uniformly continuous, respectively) functions from $\K_1$ to $\K_2$. If $\K_2=\R$ we write $C_b(\K_1)$ (${\rm BUC}(\K_1)$, respectively). Both $C_b(\K_1;\K_2)$ and ${\rm BUC}(\K_1;\K_2)$ are Banach spaces if endowed with the norm
\[
\norm{f}_{\infty}=\sup_{x\in \K_1}\|f(x)\|_{\K_2}.
\]

\section{Differentiability along subspaces}
\label{sec_2}



In this section we present the notions of differentiability along subspaces considered in \cite{CDP96} and \cite{GRO1}. In Subsection \ref{GrossY}, we present the notion of differentiability along a Hilbert subspace first considered by L. Gross in \cite{GRO1,KUO1} for vector valued functions. 
This notion often appears in the literature, in particular in the study of transition semigroups associated with stochastic partial differential equations. For example, in \cite{ABF21}, a Harnack-type inequality is investigated. In \cite{BF20, BFS, CL19, LR21}, results regarding Schauder regularity are explored, and in \cite{ACF20, ACF22, BF22, CF16, CF18}, the Sobolev theory is examined. It is also worth mentioning \cite{Add-Men-Mir2020, Addo1, Add-Men-Mir2023}, where integration by parts formulas on open convex domains are studied.

For the sake of clarity, in Subsubsection \ref{Gross_real} we rewrite some definitions of Subsection \ref{GrossY} in the special case of real-valued functions.
In Subsection  \ref{BeppeDif} we recall the notion of differentiability along a particular subspace $H_0$ of a Hilbert space $H$ given by P. Cannarsa and G. Da Prato in \cite{CDP96} and later revised by E. Priola in \cite[Sections 1.2 and 1.3]{PRI1}. 
This approach is also widely employed in the literature. For example, in \cite{FE-GO2, FE-GO1, GOZ1, MAS2, MAS1, Pri99, PZ00}, it is applied to study the regularity properties of transition semigroups in Banach spaces. Additionally, in \cite{Add-Mas-Pri2023, MA-PR2, MA-PR1}, applications to the regularization by noise theory can be found.

Subsection \ref{RvsH} focus on the comparison between the two above mentioned notions of differentiability.
Finally, in Subsection \ref{Real} we make clear their relation with the classical notions of Fr\'echet and Gateaux differentiability.

\subsection{Differentiability in the sense of Gross}\label{GrossY}
Here we introduce the notion of Gross differentiability. 
We thought it appropriate to prove some results concerning differentiability in the sense of Gross in a rather general setting, since these results are used in many papers \cite{ACF20,ACF22,ABF21,BF22,BF20,BFS,CF16,CF18,CL19,LR21}. 
Throughout this subsection $X$ and $Y$ will denote two separable Banach spaces endowed with the norm $\norm{\cdot}_X$ and $\norm{\cdot}_Y$, respectively, and $H_0$ will denote a separable Hilbert space equipped with the inner product $\langle\cdot,\cdot\rangle_{H_0}$ and associated norm $\norm{\cdot}_{H_0}$. We assume $H_0$ to be continuously embedded in $X$, namely there exists $C>0$ such that 
\begin{equation}\label{immercontinua}
\norm{h}_X\leq C\norm{h}_{H_0}, \qquad h \in H_0.
\end{equation}

Let us start by recalling the notions of $H_0$-continuity and $H_0$-Lipschitzianity.
\begin{definition}
We say that a function $\varphi:X\ra Y$ is $H_0$-continuous at $x\in X$ if
\begin{align*}
\lim_{\norm{h}_{H_0}\ra 0}\norm{\varphi(x+h)-\varphi(x)}_Y=0.
\end{align*}
$\varphi$ is $H_0$-continuous if it is $H_0$-continuous at any point $x\in X$.
We say that $\varphi:X\ra Y$ is \emph{$H_0$-Lipschitz} if there exists a positive constant $L_{H_0}$ such that for any $x\in X$ and $h\in H_0$ it holds
\begin{align}\label{H-Lip}
\|\varphi(x+h)-\varphi(x)\|_Y\leq L_{H_0}\norm{h}_{H_0}.
\end{align}
The infimum of all the possible constants $L_{H_0}$ appearing in \eqref{H-Lip} is called $H_0$-Lipschitz constant of $\varphi$.
\end{definition}

We now introduce the notions of Fr\'echet and Gateaux differentiability along $H_0$.
\begin{definition}\label{H-frechet}
We say that a function $\varphi:X\ra Y$ is $H_0$-Fr\'echet differentiable at $x\in X$ if there exists $L_x\in \mathcal{L}(H_0;Y)$ such that 
\[
\lim_{\norm{h}_{H_0}\ra 0}\frac{\|\varphi(x+h)-\varphi(x)-L_xh\|_Y}{\norm{h}_{H_0}}=0.
\]
The operator $L_x$ is unique and it is called $H_0$-Fr\'echet derivative of $\varphi$ at $x\in X$. We set $\J_{H_0} \varphi(x):=L_x$. We say that $\varphi$ is $H_0$-Fr\'echet differentiable if it is $H_0$-Fr\'echet differentiable at any point $x\in X$.

We say that $\varphi$ is twice $H_0$-Fr\'echet differentiable at $x\in X$ if $\varphi$ is $H_0$-Fr\'echet differentiable and the mapping $\J_{H_0} \varphi:X\ra \mathcal{L}(H_0;Y)$ is $H_0$-Fr\'echet differentiable. We call second order $H_0$-Fr\'echet derivative of $\varphi$ at $x\in X$ the unique $\J_{H_0}^2 \varphi(x)\in \mathcal{L}^{(2)}(H_0;Y)$ defined by
\[
\J^2_{H_0} \varphi(x)(h,k):=\J_{H_0} (\J_{H_0} \varphi(x)h)k,\quad h,k\in H_0.
\]
\noindent
In a similar way, for any $k\in\N$ we introduce the notion of $k$-times $H_0$-Fr\'echet differentiability of $\varphi$ and we denote by $\J_{H_0}^k \varphi(x)$ its $k$-order $H_0$-Fr\'echet derivative at $x\in X$. In particular $\J_{H_0}^k \varphi(x)$ belongs to $\mathcal{L}^{(k)}(H_0;Y)$.
We say that $\varphi$ is $k$-times $H_0$-Fr\'echet differentiable if it is $k$-times $H_0$-Fr\'echet differentiable at any point $x\in X$.
\end{definition}

\begin{definition}\label{H-Gateaux}
We say that a function $\varphi:X\ra Y$ is $H_0$-Gateaux differentiable at $x\in X$ if there exists $L_x\in\mathcal{L}(H_0;Y)$ such that for any $h\in H_0$ 
\[
\lim_{s\ra 0}\norm{\frac{\varphi(x+sh)-\varphi(x)}{s}-L_xh}_Y=0.
\]
The operator $L_x$ is unique and it is called $H_0$-Gateaux derivative of $\varphi$ at $x\in X$. We set $\J_{G,H_0} \varphi(x):=L_x$. 
For any $k\in\N$ (in an analogous way of Definition \ref{H-frechet}) we can define the notion of $k$-times $H_0$-Gateaux differentiability of a function $\varphi$ and we denote by $\J_{G,H_0}^k \varphi(x)$ its $k$-order $H_0$-Gateaux derivative at $x\in X$, in particular $\J^k_{G,H_0} \varphi(x)$ belongs to $\mathcal{L}^{(k)}(H_0;Y)$.
We say that $\varphi$ is $k$-times $H_0$-Gateaux differentiable if it is $k$-times $H_0$-Gateaux differentiable at any point $x\in X$.
%
\end{definition}
If $X$ is a Hilbert space and $X=H_0$, then Definitions \ref{H-frechet} and  \ref{H-Gateaux} are the classical notions of Fr\'echet and Gateaux differentiability, respectively, and in this case we will use the notation $\J \varphi$ and $\J_{G} \varphi$, respectively. If $\varphi:X\ra Y$ is $H_0$-Fr\'echet differentiable, then it is $H_0$-Gateaux differentiable and $\J_{H_0}\varphi=\J_{G,H_0}\varphi$; the converse is false. The following result provides a sufficient condition for the equivalence of $H_0$-Fr\'echet and $H_0$-Gateaux differentiability.

\begin{theorem}\label{Gateaux-Frechet}
Let $\varphi:X\ra Y$ be a $H_0$-continuous function. If $\varphi$ is $H_0$-Gateaux differentiable and $\J_{G,H_0}\varphi:X\ra \mathcal{L}(H_0,Y)$ is $H_0$-continuous, then $\varphi$ is $H_0$-Fr\'echet differentiable and $\J_{H_0}\varphi=\J_{G,H_0}\varphi$.
\end{theorem}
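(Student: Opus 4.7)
My plan is the classical line-segment argument that promotes a continuous Gateaux derivative to a Fr\'echet derivative, transported to the Gross setting. Fix $x\in X$. For $h\in H_0$ I introduce the auxiliary curve $g:[0,1]\to Y$, $g(t):=\varphi(x+th)$, and aim at the integral representation
\[
\varphi(x+h)-\varphi(x)=\int_0^1\J_{G,H_0}\varphi(x+th)h\,dt.
\]
From this the Fr\'echet property will drop out by subtracting $\J_{G,H_0}\varphi(x)h=\int_0^1\J_{G,H_0}\varphi(x)h\,dt$ and estimating in the $Y$-norm.

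To justify the representation I verify, in order: (i) $g$ is continuous on $[0,1]$, since $t_n\to t$ implies $\|(t_n-t)h\|_{H_0}=|t_n-t|\|h\|_{H_0}\to 0$, and $H_0$-continuity of $\varphi$ converts this to $g(t_n)\to g(t)$ in $Y$; (ii) $g$ is differentiable at every $t\in[0,1]$ with $g'(t)=\J_{G,H_0}\varphi(x+th)h$, which is precisely the definition of $H_0$-Gateaux differentiability of $\varphi$ at the base point $x+th$ applied to the increment $sh$ with $s\to 0$; (iii) $g'$ is continuous on $[0,1]$, since the $H_0$-continuity of $\J_{G,H_0}\varphi$ at $x+th$ gives $\J_{G,H_0}\varphi(x+t_nh)\to\J_{G,H_0}\varphi(x+th)$ in $\mathcal{L}(H_0;Y)$ as $t_n\to t$, and evaluating at the fixed vector $h$ transfers this to $Y$-norm convergence. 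With $g\in C^1([0,1];Y)$ in hand, the Banach-valued fundamental theorem of calculus yields the desired representation.

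Using the bound $\|Th\|_Y\le\|T\|_{\mathcal{L}(H_0;Y)}\|h\|_{H_0}$ pointwise inside the integral, I then obtain
\[
\|\varphi(x+h)-\varphi(x)-\J_{G,H_0}\varphi(x)h\|_Y\le\|h\|_{H_0}\sup_{t\in[0,1]}\|\J_{G,H_0}\varphi(x+th)-\J_{G,H_0}\varphi(x)\|_{\mathcal{L}(H_0;Y)}.
\]
Since $\|th\|_{H_0}\le\|h\|_{H_0}$ for $t\in[0,1]$, the $H_0$-continuity of $\J_{G,H_0}\varphi$ at $x$ forces the supremum to vanish as $\|h\|_{H_0}\to 0$, which proves simultaneously the $H_0$-Fr\'echet differentiability of $\varphi$ at $x$ and the identification $\J_{H_0}\varphi(x)=\J_{G,H_0}\varphi(x)$. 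The only step with real substance is the vector-valued fundamental theorem of calculus for continuously differentiable $Y$-valued maps on a compact interval; if one wishes to avoid invoking it, the cleanest workaround is to verify that $F(t):=g(t)-\int_0^t g'(s)\,ds$ has vanishing derivative on $[0,1]$ by testing against functionals $y^*\in Y^*$ and applying the scalar mean value theorem, and then to deduce from Hahn--Banach that $F$ is constant equal to $g(0)$. Everything else in the argument is a direct combination of the definitions with the observation that the segment $\{x+th:t\in[0,1]\}$ moves only along the Hilbert direction $H_0$, so that the hypothesized $H_0$-continuities apply.
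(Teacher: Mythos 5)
Your argument is correct, and it amounts to a self-contained proof of the step that the paper outsources to a reference. The paper's proof introduces the auxiliary map $g_x(h):=\varphi(x+h)$ on $H_0$, checks that it is continuous and Gateaux differentiable with continuous Gateaux derivative, and then invokes the classical result \cite[4.1.7.~Corollary 1]{Fre80} (a continuous Gateaux derivative is a Fr\'echet derivative) to conclude that $g_x$ is Fr\'echet differentiable at $0$, which translates back to $H_0$-Fr\'echet differentiability of $\varphi$ at $x$. You instead prove that classical step directly: you establish the $C^1$ regularity of $t\mapsto\varphi(x+th)$ on $[0,1]$ (the endpoint derivatives exist one-sidedly as restrictions of the two-sided Gateaux limits, so this is legitimate on the closed interval), apply the vector-valued fundamental theorem of calculus --- or its Hahn--Banach reduction to the scalar mean value theorem, which you correctly sketch --- and obtain the estimate $\|\varphi(x+h)-\varphi(x)-\J_{G,H_0}\varphi(x)h\|_Y\le\|h\|_{H_0}\sup_{t\in[0,1]}\|\J_{G,H_0}\varphi(x+th)-\J_{G,H_0}\varphi(x)\|_{\mathcal{L}(H_0;Y)}$, whose right-hand side is $o(\|h\|_{H_0})$ because $\|th\|_{H_0}\le\|h\|_{H_0}$ and $\J_{G,H_0}\varphi$ is $H_0$-continuous at $x$. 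Both routes rest on the same key observation, namely that the segment $\{x+th: t\in[0,1]\}$ moves only along the $H_0$-coset of $x$, so every hypothesized $H_0$-continuity applies along it; what your version buys is independence from the external citation and an explicit quantitative remainder bound, while the paper's version buys brevity by reducing to the known Hilbert-space statement.
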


\begin{proof}
We refer to \cite[4.1.7. Corollary 1]{Fre80} for the case in which $X$ is a Hilbert space and $X=H_0$. If $H_0\subsetneqq X$, given $x\in X$ let us consider the function $g_x:H_0\ra Y$ defined by
\[
g_x(h):=\varphi(x+h),\qquad h\in H_0.
\]
Since $\varphi$ is $H_0$-continuous,  $g_x:H_0\ra Y$ is continuous. 
By the $H_0$-Gateaux differentiability of $\varphi$, for any $x\in X$ and $h,k\in H_0$, we infer
\begin{align*}
&\lim_{s\ra 0}\norm{\frac{g_x(h+sk)-g_x(h)}{s}-\J_{G,H_0}\varphi(x+h)k}_Y\\
&\qquad\qquad\qquad\qquad=\lim_{s\ra 0}\norm{\frac{\varphi(x+h+sk)-\varphi(x+h)}{s}-\J_{G,H_0}\varphi(x+h)k}_Y=0.
\end{align*}
Thus $g_x$ is Gateaux differentiable at $h\in H_0$ and $\J_Gg_x(h)=\J_{G,H_0}\varphi(x+h)$, for any $x\in X$ and $h\in H_0$. Since $\J_{G,H_0}\varphi:X\ra \mathcal{L}(H_0;Y)$ is $H_0$-continuous by assumption, it follows that $\J_Gg_x:H_0\ra \mathcal{L}(H_0;Y)$ is continuous. By \cite[4.1.7. Corollary 1]{Fre80} we thus infer that $g_x:H_0\ra\R$ is Fr\'echet differentiable at $0$ and $\J_Gg_x(0)=\J g_x(0)$. To conclude, we observe that for any $x\in X$
\begin{align*}
\lim_{\norm{h}_{H_0}\ra 0}&\frac{\|\varphi(x+h)-\varphi(x)-\J g_x(0)h\|_Y}{\norm{h}_{H_0}}=\lim_{\norm{h}_{H_0}\ra 0}\frac{\|g_x(h)-g_x(0)-\J g_x(0)h\|_Y}{\norm{h}_{H_0}}=0,
\end{align*}
so that $\varphi$ is $H_0$-Fr\'echet differentiable at $x\in H$ and $\J_{H_0}\varphi(x)=\J_{G,H_0}\varphi(x)$.
\end{proof}

In the next propositions we collect some basic properties of the $H_0$-Fr\'echet and $H_0$-Gateaux differentiability. The chain rule is particularly valuable, among other tools, especially for establishing gradient estimates for a transition semigroup associated with a stochastic partial differential equations. These estimates are extensively used to investigate both the Schauder and Sobolev regularity of Kolmogorov equations linked to the transition semigroup; please refer to the citations provided in the introduction of this section.

\begin{proposition}
Let $Z$ be a Banach space equipped with the norm $\norm{\cdot}_Z$. If $f:X\ra Y$ is $H_0$-Gateaux differentiable at $x_0\in X$ and $g: Y\rightarrow Z$ is Fr\'echet differentiable at $y_0=f(x_0)$, then $g\circ f$ is $H_0$-Gateaux differentiable at $x_0$ and its $H_0$-Gateaux derivative is $\J g(y_0)\circ \J_{G,H_0}f(x_0)$.
\end{proposition}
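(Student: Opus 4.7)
The plan is to unfold the chain rule in the most direct way: fix $h \in H_0$, form the difference quotient $s^{-1}[(g\circ f)(x_0+sh) - (g\circ f)(x_0)]$, and compare it to the candidate derivative $\J g(y_0)\,\J_{G,H_0}f(x_0)h$ using a single $\varepsilon$-argument based on the Fr\'echet expansion of $g$ at $y_0$.

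First I would observe that the candidate derivative indeed defines an element of $\mathcal{L}(H_0;Z)$, since it is the composition of $\J_{G,H_0}f(x_0) \in \mathcal{L}(H_0;Y)$ and $\J g(y_0) \in \mathcal{L}(Y;Z)$. Next I would record a small preliminary fact: the $H_0$-Gateaux differentiability of $f$ at $x_0$ along the direction $h$ implies that $s \mapsto f(x_0+sh)$ is continuous at $s=0$ with values in $Y$. Indeed, by Definition \ref{H-Gateaux},
\begin{equation*}
\Bigl\|\tfrac{f(x_0+sh)-f(x_0)}{s}\Bigr\|_Y \le \|\J_{G,H_0}f(x_0)h\|_Y + 1 =: M
\end{equation*}
for all sufficiently small $s\neq 0$, whence $\|f(x_0+sh)-y_0\|_Y \le M|s| \to 0$.

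Now I would exploit the Fr\'echet differentiability of $g$ at $y_0$: given $\varepsilon>0$, there exists $\delta>0$ such that $\|g(y)-g(y_0)-\J g(y_0)(y-y_0)\|_Z \le \varepsilon \|y-y_0\|_Y$ whenever $\|y-y_0\|_Y<\delta$. Choosing $|s|$ small enough so that $\|f(x_0+sh)-y_0\|_Y<\delta$ (possible by the previous step), and setting $y_s := f(x_0+sh)$, I would write
\begin{equation*}
\frac{g(y_s)-g(y_0)}{s} - \J g(y_0)\,\J_{G,H_0}f(x_0)h \;=\; \J g(y_0)\!\left[\frac{y_s-y_0}{s}-\J_{G,H_0}f(x_0)h\right] + \frac{R(y_s)}{s},
\end{equation*}
where $\|R(y_s)\|_Z \le \varepsilon\|y_s-y_0\|_Y \le \varepsilon M|s|$. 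The first term tends to $0$ in $Z$ by continuity of $\J g(y_0)$ and the $H_0$-Gateaux differentiability of $f$, while the second is bounded by $\varepsilon M$. Letting $s\to 0$ and then $\varepsilon \to 0$ yields the desired limit.

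The only mild subtlety is the continuity preamble needed to activate the Fr\'echet expansion of $g$ near $y_0$; once that is in place the rest is a routine triangle-inequality estimate, so I do not anticipate any serious obstacle.
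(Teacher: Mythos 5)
Your proposal is correct and follows essentially the same route as the paper's proof: both split the difference quotient into $\J g(y_0)$ applied to the Gateaux remainder of $f$ plus the Fr\'echet remainder of $g$ evaluated at $f(x_0+sh)-y_0$, and send each piece to zero. The only (welcome) difference is that you make explicit the bound $\|f(x_0+sh)-y_0\|_Y\le M|s|$ needed to control the second remainder, a step the paper leaves implicit when it asserts $t_n^{-1}\|S(t_ny_n)\|_Z\to 0$.
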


\begin{proof}
Let $h\in H_0$ and let  $(t_n)_{n\in\N}$ be an infinitesimal sequence of positive real numbers. We consider the sequence $(z_n)_{n\in\N}\subseteq Z$ defined as
\begin{align*}
z_n:=g(f(x_0+t_nh))-g(f(x_0))-t_n (\J g(y_0)\circ \J_{G,H_0}f(x_0))h.
\end{align*}
We need to prove that $(t_n^{-1}\norm{z_n}_Z)_{n\in\N}$ is an infinitesimal sequence. For $k\in H_0$ and $y\in Y$ set
\begin{align*}
R(k)&:=f(x_0+k)-f(x_0)-\J_{G,H_0}f(x_0)k;\\
S(y)&:=g(y_0+y)-g(y_0)-\J g(y_0)y;
\end{align*}
and 
\begin{align*}
y_n:=\frac{f(x_0+t_n h)-f(x_0)}{t_n}=\J_{G,H_0}f(x_0)h+\frac{R(t_nh)}{t_n}.
\end{align*}
We write 
\begin{align*}
\frac{z_n}{t_n}&=\frac{g(y_0+t_n y_n)-g(y_0)}{t_n}-(\J g(y_0)\circ \J_{G,H_0}f(x_0))h\\
&=\frac{S(t_ny_n)}{t_n}+\J g(y_0)y_n-(\J g(y_0)\circ \J_{G,H_0}f(x_0))h=\frac{S(t_ny_n)}{t_n}+\J g(y_0)\frac{R(t_nh)}{t_n}.
\end{align*}
The $H_0$-Gateaux differentiability of $f$ yields $\lim_{n\ra+\infty}t_n^{-1}\|R(t_nh)\|_Y=0$, whereas 
the Fr\'echet differentiability of $g$ yields $\lim_{n\ra+\infty}t_n^{-1}\|S(t_ny_n)\|_Z=0$. We thus infer $\lim_{n\ra +\infty}t_n^{-1}\norm{z_n}_Z=0$ which concludes the proof.
\end{proof}

\begin{proposition}\label{Lip}
Assume that $\varphi:X\ra Y$ is a $H_0$-Fr\'echet differentiable function and that there exists a constant $M>0$ such that $\|\J_{H_0}\varphi(x)\|_{\mathcal{L}(H_0;Y)}\leq M$, for any $x\in X$. 
The function $\varphi$ is $H_0$-Lipschitz and for every $x\in X$ and $h\in H_0$ it holds
\begin{align*}
\|\varphi(x+h)-\varphi(x)\|_Y\leq M\|h\|_{H_0}.
\end{align*}
\end{proposition}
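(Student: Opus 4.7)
The plan is to reduce the statement to a one-dimensional mean value inequality along slices in $H_0$. Fix $x\in X$ and $h\in H_0$ (the case $h=0$ being trivial), and introduce the curve $\psi:[0,1]\to Y$ defined by $\psi(t):=\varphi(x+th)$. I would first show that $\psi$ is everywhere differentiable on $[0,1]$ with $\psi'(t)=\J_{H_0}\varphi(x+th)h$, then bound $\|\psi'(t)\|_Y$ uniformly via the hypothesis, and finally apply the Banach-space-valued mean value inequality to $\psi$ on $[0,1]$ to conclude $\|\psi(1)-\psi(0)\|_Y\le M\|h\|_{H_0}$.

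For the first step, fix $t\in[0,1]$. For small nonzero $s$ with $t+s\in[0,1]$, write $k:=sh\in H_0$, so that $\|k\|_{H_0}=|s|\,\|h\|_{H_0}$ tends to $0$ as $s\to 0$. A direct rearrangement gives
\[
\left\|\frac{\psi(t+s)-\psi(t)}{s}-\J_{H_0}\varphi(x+th)h\right\|_Y=\|h\|_{H_0}\,\frac{\|\varphi(x+th+k)-\varphi(x+th)-\J_{H_0}\varphi(x+th)k\|_Y}{\|k\|_{H_0}},
\]
and the right-hand side tends to $0$ by the $H_0$-Fréchet differentiability of $\varphi$ at $x+th$ (Definition \ref{H-frechet}). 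Hence $\psi'(t)=\J_{H_0}\varphi(x+th)h$, and the assumed uniform bound on $\J_{H_0}\varphi$ gives
\[
\|\psi'(t)\|_Y\le\|\J_{H_0}\varphi(x+th)\|_{\mathcal{L}(H_0;Y)}\,\|h\|_{H_0}\le M\,\|h\|_{H_0}
\]
for every $t\in[0,1]$.

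It remains to deduce $\|\varphi(x+h)-\varphi(x)\|_Y=\|\psi(1)-\psi(0)\|_Y\le M\|h\|_{H_0}$ from this pointwise bound on $\psi'$. This is exactly the mean value inequality for vector-valued functions of a real variable, and it is the only step where the Banach-space nature of $Y$ really matters, since Rolle's theorem fails outside the scalar setting. The standard way I would handle it is via Hahn--Banach: select $y^{*}\in Y^{*}$ with $\|y^{*}\|_{Y^{*}}=1$ and $y^{*}(\psi(1)-\psi(0))=\|\psi(1)-\psi(0)\|_Y$, and apply the classical scalar mean value theorem to $y^{*}\circ\psi\in C^{1}([0,1];\R)$, whose derivative satisfies $|(y^{*}\circ\psi)'(t)|\le\|\psi'(t)\|_Y\le M\|h\|_{H_0}$. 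This is essentially the only genuine obstacle in the argument, and it is standard.
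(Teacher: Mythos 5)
Your proposal is correct and follows essentially the same route as the paper: parametrize the segment $t\mapsto x+th$, show the composite curve is differentiable with derivative $\J_{H_0}\varphi(x+th)h$, and conclude by a mean value argument. The one place you diverge is actually an improvement: the paper applies the equality-form mean value theorem ($\Psi(1)-\Psi(0)=\Psi'(t_0)$) directly to the curve and writes absolute values, which is only valid when $Y=\R$, whereas the statement concerns a general Banach space $Y$, where Rolle-type equalities fail. Your Hahn--Banach reduction to a scalar function $y^*\circ\psi$ is the standard and correct way to obtain the mean value \emph{inequality} in the vector-valued setting, so your write-up closes a small gap in the paper's own argument. One cosmetic remark: you do not need (and have not established) that $y^*\circ\psi$ is $C^1$; continuity on $[0,1]$ and differentiability on $(0,1)$ suffice for the scalar mean value theorem, and both follow from what you proved.
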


\begin{proof}
The proof is standard, we give it for the sake of completeness. Let $\phi:[0,1]\ra X$ be defined as $\phi(t):=x+th$ and let $\Psi(t):=\varphi(\phi(t))$, for any $t\in[0,1]$.
Observe that $\Psi$ is derivable in $(0,1)$, indeed for $t\in(0,1)$
\begin{align*}
\Psi'(t)&=\lim_{s\ra 0}\frac{\Psi(t+s)-\Psi(t)}{s}=\lim_{s\ra 0}\frac{\varphi(x+(t+s)h)-\varphi(x+th)}{s}=\mathcal{D}_{H_0}\varphi(x+th)h.
\end{align*}
Furthermore $\Psi$ is continuous in $[0,1]$, since $\varphi$ is $H_0$-Fr\'echet differentiale. By the mean value theorem there exists $t_0\in(0,1)$ such that $\Psi(1)-\Psi(0)=\Psi'(t_0)$. Thus
\begin{align*}
|\varphi(x+h)-\varphi(x)|=|\mathcal{D}_{H_0}\varphi(x+t_0h)h|\leq M\norm{h}_{H_0}.
\end{align*}
This concludes the proof.
\end{proof}

\begin{proposition}
Let $\varphi:X\ra Y$ be a $H_0$-Fr\'echet differentiable function, such that for every $x\in H$ it holds $\|\J_{H_0}\varphi(x)\|_{\mathcal{L}(H_0;Y)}=0$. Then for every $x\in X$ and $h\in H_0$ it holds $\varphi(x+h)=\varphi(x)$. Moreover if $H_0$ is dense in $X$ and $\varphi$ is a continuous function, then $\varphi$ is constant.
\end{proposition}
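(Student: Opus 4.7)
The plan is to reduce both assertions to the previous Proposition \ref{Lip} applied with the constant $M=0$, together with a density/continuity argument for the second assertion.

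For the first assertion, I would simply observe that the hypothesis $\|\J_{H_0}\varphi(x)\|_{\mathcal{L}(H_0;Y)}=0$ for all $x\in X$ means that $\varphi$ satisfies the assumption of Proposition \ref{Lip} with $M=0$. Applying that proposition directly yields, for every $x\in X$ and $h\in H_0$,
\[
\|\varphi(x+h)-\varphi(x)\|_Y\leq 0\cdot\|h\|_{H_0}=0,
\]
so $\varphi(x+h)=\varphi(x)$.

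For the second assertion, I would fix an arbitrary $x\in X$ and exploit the density of $H_0$ in $X$ to choose a sequence $(h_n)_{n\in\N}\subset H_0$ such that $h_n\to x$ in $X$. Applying the first part at the base point $0\in X$, I get $\varphi(h_n)=\varphi(0)$ for every $n\in\N$. Since $\varphi$ is continuous, letting $n\to+\infty$ gives $\varphi(x)=\varphi(0)$, whence $\varphi$ is constant.

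There is essentially no obstacle here: the first assertion is the $M=0$ specialization of the preceding proposition, and the second is a one-line density argument. The only point worth double-checking is that the sequence $(h_n)$ exists in $X$ with the required convergence, which is exactly the meaning of $H_0$ being dense in $X$ (together with the continuous embedding \eqref{immercontinua}, although density is already phrased with respect to the norm of $X$).
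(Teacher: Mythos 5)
Your proposal is correct and follows essentially the same route as the paper: the first assertion is obtained by applying Proposition \ref{Lip} with $M=0$, and the second by choosing a sequence $(h_n)_{n\in\N}\subseteq H_0$ converging to an arbitrary point and using the continuity of $\varphi$ together with $\varphi(h_n)=\varphi(0+h_n)=\varphi(0)$. No gaps.
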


\begin{proof}
By Proposition \ref{Lip} we get that $\varphi(x+h)=\varphi(x)$, for every $x\in X$ and $h\in H_0$. Now assume that $H_0$ is dense in $X$ and that $\varphi$ is a continuous function. Let $x_0\in X$ and let $(h_n)_{n\in\N}\subseteq H_0$ be a sequence converging to $x_0$ in $X$. By the continuity of $\varphi$ and the first part of the proof of the proposition it holds
\begin{align*}
\varphi(x_0)=\lim_{n\ra+\infty}\varphi(h_n)=\lim_{n\ra+\infty}\varphi(0+h_n)=\varphi(0).
\end{align*} 
This conclude the proof.
\end{proof}


The following result clarifies the relationship between the classical notion of Fr\'echet differentiability and the notion of $H_0$-Fr\'echet differentiability.
\begin{proposition}\label{dalpha}
Let $\varphi:X\ra Y$ be a Fr\'echet differentiable function. $\varphi$ is $H_0$-Fr\'echet differentiable and for any $x\in X$ and $h\in H_0$ it holds $\J_{H_0}\varphi(x)h=\J \varphi(x)h$.
\end{proposition}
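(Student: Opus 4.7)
The plan is to use the continuous embedding $H_0 \hookrightarrow X$ from \eqref{immercontinua} in two places: to check that the restriction of the Fr\'echet derivative $\J\varphi(x)$ to $H_0$ defines a bounded linear map from $H_0$ to $Y$, and then to transfer the Fr\'echet limit (taken as $\|h\|_X \to 0$) into the corresponding $H_0$-limit.

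First, I fix $x \in X$ and define the candidate $L_x : H_0 \to Y$ by $L_x h := \J\varphi(x)h$. Linearity is inherited from $\J\varphi(x) \in \mathcal{L}(X;Y)$, and for any $h \in H_0$,
\[
\|L_x h\|_Y = \|\J\varphi(x)h\|_Y \leq \|\J\varphi(x)\|_{\mathcal{L}(X;Y)}\|h\|_X \leq C\,\|\J\varphi(x)\|_{\mathcal{L}(X;Y)}\|h\|_{H_0},
\]
so $L_x \in \mathcal{L}(H_0;Y)$.

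Next, I verify the defining limit of Definition \ref{H-frechet}. For $h \in H_0\setminus\{0\}$, the bound \eqref{immercontinua} yields $\|h\|_X \leq C \|h\|_{H_0}$, hence $\|h\|_X \to 0$ whenever $\|h\|_{H_0} \to 0$, and moreover $\|h\|_X/\|h\|_{H_0} \leq C$. Therefore
\[
\frac{\|\varphi(x+h)-\varphi(x)-L_x h\|_Y}{\|h\|_{H_0}} = \frac{\|\varphi(x+h)-\varphi(x)-\J\varphi(x)h\|_Y}{\|h\|_X}\cdot\frac{\|h\|_X}{\|h\|_{H_0}} \leq C\cdot\frac{\|\varphi(x+h)-\varphi(x)-\J\varphi(x)h\|_Y}{\|h\|_X},
\]
(with the obvious convention that the quotient is $0$ if $\|h\|_X = 0$, which forces $h=0$ and makes the numerator vanish). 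The right-hand side tends to $0$ by the Fr\'echet differentiability of $\varphi$ at $x$, so the left-hand side tends to $0$ as $\|h\|_{H_0}\to 0$. This shows that $\varphi$ is $H_0$-Fr\'echet differentiable at $x$, and by the uniqueness statement in Definition \ref{H-frechet} we conclude $\J_{H_0}\varphi(x)h = L_x h = \J\varphi(x)h$ for every $h \in H_0$.

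There is essentially no obstacle here; the entire content is packaged into the continuous embedding inequality \eqref{immercontinua}, which simultaneously supplies continuity of $L_x$ and lets one dominate the $H_0$-quotient by the $X$-quotient up to a constant.
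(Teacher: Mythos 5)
Your proof is correct and follows essentially the same route as the paper's: both exploit the continuous embedding \eqref{immercontinua} to write the $H_0$-difference quotient as the $X$-difference quotient times the factor $\|h\|_X/\|h\|_{H_0}\leq C$, so that Fr\'echet differentiability in $X$ forces the $H_0$-limit to vanish. The only (harmless) addition is your explicit check that the restriction $L_x=\J\varphi(x)|_{H_0}$ lies in $\mathcal{L}(H_0;Y)$, which the paper leaves implicit.
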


\begin{proof}
By the Fr\'echet differentiability of $\varphi$ we know that for every $\eta>0$ there exists $\delta>0$ such that for every $y\in X$ with $0<\|y\|<\delta$ it holds
\begin{align}\label{Shandalar}
\frac{\|\varphi(x+y)-\varphi(x)- \J\varphi(x)y\|_Y}{\|y\|_X}<\eta.
\end{align}
Fix $\eps>0$, let $\eta=\eps/C$ in \eqref{Shandalar}, where $C$ is the constant appearing in \eqref{immercontinua}, and consider $h\in H_0$ such that $0<\|h\|_{H_0}<\delta/C$ where $\delta>0$ is the one introduced at the beginning of the proof. Observe that by \eqref{immercontinua} it holds that $0<\|h\|_X\leq C\|h\|_{H_0}<\delta$. By \eqref{immercontinua} and \eqref{Shandalar}, it holds
\begin{align*}
0\leq \frac{\|\varphi(x+h)-\varphi(x)-\J\varphi(x)h\|_Y}{\|h\|_{H_0}} &=\frac{\|\varphi(x+h)-\varphi(x)-\J\varphi(x)h\|_Y}{\|h\|_X}\frac{\|h\|_X}{\|h\|_{H_0}}\\
&\leq C\frac{\|\varphi(x+h)-\varphi(x)-\J\varphi(x)h\|_Y}{\|h\|_X}<\eps.
\end{align*}
This concludes the proof.
\end{proof}

The converse implication of Proposition \ref{dalpha} is not true in general, as shown by the following example.
\begin{example}
Let $\varphi:X\ra \R$ be defined as
\begin{align*}
\varphi(x):=\eqsys{
\|x\|^2_{H_0}, & x\in H_0;\\
0, &\text{otherwise.}}
\end{align*}
$\varphi$ is not Fr\'echet differentiable (it is not continuous), but it is $H_0$-Fr\'echet differentiable and it holds
\begin{align*}
\J_{H_0}\varphi(x)h=\eqsys{
2\langle x,h\rangle_{H_0}, & x\in H_0;\\
0, & \text{otherwise.}}
\end{align*}
\end{example}

\begin{remark}
One of the most significant frameworks in which the Gross differentiability is applied are abstract Wiener spaces. Let $X$ be a separable Banach space, and let $\gamma$ be a Gaussian measure on the Borel $\sigma$-algebra of $X$. We denote by $H_\gamma$ the Cameron--Martin space associated to $\gamma$ 
(see \cite{BOGIE1,Lunardi}). In this case, we consider Gross differentiability along the Cameron--Martin space $H_\gamma$, namely $H_0 = H_\gamma$ in Definition \ref{H-frechet}. This differentiability is related to the integration by parts formula with respect to $\gamma$ and lays the ground for the theory of infinite-dimensional Ornstein--Uhlenbeck semigroups (see, for example, \cite{BOG18,CMG96,DaPratoZab_Second,DaPratoZab_Stoch,LMP20,LP20}). 
In the most important example of Wiener space $X = C([0,1])$, the space of real-valued continuous functions on $[0,1]$, $\gamma$ is the Wiener measure, and the Cameron--Martin space $H_\gamma$ consists of real-valued functions $f$ defined on $[0,1]$ such that $f$ is absolutely continuous, $f'\in L^2((0,1),d\lambda)$ (here $d\lambda$ is the Lebesgue measure on $(0,1)$), and $f(0)=0$ (see \cite{BOGIE1,Lunardi}).
\end{remark}


\subsubsection{Gross differentiability for real-valued functions}\label{Gross_real}

In this subsection we rewrite Definitions \ref{H-frechet} and \ref{H-Gateaux} for functions from a Hilbert space $H$ (with inner product $\langle\cdot,\cdot,\rangle_H$) with values in $\mathbb{R}$: we will focus of this case from here on. Let $k\in \N$ and $L\in\mathcal{L}^{(k)}(H;\R)$, by the Riesz representation theorem there exists a unique $l\in \mathcal{L}^{(k-1)}(H)$ such that 
\[
L(h_1,\ldots,h_n)=\scal{l(h_1,\ldots,h_{n-1})}{h_n}_H,\qquad h_1,\ldots,h_n\in H.
\]

\begin{definition}\label{gradienti}
Let $k\in\N$ and let $f:H\ra \R$ be a $k$-times
\begin{enumerate}[\rm (i)]
\item $H_0$-Fr\'echet differentiable function, then for any $x\in H$ we denote by $\nabla_{H_0}^kf(x)$ the unique element of $\mathcal{L}^{(k-1)}(H_0)$ such that for any $h_1,\ldots,h_k\in H_0$
\[
\J^k_{H_0} f(x)(h_1,\ldots,h_k)=\langle\nabla^k_{H_0}f(x)(h_1,\ldots,h_{k-1}),h_n \rangle_{H_0}.
\]
If $k=1$ we write $\nabla_{H_0}f(x)$ and we call it $H_0$-gradient of $f$ at $x\in H$. If $H=H_0$ we write $\nabla^kf(x)$.

\item $H_0$-Gateaux differentiable function, then for any $x\in H$ we denote by $\nabla_{G,H_0}^kf(x)$ the unique element of $\mathcal{L}^{k-1}(H_0)$ such that for any $h_1,\ldots,h_k\in H_0$
\[
\J^k_{G,H_0} f(x)(h_1,\ldots,h_k)=\langle\nabla^k_{G,H_0}f(x)(h_1,\ldots,h_{k-1}),h_n\rangle_{H_0}.
\]
If $k=1$ we write $\nabla_{G,H_0}f(x)$ and we call it $H_0$-gradient of $f$ at $x\in H$. If $H=H_0$ we write $\nabla_G^kf(x)$.
\end{enumerate}
\end{definition}

Notice that $\nabla f$ and $\nabla_G f$ are the standard Fr\'echet and Gateaux gradient of $f$ in $x\in H$, respectively.
Now we introduce some natural functional spaces associated to the notion of $H_0$-differentiability.
\begin{definition}\label{spzHr}
Let $k\in\N$. We denote by ${\rm BUC}^k_{H_0}(H)$ the subspace of ${\rm BUC}^k(H)$ of $k$-times $H_0$-Fr\'echet differentiable functions $f:H\ra \R$ such that the functions $x\mapsto\nabla^i_{H_0}f(x)$ belong to ${\rm BUC}(H;\mathcal{L}^{(i-1)}(H_0))$, for every $i=1,\ldots,k$. If $H=H_0$ we write ${\rm BUC}^k(H)$. 
\end{definition}
For any $k \in \mathbb{N}$, the space ${\rm BUC}^k_{H_0}(H)$ is a Banach space if endowed with the norm
\begin{equation*}
\norm{f}_{{\rm BUC}^k_{H_0}(H)}:=\norm{f}_{\infty}+\sum_{i=1}^k\sup_{x\in H}\|\nabla^i_{H_0}f(x)\|_{\mathcal{L}^{(i-1)}(H_0)}.
\end{equation*}
 
We conclude this subsection noting that, for real-valued functions  Theorem \ref{Gateaux-Frechet} reads as follows.
\begin{theorem}\label{Gateaux-FrechetGrad}
Let $\varphi:H\ra \R$ be a $H_0$-continuous function. If $\varphi$ is $H_0$-Gateaux differentiable and $\nabla_{G,H_0}\varphi:H\ra H$ is $H_0$-continuous, then $\varphi$ is $H_0$-Fr\'echet differentiable and $\nabla_{H_0}\varphi=\nabla_{G,H_0}\varphi$.
\end{theorem}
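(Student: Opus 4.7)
The plan is to deduce this real-valued result directly from Theorem \ref{Gateaux-Frechet} applied with $X=H$ and $Y=\mathbb{R}$, using the Riesz representation theorem to pass between the derivative (viewed as an element of $\mathcal{L}(H_0;\mathbb{R})$) and the gradient (viewed as an element of $H_0 \subseteq H$). Since $H_0$ is a Hilbert space, the Riesz map $\iota:H_0 \to \mathcal{L}(H_0;\mathbb{R})$ defined by $\iota(v)(h)=\langle v,h\rangle_{H_0}$ is an isometric isomorphism; in particular, for every $x \in H$ the $H_0$-Gateaux gradient and the $H_0$-Gateaux derivative satisfy $\mathcal{D}_{G,H_0}\varphi(x) = \iota(\nabla_{G,H_0}\varphi(x))$, by Definition \ref{gradienti}(ii).

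First, I would check that the hypotheses of Theorem \ref{Gateaux-Frechet} are met. The $H_0$-continuity of $\varphi$ and the $H_0$-Gateaux differentiability are assumed. For the remaining hypothesis, one needs $\mathcal{D}_{G,H_0}\varphi:H\to \mathcal{L}(H_0;\mathbb{R})$ to be $H_0$-continuous. Since $\iota$ is a linear isometry, for every $x,y \in H$
\begin{equation*}
\|\mathcal{D}_{G,H_0}\varphi(x+y) - \mathcal{D}_{G,H_0}\varphi(x)\|_{\mathcal{L}(H_0;\mathbb{R})} = \|\nabla_{G,H_0}\varphi(x+y) - \nabla_{G,H_0}\varphi(x)\|_{H_0},
\end{equation*}
so the $H_0$-continuity of $\nabla_{G,H_0}\varphi$ (with target either $H_0$ or, as in the statement, $H$ via the continuous embedding) transfers to the $H_0$-continuity of $\mathcal{D}_{G,H_0}\varphi$. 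Strictly speaking, if the codomain is $H$ rather than $H_0$, then the $H_0$-continuity of $\nabla_{G,H_0}\varphi$ as an $H$-valued map a priori only yields $\|\cdot\|_H$-continuity; however, since $\nabla_{G,H_0}\varphi$ takes values in $H_0$ by construction, one should clarify that the working hypothesis is $H_0$-continuity with values in $H_0$, which is the natural setting.

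Having verified the assumptions, Theorem \ref{Gateaux-Frechet} yields that $\varphi$ is $H_0$-Fr\'echet differentiable and $\mathcal{D}_{H_0}\varphi(x) = \mathcal{D}_{G,H_0}\varphi(x)$ for every $x \in H$. Applying the Riesz isomorphism $\iota^{-1}$ on both sides gives $\nabla_{H_0}\varphi(x) = \nabla_{G,H_0}\varphi(x)$ in $H_0$, which is the claimed identity.

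I do not anticipate a serious obstacle: the result is essentially a restatement of Theorem \ref{Gateaux-Frechet} in the scalar-valued case, and the only non-cosmetic point is the bookkeeping required to identify $\mathcal{L}(H_0;\mathbb{R})$ with $H_0$ via Riesz and to reconcile the notational discrepancy that $\nabla_{G,H_0}\varphi$ is stated with codomain $H$ while, by Definition \ref{gradienti}(ii), it naturally takes values in $H_0$.
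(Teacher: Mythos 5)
Your proposal is correct and matches the paper's approach: the paper gives no separate proof, introducing this theorem as the real-valued restatement of Theorem \ref{Gateaux-Frechet}, which is exactly the reduction you carry out, with the Riesz identification of $\mathcal{L}(H_0;\mathbb{R})$ with $H_0$ made explicit. Your remark that the hypothesis should be read as $H_0$-continuity of $\nabla_{G,H_0}\varphi$ with values in $H_0$ (consistent with Definition \ref{spzHr}) is the right interpretation of the statement's codomain.
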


\subsection{Differentiability in the sense of Cannarsa and Da Prato}\label{BeppeDif}
We introduce here the notion of $R$-differentiability considered by P. Cannarsa and G. Da Prato in \cite{CDP96} dropping the assumption of injectivity of the operator $R$ considered in that paper.
In a separable Hilbert space $H$ (with inner product $\langle\cdot,\cdot,\rangle_H$ and associated norm $\norm{\cdot}_H$), we fix a self-adjoint operator $R\in\mathcal{L}(H)$. We denote by $\ker R$ the kernel of $R$ and by $(\ker R)^{\bot}$ its orthogonal subspace in $H$. By $P_{\ker R}$ we denote the orthogonal projection on $\ker R$.

We denote by $H_R:=R(H)$ the range of the operator $R$. In order to provide $H_R$ with a Hilbert structure, we recall that the restriction $R_{|_{(\ker R)^{\bot}}}:(\ker R)^{\bot}\rightarrow H_R$ is a bijective operator. Hence, we can define the pseudo-inverse of $R$ as 
\begin{align}\label{Chiamo}
R^{-1}:=(R_{|_{(\ker R)^{\bot}}})^{-1} \in \mathcal{L}(H_R,(\ker R)^{\perp}),
\end{align} 
see \cite[Appendix C]{LI-RO1}.
We introduce the scalar product 
\begin{equation}\label{Rprod}
\scal{x}{y}_{H_R}:=\langle R^{-1}x,R^{-1}y\rangle_H,\quad x,y\in H_R
\end{equation}
with its associated norm $\norm{x}_{H_R}:=\|R^{-1}x\|_H$. Endowed with this inner product $H_R$ is a separable Hilbert space and a Borel subset of $H$ (see \cite[Theorem 15.1]{KE1}). A possible orthonormal basis of $H_R$ is given by $\{Re_k\}_{k\in\N}$, where $\{e_k\}_{k\in\N}$ is an orthonormal basis of $(\ker R)^{\bot}$. We recall that it holds
\begin{align}
RR^{-1} &=\Id_{H_R},&\!\!\!\!\!\!\!\!\!\!\!\!\!\!\!\!\!\!\!\!\!\!\!\!\!\!\!\!\!\!\!\!\!\!\!\!\!\! R^{-1}R&=\Id_{H}-P_{\ker R},\label{orietta1}\\ 
R^{-1}R_{|_{(\ker R)^{\perp}}}&=\Id_{(\ker R)^{\perp}},&\!\!\!\!\!\!\!\!\!\!\!\!\!\!\!\!\!\!\!\!\!\!\!\!\!\!\!\!\!\!\!\!\!\!\!\!\!\! RR^{-1}&=\Id_H.\label{orietta2}
\end{align}
Notice that for any $x\in H_R$ it holds
\[
\norm{x}_H=\|RR^{-1}x\|_H\leq \norm{R}_{\mathcal{L}(H)}\|R^{-1}x\|_{H}\leq \norm{R}_{\mathcal{L}(H)}\norm{x}_{H_R}.
\]
Thus, when $H_0=H_R$ the constant $C$ appearing in \eqref{immercontinua} is given by $\norm{R}_{\mathcal{L}(H)}$. 
Moreover we recall that $\ker R=\{0\}$ if, and only if, $R(H)$ is dense in $H$ (see \cite[Lemma VI.2.8]{DS88I}).

%

\begin{definition}\label{C-D_diff}
We say that a function $f:H\ra\R$ is $R$-differentiable at $x\in H$ if there exists $l_x\in H$ such that for any $v\in H$ it holds 
\begin{equation}\label{C-D_1}
\lim_{s\ra 0}\abs{\frac{f(x+sRv)-f(x)}{s}-\langle l_x,v\rangle_H}=0.
\end{equation}
We set $\nabla_Rf(x):=l_x$. We say that a function is $R$-differentiable if it is $R$-differentiable at any $x\in H$.
We say that $f$ is twice $R$-differentiable at $x\in H$ if it is $R$-differentiable and there exists a unique $B_x\in \mathcal{L}(H)$ such that for any $v\in H$ we have
\begin{equation}\label{C-D_2}
\lim_{s\ra 0}\norm{\frac{\nabla_R f(x+sRv)-\nabla_Rf(x)}{s}-B_xv}_{H}=0.
\end{equation}
We set $\nabla^2_R f(x):=B_x$. Let $k\in\N$; similarly one introduces the notion of $k$-times $R$-dif\-fer\-en\-tia\-bil\-i\-ty at $x\in H$. We denote by $\nabla^k_R f(x)\in\mathcal{L}^{(k-1)}(H)$ the $k$-order $R$-derivative of $\varphi$.
We say that a function is $k$-times $R$-differentiable when it is $k$-times $R$-differentiable at any $x\in H$.
\end{definition}

\begin{remark}
In \cite{CDP96} the authors introduce a weaker notion of twice $R$-differentiability. More precisely, a function $\varphi:H\ra\R$ is twice $R$-differentiable if, for any $x\in H$, there exists a unique $B_x\in \mathcal{L}(H)$ such that for any $w,v\in H$ it holds 
\[
\lim_{s\ra 0}\scal{\frac{\nabla_R\varphi(x+sRv)-\nabla_R\varphi(x)}{s}-B_xv}{w}_H=0.
\]
\end{remark}

We introduce some natural functional spaces associated to the notion of $R$-differentiability.

\begin{definition}\label{spazC-D}
For any $k\in\N$, we denote by ${\rm BUC}^k_{R}(H)$ the subspace of ${\rm BUC}^k(H)$ of $k$-times $R$-Fr\'echet differentiable functions $\varphi:H\ra \R$ such that the mapping $x\mapsto\nabla^i_R\varphi(x)$ belongs to ${\rm BUC}(H;\mathcal{L}^{(i-1)}(H))$, for every $i=1,\ldots,k$. %
%
\end{definition}
The space  ${\rm BUC}^k_{R}(H)$ equipped with the norm
\[
\norm{\varphi}_{{\rm BUC}^k_{R}(H)}:=\norm{\varphi}_{\infty}+\sum_{i=1}^k\sup_{x\in H}\|\nabla_{R}^i\varphi(x)\|_{\mathcal{L}^{(i-1)}(H)}
\]
is a Banach space. The following result will be useful throughout the paper. Notice that if $\ker R=\{0\}$ (as in \cite{CDP96}) the next proposition is trivial.

\begin{proposition}\label{ortogonaleR}
Let $k\in\N$ and let $f:H\ra\R$ be a $k$-times $R$-differentiable function. For any $x\in H$
\begin{align}\label{O1}
\nabla^k_Rf(x)\in \mathcal{L}^{(k-1)}(H;(\ker R)^{\bot}),
\end{align}
where we set $\mathcal{L}^{(0)}(H;(\ker R)^{\bot}):=(\ker R)^{\bot}$. In other words, for any $v\in \ker R$
\begin{align*}
\scal{\nabla_Rf(x)}{v}_H=0,
\end{align*}
and if $k\geq 2$ 
\begin{align}\label{O2}
\nabla^k_Rf(x)(v_1,\ldots,v_{k-1})=0,
\end{align}
whether $v_i\in \ker R $ for $i=1,\ldots,k-1$.
\end{proposition}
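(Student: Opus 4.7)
The plan is to proceed by induction on $k$, exploiting the trivial but crucial observation that $Rv=0$ whenever $v\in\ker R$, so that any perturbation of $x$ in the Cannarsa--Da Prato sense along a direction $v\in\ker R$ is no perturbation at all.

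For the base case $k=1$, I would fix $x\in H$ and pick $v\in\ker R$. Since $Rv=0$, the incremental ratio $[f(x+sRv)-f(x)]/s$ is identically zero, so \eqref{C-D_1} forces $\scal{\nabla_Rf(x)}{v}_H=0$. This gives $\nabla_Rf(x)\in(\ker R)^\perp$, which is exactly \eqref{O1} at level $k=1$.

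For the inductive step, assuming \eqref{O1} and \eqref{O2} at the $(k-1)$-th level, I would use the natural vector-valued extension of Definition \ref{C-D_diff}: $\nabla^k_R f(x)\in\mathcal{L}^{(k-1)}(H)$ is characterized as the unique element such that
\[
\lim_{s\to 0}\left\|\frac{\nabla^{k-1}_Rf(x+sRv)-\nabla^{k-1}_Rf(x)}{s}-\nabla^k_Rf(x)(v,\cdot,\ldots,\cdot)\right\|_{\mathcal{L}^{(k-2)}(H)}=0, \qquad v\in H.
\]
Two observations then close the argument. First, if the differentiation direction $v$ lies in $\ker R$, then $Rv=0$ and the difference quotient vanishes identically, forcing $\nabla^k_Rf(x)(v,\cdot,\ldots,\cdot)=0$ and handling the first-slot case of \eqref{O2}. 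Second, by the inductive hypothesis the subspace
\[
\mathcal{V}:=\set{T\in\mathcal{L}^{(k-2)}(H;(\ker R)^\perp)\tc T(w_1,\ldots,w_{k-2})=0\text{ if some }w_i\in\ker R}
\]
is closed in $\mathcal{L}^{(k-2)}(H)$ and contains every difference quotient above; hence it contains the limit $\nabla^k_Rf(x)(v,\cdot,\ldots,\cdot)$, and evaluating at arbitrary $w_1,\ldots,w_{k-2}\in H$ simultaneously yields \eqref{O1} and the remaining instances of \eqref{O2}.

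I do not expect a genuine obstacle: the argument is essentially topological, resting only on closedness of $\mathcal{V}$ inside $\mathcal{L}^{(k-2)}(H)$ and on the vanishing of the perturbation when $v\in\ker R$. The only point that deserves a little care is notational, since Definition \ref{C-D_diff} spells out $R$-differentiability explicitly only for scalar functions and for $k\in\{1,2\}$; one must therefore first fix the obvious vector-valued and higher-order analogue before running the induction.
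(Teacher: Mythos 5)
Your proof is correct and follows essentially the same route as the paper's: induction on $k$, with the base case coming from the identical vanishing of the difference quotient when $Rv=0$ for $v\in\ker R$, and the inductive step combining that observation with the fact that the limit inherits (by closedness) the properties that the inductive hypothesis gives to the difference quotients $s^{-1}\bigl(\nabla^{k-1}_Rf(x+sRv)-\nabla^{k-1}_Rf(x)\bigr)$. The only cosmetic difference is that you place the new differentiation direction in the first slot of $\nabla^k_Rf(x)$, whereas the convention implicit in Definition \ref{C-D_diff} (and used in the paper's proof) places it in the last slot.
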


\begin{proof}
The case $k\leq 2$ is an immediate consequence of \eqref{C-D_1} and \eqref{C-D_2} by taking $v\in \ker R$. For $k>2$ we proceed by induction. Assume the assertion to hold true for $k$ and let us prove it for $k+1$.
Let $f:H\ra\R$ be a $(k+1)$-times $R$-differentiable function. 
By the inductive hypothesis and Definition \ref{C-D_diff} we infer that
\begin{align*}
\lim_{s\ra 0}\norm{\frac{\nabla^k_R f(x+sRv_k)-\nabla^k_Rf(x)}{s}- \nabla^{k+1}_Rf(x)(\cdot,\ldots,\cdot,v_{k})}_{\mathcal{L}^{(k)}(H;(\ker R)^{\bot})}=0,
\end{align*}
hence \eqref{O1} holds true and \eqref{O2} holds true when $v_n\in\ker R$. Moreover for any $v_1,\ldots,v_{n}\in H$ we have
\begin{align*}
\lim_{s\ra 0}&\bigg\|\frac{\nabla^k_R f(x+sRv_k)(v_1,\ldots,v_{k-1})-\nabla^k_Rf(x)(v_1,\ldots,v_{k-1})}{s}-\nabla^{k+1}_Rf(x)(v_1,\ldots,v_{k})\bigg\|_{H}=0,
\end{align*}
thus the  inductive hypothesis yields \eqref{O2}. 
\end{proof}

\subsection{Comparisons between $R$-differentiability and $H_R$-dif\-fer\-en\-tiability}\label{RvsH}

We aim to compare the notion of $R$-differentiability of Section \ref{BeppeDif} with the notion of $H_0$-differentiability of Section \ref{Gross_real}, if $H_0=H_R$.




\begin{proposition}\label{ugua}
A function $\varphi:H\ra\R$ is $R$-differentiable if and only if it is $H_R$-Gateaux differentiable. Moreover, for any $x\in H$ 
\begin{align}
\scal{R\nabla_R\varphi(x)}{h}_{H_R}&=\scal{\nabla_{G,H_R}\varphi(x)}{h}_{H_R}, &h\in H_R\notag\\
\scal{\nabla_{R}\varphi(x)}{v}_{H}&=\langle R^{-1}\nabla_{G,H_R}\varphi(x),v\rangle_{H}, &v\in H\label{uguaH}.
\end{align}
In particular, for any $x\in H$ it holds $\|\nabla_R\varphi(x)\|_H=\|\nabla_{G,H_R}\varphi(x)\|_{H_R}$.

\end{proposition}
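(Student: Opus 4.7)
The plan is to prove both implications by rewriting the defining limits in terms of each other via the correspondence $h \leftrightarrow v = R^{-1}h$ between $H_R$ and $(\ker R)^{\perp}$, and identifying the Riesz representer of the $H_R$-Gateaux derivative. The crucial ingredient will be Proposition \ref{ortogonaleR}, which guarantees that $\nabla_R \varphi(x) \in (\ker R)^{\perp}$; this is what resolves the non-injectivity of $R$.

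First, I would fix the notation: by Definition \ref{H-Gateaux} and the Riesz representation theorem applied to the Gateaux derivative $\J_{G,H_R}\varphi(x) \in \mathcal{L}(H_R;\R) = H_R^{*}$, the $H_R$-gradient $\nabla_{G,H_R}\varphi(x) \in H_R$ satisfies $\J_{G,H_R}\varphi(x)h = \langle \nabla_{G,H_R}\varphi(x), h\rangle_{H_R}$ for all $h \in H_R$.

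For the $(\Rightarrow)$ implication, assume $\varphi$ is $R$-differentiable and set $l_x := \nabla_R\varphi(x)$. For any $h \in H_R$, write $v := R^{-1}h \in (\ker R)^{\perp}$, so that $Rv = RR^{-1}h = h$ by \eqref{orietta2}. Applying the $R$-differentiability limit \eqref{C-D_1} with this $v$ yields
\begin{equation*}
\lim_{s\to 0}\left|\frac{\varphi(x+sh)-\varphi(x)}{s}-\langle l_x, R^{-1}h\rangle_H\right|=0.
\end{equation*}
It remains to rewrite $\langle l_x, R^{-1}h\rangle_H$ as an $H_R$-inner product. The candidate representer is $Rl_x \in H_R$: by Proposition \ref{ortogonaleR}, $l_x \in (\ker R)^{\perp}$, so $R^{-1}Rl_x = (\Id_H - P_{\ker R})l_x = l_x$ via \eqref{orietta1}, giving
\begin{equation*}
\langle Rl_x, h\rangle_{H_R} = \langle R^{-1}Rl_x, R^{-1}h\rangle_H = \langle l_x, R^{-1}h\rangle_H.
\end{equation*}
Hence $\varphi$ is $H_R$-Gateaux differentiable with $\nabla_{G,H_R}\varphi(x) = R\,\nabla_R\varphi(x)$, yielding the first identity.

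For the $(\Leftarrow)$ implication, assume $\varphi$ is $H_R$-Gateaux differentiable and set $m_x := \nabla_{G,H_R}\varphi(x)$. For an arbitrary $v \in H$, the vector $h := Rv$ lies in $H_R$, so
\begin{equation*}
\lim_{s\to 0}\left|\frac{\varphi(x+sRv)-\varphi(x)}{s} - \langle m_x, Rv\rangle_{H_R}\right| = 0.
\end{equation*}
Using \eqref{Rprod} and \eqref{orietta1},
\begin{equation*}
\langle m_x, Rv\rangle_{H_R} = \langle R^{-1}m_x, R^{-1}Rv\rangle_H = \langle R^{-1}m_x, v - P_{\ker R}v\rangle_H = \langle R^{-1}m_x, v\rangle_H,
\end{equation*}
where the last equality uses $R^{-1}m_x \in (\ker R)^{\perp}$. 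Thus \eqref{C-D_1} holds with $l_x = R^{-1}m_x = R^{-1}\nabla_{G,H_R}\varphi(x)$, which proves \eqref{uguaH} and (since it holds for every $v \in H$) identifies $\nabla_R\varphi(x)$ uniquely. Finally, the norm equality follows directly: $\|\nabla_{G,H_R}\varphi(x)\|_{H_R} = \|R^{-1}\nabla_{G,H_R}\varphi(x)\|_H = \|\nabla_R\varphi(x)\|_H$ by definition of $\|\cdot\|_{H_R}$. The only subtle point, which I expect to be the main (but not terribly deep) obstacle, is carefully tracking where $(\ker R)^{\perp}$ enters — both via Proposition \ref{ortogonaleR} for $l_x$ and via the range of $R^{-1}$ — in order to freely cancel $P_{\ker R}$ terms.
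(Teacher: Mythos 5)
Your proof is correct and follows essentially the same route as the paper's: both directions hinge on the change of variables $h=Rv$, on Proposition \ref{ortogonaleR} to place $\nabla_R\varphi(x)$ in $(\ker R)^{\perp}$, and on $R^{-1}$ taking values in $(\ker R)^{\perp}$ so that the $P_{\ker R}$ terms drop out of the inner products. The only (cosmetic) difference is that in the forward direction you specialize to the canonical representative $v=R^{-1}h$, whereas the paper argues with an arbitrary $v$ satisfying $h=Rv$.
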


\begin{proof}
Assume that $\varphi$ is $R$-differentiable. By \eqref{Rprod}, Definition \ref{C-D_diff} and Proposition \ref{ortogonaleR}, for every $x\in H$, $h\in H_R$ of the form $h=Rv$ it holds
\begin{align*}
\lim_{s\ra 0}\bigg|\frac{\varphi(x+sh)-\varphi(x)}{s}&-\langle R\nabla_R\varphi(x),h\rangle_{H_R}\bigg|=\lim_{s\ra 0}\abs{\frac{\varphi(x+sRv)-\varphi(x)}{s}-\langle \nabla_R\varphi(x),v\rangle_{H}}=0.
\end{align*}
Since $R\nabla_R \varphi(x)\in H_R$, the mappings $h\mapsto\langle R\nabla_R\varphi(x),h\rangle_{H_R}$ belongs to $H_R^*$, so $\varphi$ is $H_R$-Gateaux differentiable and 
\(
\scal{\nabla_{G,H_R}\varphi(x)}{h}_{H_R}=\langle R\nabla_R\varphi(x),h\rangle_{H_R}.
\)
Assume now that $\varphi$ is $H_R$-Gateaux differentiable. Recalling that $h=Rv$, by Definitions \ref{H-Gateaux} and \ref{gradienti}, and \eqref{orietta1} for every $x\in H$, $v\in H$ it holds
\begin{align*}
&\lim_{s\ra 0}\abs{\frac{\varphi(x+sRv)-\varphi(x)}{s}-\langle R^{-1}\nabla_{G,H_R}\varphi(x),v\rangle_{H}}\\
&=\lim_{s\ra 0}\abs{\frac{\varphi(x+sh)-\varphi(x)}{s}-\langle \nabla_{G,H_R}\varphi(x),h\rangle_{H_R}-\langle R^{-1}\nabla_{G,H_R}\varphi(x),P_{\ker R}v\rangle_{H}}.
\end{align*}
Since, by \eqref{Chiamo}, $R^{-1}\nabla_{G,H_R}\varphi(x)\in (\ker R)^{\bot}$ and $h=Rv$, by \eqref{Rprod} we obtain
\begin{align*}
\lim_{s\ra 0}\abs{\frac{\varphi(x+sRv)-\varphi(x)}{s}-\langle R^{-1}\nabla_{G,H_R}\varphi(x),v\rangle_{H}}=0.
\end{align*}
Hence  $\varphi$ is $R$-differentiable and \eqref{uguaH} is verified. 
\end{proof}

Bearing in mind Definitions \ref{spzHr} and \ref{spazC-D}, we now show that ${\rm BUC}^k_{H_R}(H)={\rm BUC}^k_R(H)$ for any $k\in\N$. We need the following preliminary result.

%

\begin{lemma}\label{iso}
For any $n\in\N$ 
the mapping $T_{n}:\mathcal{L}^{(n)}((\ker R)^{\bot})\ra \mathcal{L}^{(n)}(H_R)$ defined for $v_1,\ldots,v_{n}\in H_R$ and $A\in \mathcal{L}^{(n)}((\ker R)^{\bot})$ as
\[
(T_{n}A)(v_1,\ldots,v_{n}):=RA(R^{-1}v_1,\ldots,R^{-1}v_n),
\]
is a linear isometry and an isomorphism. We recall that for $n=0$ we let $\mathcal{L}^{(0)}((\ker R)^{\bot}):=(\ker R)^{\bot}$ and $\mathcal{L}^{(0)}(H_R):=H_R$ and we set 
\(T_{0}v:=Rv,\) 
for any $v\in (\ker R)^{\bot}$.
Furthermore if $A\in \mathcal{L}^{(n)}((\ker R)^{\bot})$ and $v\in H_R$ it holds
\begin{align*}
T_{n-1}(A(\cdot,\ldots,\cdot,R^{-1}v))=(T_{n}A)(\cdot,\ldots,\cdot,v).
\end{align*}
\end{lemma}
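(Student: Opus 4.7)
The plan is to handle the three assertions (well-definedness plus linearity, isometry, and bijectivity) in turn, and then the compatibility identity at the end. The whole argument rests on two facts used repeatedly: the range of every $A\in\mathcal{L}^{(n)}((\ker R)^{\bot})$ lies inside $(\ker R)^{\bot}$, and by \eqref{orietta1}--\eqref{orietta2} the operator $R^{-1}R$ restricts to the identity on $(\ker R)^{\bot}$, while $RR^{-1}=\Id_{H_R}$ on $H_R$.

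First I would check that $T_nA$ is a continuous $n$-linear map $H_R^n\to H_R$. Multilinearity and continuity follow from those of $A$, $R$ and $R^{-1}$; the linearity of $A\mapsto T_nA$ is immediate. For the isometry I would write
\[
\norm{(T_nA)(v_1,\ldots,v_n)}_{H_R}=\norm{R^{-1}R\,A(R^{-1}v_1,\ldots,R^{-1}v_n)}_H
\]
and note that the inner factor lies in $(\ker R)^{\bot}$, where $R^{-1}R$ acts trivially. Combining this with $\norm{R^{-1}v_i}_H=\norm{v_i}_{H_R}$ (the defining property of the $H_R$-norm) yields at once $\norm{T_nA}\le\norm{A}$. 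The reverse inequality is obtained by plugging in the special test vectors $v_i=Rw_i$ with $w_i\in(\ker R)^{\bot}$, for which $R^{-1}v_i=w_i$ and $\norm{v_i}_{H_R}=\norm{w_i}_H$; then the supremum defining $\norm{T_nA}$ already dominates $\norm{A}$.

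To obtain bijectivity I would exhibit an explicit inverse, namely
\[
(S_nB)(w_1,\ldots,w_n):=R^{-1}B(Rw_1,\ldots,Rw_n),\qquad B\in\mathcal{L}^{(n)}(H_R),\ w_i\in(\ker R)^{\bot}.
\]
The identity $T_nS_n=\Id$ collapses to $RR^{-1}=\Id_{H_R}$, and $S_nT_n=\Id$ uses that both the arguments and the values of $A$ lie in $(\ker R)^{\bot}$, where $R^{-1}R$ is the identity. The case $n=0$ amounts to observing that $R_{|_{(\ker R)^{\bot}}}\colon(\ker R)^{\bot}\to H_R$ is precisely the isometric bijection built into \eqref{Chiamo} and \eqref{Rprod}. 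The compatibility relation is then a one-line check: evaluating both sides on an arbitrary tuple $(v_1,\ldots,v_{n-1})\in H_R^{n-1}$ reduces each of them to $RA(R^{-1}v_1,\ldots,R^{-1}v_{n-1},R^{-1}v)$.

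The only genuine subtlety is the possibly nontrivial kernel of $R$, which is exactly what forces one to work with $(\ker R)^{\bot}$ on the source side and to insist that $A$ takes values there. Once this bookkeeping is in place, every step is a single application of \eqref{orietta1} or \eqref{orietta2} together with the definition of $\norm{\cdot}_{H_R}$.
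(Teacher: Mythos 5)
Your proposal is correct and follows essentially the same route as the paper: the isometry comes from the two identities $\norm{R\,y}_{H_R}=\norm{y}_H$ for $y\in(\ker R)^{\bot}$ and $\norm{R^{-1}v}_H=\norm{v}_{H_R}$, together with the fact that $R^{-1}$ maps $H_R$ bijectively onto $(\ker R)^{\bot}$ (the paper writes this as a single chain of equalities of suprema rather than two inequalities, which is the same computation). You additionally spell out the explicit inverse $S_n$ and the compatibility identity, which the paper leaves implicit; no gaps.
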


\begin{proof}
For any $n\in\N$, since $R_{|_{(\ker R)^{\perp}}}:(\ker R)^{\perp}\rightarrow R(H)$ is linear and bijective, it follows that $T_{n}$ is linear. By \eqref{orietta1} and \eqref{orietta2}, for any $A\in \mathcal{L}^{(n)}((\ker R)^\perp)$ we have 
\begin{align*}
\norm{T_{n}A}_{\mathcal{L}^{(n)}(H_R)}&=\sup_{v_1,\ldots,v_n\in H_R\backslash\{0\}}\frac{\|RA(R^{-1}v_1,\ldots,R^{-1}v_n)\|_{H_R}}{\|v_1\|_{H_R}\cdots\|v_n\|_{H_R}}\\
&=\sup_{v_1,\ldots,v_n\in H_R\backslash\{0\}}\frac{\|A(R^{-1}v_1,\ldots,R^{-1}v_n)\|_H}{\|R^{-1}v_1\|_H\cdots\|R^{-1}v_n\|_H}\\
&=\sup_{h_1,\ldots,h_n\in (\ker R)^{\bot}\backslash \{0\}}\frac{\|A(h_1,\ldots,h_n)\|_H}{\|h_1\|_H\cdots\|h_n\|_H}=\norm{A}_{\mathcal{L}^{(n)}((\ker R)^{\bot})}.\qedhere
\end{align*}
\end{proof}

\begin{theorem}\label{identificazionen}
For any $n\in\N$, it holds ${\rm BUC}^n_{H_R}(H)={\rm BUC}^n_{R}(H)$.
Moreover if $\varphi\in {\rm BUC}^n_{R}(H)$ and $x\in H$ then
\begin{align}\label{idem}
\nabla^{n}_{H_R}\varphi(x)= T_{n-1}\left(\nabla^{n}_{R}\varphi(x)\right),
\end{align}
with $T_{n-1}$ as in Lemma \ref{iso}.
\end{theorem}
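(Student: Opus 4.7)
The plan is to proceed by induction on $n$ and to transport the differentiability structure back and forth through the isometric isomorphism $T_{n-1}$ of Lemma~\ref{iso}. The base case will follow from Proposition~\ref{ugua} combined with Theorem~\ref{Gateaux-FrechetGrad}, which upgrades $H_R$-Gateaux differentiability to $H_R$-Fr\'echet differentiability. The inductive step will combine the induction hypothesis (which provides the identification of the $n$-th derivatives) with the analogous Gateaux-to-Fr\'echet upgrade, now applied to the operator-valued map $\nabla^n_{H_R}\varphi$.

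\textbf{Base case $n=1$.} For the inclusion $\mathrm{BUC}^1_R(H)\subseteq\mathrm{BUC}^1_{H_R}(H)$, let $\varphi\in\mathrm{BUC}^1_R(H)$. By Proposition~\ref{ugua}, $\varphi$ is $H_R$-Gateaux differentiable with $\nabla_{G,H_R}\varphi(x)=R\nabla_R\varphi(x)$. Since $\nabla_R\varphi(x)\in(\ker R)^\bot$ by Proposition~\ref{ortogonaleR}, this is precisely $T_0(\nabla_R\varphi(x))$. Because $T_0$ is a linear isometry and $\nabla_R\varphi\in\mathrm{BUC}(H;H)$, we get $\nabla_{G,H_R}\varphi\in\mathrm{BUC}(H;H_R)$, and continuity in the $H$-norm implies $H_R$-continuity via \eqref{immercontinua}. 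Theorem~\ref{Gateaux-FrechetGrad} then promotes $\varphi$ to an $H_R$-Fr\'echet differentiable function with $\nabla_{H_R}\varphi=T_0\nabla_R\varphi$, yielding the desired inclusion and the formula \eqref{idem} for $n=1$. The opposite inclusion is the symmetric argument using $R^{-1}=T_0^{-1}$, the relation \eqref{uguaH}, and the fact that $T_0^{-1}$ is itself a bounded isometry.

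\textbf{Inductive step.} Assume the statement for $n$. For the forward direction let $\varphi\in\mathrm{BUC}^{n+1}_R(H)$; the induction hypothesis gives $\nabla^n_{H_R}\varphi(x)=T_{n-1}(\nabla^n_R\varphi(x))$. For $h=Rv\in H_R$ with $v\in H$, linearity of $T_{n-1}$ yields
\[
\frac{\nabla^n_{H_R}\varphi(x+sh)-\nabla^n_{H_R}\varphi(x)}{s}=T_{n-1}\!\left(\frac{\nabla^n_R\varphi(x+sRv)-\nabla^n_R\varphi(x)}{s}\right),
\]
and the defining relation of $\nabla^{n+1}_R\varphi(x)$ in Definition~\ref{C-D_diff} combined with the isometry of $T_{n-1}$ (Lemma~\ref{iso}) makes the right-hand side converge, as $s\to 0$, to $T_{n-1}(\nabla^{n+1}_R\varphi(x)(\cdot,\ldots,\cdot,v))$. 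Now $\nabla^{n+1}_R\varphi(x)$ annihilates $\ker R$-directions by Proposition~\ref{ortogonaleR}, so we may replace $v$ by $(I-P_{\ker R})v=R^{-1}Rv=R^{-1}h$; the last identity of Lemma~\ref{iso} then rewrites the limit as $(T_n\nabla^{n+1}_R\varphi(x))(\cdot,\ldots,\cdot,h)$. This proves $H_R$-Gateaux differentiability of $\nabla^n_{H_R}\varphi$ at $x$ with Gateaux derivative $T_n\nabla^{n+1}_R\varphi(x)$. Since $T_n$ is bounded and $\nabla^{n+1}_R\varphi\in\mathrm{BUC}(H;\mathcal{L}^{(n)}(H))$, this Gateaux derivative is BUC in $\mathcal{L}^{(n)}(H_R)$, so Theorem~\ref{Gateaux-Frechet} upgrades differentiability to the Fr\'echet sense and gives \eqref{idem} for $n+1$ together with $\varphi\in\mathrm{BUC}^{n+1}_{H_R}(H)$. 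The reverse direction is entirely symmetric via $T_n^{-1}$.

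\textbf{Main obstacle.} The subtle point will be keeping the identifications straight at each induction step: $\nabla^n_R\varphi(x)$ lives naturally in $\mathcal{L}^{(n-1)}(H;(\ker R)^\bot)$ whereas $T_{n-1}$ is defined on $\mathcal{L}^{(n-1)}((\ker R)^\bot)$, so one must use Proposition~\ref{ortogonaleR} both to pass to the restriction (before applying $T_{n-1}$) and to replace $v$ by $R^{-1}Rv$ in the limit formula. Once this bookkeeping is carried out, the chain Gateaux-limit~$\to$~isometry~$\to$~Gateaux-to-Fr\'echet closes the induction cleanly.
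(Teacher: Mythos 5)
Your proposal is correct and follows essentially the same route as the paper's proof: induction on $n$, with the base case handled by Proposition \ref{ugua} together with Theorem \ref{Gateaux-FrechetGrad}, and the inductive step obtained by transporting the difference quotients through the isometry $T_{n-1}$, invoking Proposition \ref{ortogonaleR} for the kernel bookkeeping and the last identity of Lemma \ref{iso} to recognize $(T_n\nabla^{n+1}_R\varphi(x))(\cdot,\ldots,\cdot,v_n)$, with the reverse inclusion via $T_n^{-1}$. Your explicit mention of the Gateaux-to-Fr\'echet upgrade in the inductive step is a point the paper leaves implicit, but the argument is the same.
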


\begin{proof}
We proceed by induction. We start by proving the base case $n=1$.
Let $\varphi:H\ra \R$; by Proposition \ref{ugua} the mapping $x\mapsto\nabla_{G,H_R}\varphi(x)$ belongs to ${\rm BUC}(H;H_R)$ if, and only if, the mapping $x\mapsto\nabla_{R}\varphi(x)$ belongs to ${\rm BUC}(H;H)$. Thus the case $n=1$ follows by Theorem \ref{Gateaux-FrechetGrad}.


Now we prove the induction step. Assume the thesis to be true for an integer $n\geq 2$. Let $\varphi\in {\rm BUC}^{n+1}_R(H)$, $x\in H$ and $v_n\in H_R\backslash\{0\}$ such that $v_n=Rh_n$ with $h_n\in (\ker R)^{\bot}$. By Proposition \ref{ortogonaleR} and Lemma \ref{iso} we infer
\begin{align*}
&\lim_{s\ra 0}\norm{\frac{\nabla^{n}_{H_R}\varphi(x+sv_{n})-\nabla^{n}_{H_R}\varphi(x)}{s}-T_{n-1}\left(\nabla^{n+1}_{R}\varphi(x)(\cdot,\ldots,\cdot,h_n)\right)}_{\mathcal{L}^{(n-1)}(H_R)}\notag\\
&\qquad=\lim_{s\ra 0}\norm{T_{n-1}\left(\frac{\nabla^{n}_{R}\varphi(x+sv_{n})-\nabla^{n}_{R}\varphi(x)}{s}-\nabla^{n+1}_{R}\varphi(x)(\cdot,\ldots,\cdot,h_n)\right)}_{\mathcal{L}^{(n-1)}(H_R)}\notag\\
&\qquad=\lim_{s\ra 0}\norm{\frac{\nabla^{n}_{R}\varphi(x+sRh_{n})-\nabla^{n}_{R}\varphi(x)}{s}-\nabla^{n+1}_{R}\varphi(x)(\cdot,\ldots,\cdot,h_n)}_{\mathcal{L}^{(n-1)}((\ker R)^{\bot})}=0.
\end{align*}
Since 
\[
T_{n-1}\left(\nabla^{n+1}_{R}\varphi(x)(\cdot,\ldots,\cdot,R^{-1}v_n)\right)=(T_{n}\nabla^{n+1}_{R}\varphi(x))(\cdot,\ldots,\cdot,v_n),
\]
we obtain \eqref{idem} and ${\rm BUC}^n_{R}(H)\subseteq {\rm BUC}^n_{H_R}(H)$ . The inclusion ${\rm BUC}^n_{H_R}(H)\subseteq {\rm BUC}^n_{R}(H)$ follows in a similar way using the operator $T_n^{-1}$ instead of the operator $T_n$.
\end{proof}

In view of the above result, from here on we will use the space ${\rm BUC}^k_R(H)$ to represent both ${\rm BUC}^k_{H_R}(H)$ and ${\rm BUC}^k_R(H)$.


\subsection{A Comparison with the classical notions of differentiability}\label{Real}
We focus here on the relationship between the $R$-differentiability and $H_R$-dif\-fer\-en\-tiability, and the classical Fr\'echet and Gateaux differentiability.

\begin{proposition}\label{gat-Rdiff}
For any $n\in\N$, if $\varphi:H\ra\R$ is $n$-times Gateaux differentiable, then $\varphi$ is $n$-times $R$-differentiable and for any $x\in H$ and $n\geq 2$ it holds
\[
\nabla^n_R\varphi(x)(v_1,\ldots,v_{n-1})=R\nabla^n_G\varphi(x)(Rv_1,\ldots,Rv_{n-1}),\qquad v_1,\ldots,v_{n-1}\in H.
\]
While if $n=1$, then for any $x\in H$
\[\nabla_R\varphi(x)=R\nabla_G\varphi(x).\]
\end{proposition}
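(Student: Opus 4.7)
The plan is to proceed by induction on $n$. For the base case $n=1$, the Gateaux differentiability of $\varphi$ at $x$ yields, by Definitions \ref{H-Gateaux} and \ref{gradienti}(ii), that for every $w\in H$
\[
\lim_{s\to 0}\left|\frac{\varphi(x+sw)-\varphi(x)}{s}-\langle \nabla_G\varphi(x),w\rangle_H\right|=0.
\]
Specializing to $w=Rv$ for $v\in H$ and using the self-adjointness of $R$ to rewrite $\langle \nabla_G\varphi(x),Rv\rangle_H=\langle R\nabla_G\varphi(x),v\rangle_H$ gives exactly the defining limit \eqref{C-D_1} of $R$-differentiability, so by Definition \ref{C-D_diff} one reads off $\nabla_R\varphi(x)=R\nabla_G\varphi(x)$.

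For the inductive step, assume the claim for $n$ and suppose $\varphi$ is $(n+1)$-times Gateaux differentiable. Applying the inductive hypothesis uniformly in $y\in H$ yields $\nabla_R^n\varphi(y)(v_1,\ldots,v_{n-1})=R\nabla_G^n\varphi(y)(Rv_1,\ldots,Rv_{n-1})$. I would then take as candidate
\[
A(v_1,\ldots,v_n):=R\nabla_G^{n+1}\varphi(x)(Rv_1,\ldots,Rv_n),
\]
which belongs to $\mathcal{L}^{(n)}(H;H)$ by boundedness of $R$ and of $\nabla_G^{n+1}\varphi(x)$, and verify the $(n+1)$-st order analogue of \eqref{C-D_2}, namely that for every $v_n\in H$
\[
\lim_{s\to 0}\left\|\frac{\nabla_R^n\varphi(x+sRv_n)-\nabla_R^n\varphi(x)}{s}-A(\cdot,\ldots,\cdot,v_n)\right\|_{\mathcal{L}^{(n-1)}(H)}=0.
\]

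Substituting the inductive formula for $\nabla_R^n\varphi$ and pulling the outer $R$ out of the norm bounds the left-hand side above by $\|R\|_{\mathcal{L}(H)}$ times
\[
\sup_{\|v_1\|,\ldots,\|v_{n-1}\|\leq 1}\!\!\left\|\frac{\nabla_G^n\varphi(x+sRv_n)(Rv_1,\ldots,Rv_{n-1})-\nabla_G^n\varphi(x)(Rv_1,\ldots,Rv_{n-1})}{s}-\nabla_G^{n+1}\varphi(x)(Rv_1,\ldots,Rv_n)\right\|_H.
\]
Since $\|Rv_i\|_H\leq\|R\|_{\mathcal{L}(H)}$ when $\|v_i\|\leq 1$, a further rescaling dominates this supremum by $\|R\|_{\mathcal{L}(H)}^{n-1}$ times the full $\mathcal{L}^{(n-1)}(H;H)$-operator norm of the incremental ratio of $\nabla_G^n\varphi$ at $x$ in the direction $Rv_n$ minus $\nabla_G^{n+1}\varphi(x)(\cdot,\ldots,\cdot,Rv_n)$; this tends to $0$ by the Gateaux differentiability of $\nabla_G^n\varphi$ at $x$ with target space $Y=\mathcal{L}^{(n-1)}(H;H)$ applied to the direction $Rv_n$.

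The main obstacle is exactly this passage from the single-direction Gateaux convergence that the hypothesis supplies to the operator-norm convergence that Definition \ref{C-D_diff} demands for iterated $R$-differentiability. What makes it work is that the remaining $n-1$ test vectors enter only through the bounded operator $R$, so the restricted supremum encountered on the left-hand side is automatically controlled by a scalar multiple of the full $\mathcal{L}^{(n-1)}(H;H)$-operator norm built into Definition \ref{H-Gateaux}, with the constant being a power of $\|R\|_{\mathcal{L}(H)}$.
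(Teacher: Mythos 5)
Your proof is correct and follows essentially the same route as the paper's: induction on $n$, with the base case obtained by specializing the Gateaux limit to the directions $Rv$ and using the self-adjointness of $R$, and the inductive step obtained by substituting the inductive formula for $\nabla^n_R\varphi$ and factoring out $\norm{R}_{\mathcal{L}(H)}$. The only difference is that you make explicit the supremum over unit vectors $v_1,\ldots,v_{n-1}$ (costing a further factor $\norm{R}_{\mathcal{L}(H)}^{n-1}$) needed to upgrade the pointwise estimate to convergence in the operator norm of $\mathcal{L}^{(n-1)}(H)$, a step the paper's proof leaves implicit when it passes to the limit in \eqref{Brando}.
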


\begin{proof}
We proceed by induction.
Let $\varphi:H\ra\R$ be a Gateaux differentiable function and let $x,v\in H$. By Definition \ref{gradienti} we have
\begin{equation*}
\lim_{s\ra 0}\abs{\frac{\varphi(x+sRv)-\varphi(x)}{s}-\langle \nabla_G\varphi(x),Rv\rangle_H}=0.
\end{equation*}
Thus the thesis follows for $n=1$.  Now we assume that the statements hold true for $n$ and we prove it for $n+1$. Let $\varphi:H\ra\R$ be a $(n+1)$-times Gateaux differentiable function and let $x,v_1,\ldots,v_n\in H$. By the inductive hypothesis we have for any $s\in\R\setminus\{0\}$ 
\begin{align}
&\bigg\|\frac{\nabla^n_R\varphi(x+sRv_n)(v_1,\ldots,v_{n-1})-\nabla^n_R\varphi(x)(v_1,\ldots,v_{n-1})}{s}-R\nabla^{n+1}_G\varphi(x)(Rv_1,\ldots,Rv_n)\bigg\|_{H}\notag\\
&=\bigg\|\frac{R\nabla_G^n\varphi(x+sRv_n)(Rv_1,\ldots,Rv_{n-1})-R\nabla^n_G\varphi(x)(Rv_1,\ldots,Rv_{n-1})}{s}\notag\\
&\qquad\qquad\qquad\qquad\qquad\qquad\qquad\qquad\qquad\qquad\qquad-R\nabla^{n+1}_G\varphi(x)(Rv_1,\ldots,Rv_n)\bigg\|_{H}\notag\\
&\leq\norm{R}_{\mathcal{L}(H)}\bigg\|\frac{\nabla^n_G\varphi(x+sRv_n)(Rv_1,\ldots,Rv_{n-1})-\nabla^n_G\varphi(x)(Rv_1,\ldots,Rv_{n-1})}{s}\notag\\
&\qquad\qquad\qquad\qquad\qquad\qquad\qquad\qquad\qquad\qquad\qquad-\nabla^{n+1}_G\varphi(x)(Rv_1,\ldots,Rv_n)\bigg\|_{H}.\label{Brando}
\end{align}
To conclude it is enough to take the limit as $s$ approaches zero in \eqref{Brando}.
\end{proof}

Combining Theorem \ref{Gateaux-FrechetGrad}, Propositions \ref{identificazionen} and \ref{gat-Rdiff} we obtain the following result.

\begin{theorem}\label{equiGDC-GF}
For any $k\in\N$, if $\varphi\in {\rm BUC}^k(H)$ then $\varphi\in {\rm BUC}_{R}^k(H)$ (and so it belongs in ${\rm BUC}_{H_R}^k(H)$, by Proposition \ref{identificazionen}) and for any $x\in H$ and $k\geq 2$ it holds
\begin{align*}
\nabla^k_{H_R}\varphi(x)(h_1,\ldots,h_{k-1})&=R^2\nabla^k\varphi(x)(h_1,\ldots,h_{k-1}),& h_1,\ldots,h_{k-1}\in H_R;\\
\nabla^k_R\varphi(x)(v_1,\ldots,v_{k-1})&=R\nabla^k\varphi(x)(Rv_1,\ldots,Rv_{k-1}),& v_1,\ldots,v_{k-1}\in H.
\end{align*}
Furthermore if $k=1$, for any $x\in H$
\begin{align}
\label{relation_Malliavin}
\nabla_{H_R}\varphi(x)=R^2\nabla\varphi(x),\qquad\text{ and }\qquad
\nabla_R\varphi(x)=R\nabla\varphi(x).
\end{align}
\end{theorem}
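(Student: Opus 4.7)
The plan is to deduce this theorem by chaining together the three results mentioned in its statement, noting that $\varphi\in{\rm BUC}^k(H)$ implies in particular that $\varphi$ is $k$-times Fr\'echet differentiable on $H$, hence $k$-times Gateaux differentiable with $\J^k_G\varphi=\J^k\varphi$ and consequently $\nabla_G^k\varphi=\nabla^k\varphi$. This immediately activates Proposition \ref{gat-Rdiff}, which gives $R$-differentiability and the formula
\[
\nabla^k_R\varphi(x)(v_1,\ldots,v_{k-1})=R\nabla^k\varphi(x)(Rv_1,\ldots,Rv_{k-1}),\qquad v_1,\ldots,v_{k-1}\in H,
\]
for $k\geq 2$, together with $\nabla_R\varphi(x)=R\nabla\varphi(x)$ for $k=1$. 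This already establishes one of the two asserted identities.

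First I would verify the regularity needed to land in ${\rm BUC}^k_R(H)$. Using the formula above, for each $i\in\{1,\ldots,k\}$ the map $x\mapsto\nabla^i_R\varphi(x)$ is a composition of $\nabla^i\varphi:H\to\mathcal{L}^{(i-1)}(H)$ with the bounded linear operations $A\mapsto R A(R\,\cdot,\ldots,R\,\cdot)$; since $\nabla^i\varphi\in{\rm BUC}(H;\mathcal{L}^{(i-1)}(H))$ and $R\in\mathcal{L}(H)$, it follows that $\nabla^i_R\varphi\in{\rm BUC}(H;\mathcal{L}^{(i-1)}(H))$ with norm controlled by $\|R\|^i_{\mathcal{L}(H)}\sup_x\|\nabla^i\varphi(x)\|_{\mathcal{L}^{(i-1)}(H)}$. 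Hence $\varphi\in{\rm BUC}^k_R(H)$, and by Theorem \ref{identificazionen} also $\varphi\in{\rm BUC}^k_{H_R}(H)$.

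Next I would derive the $H_R$-gradient formula. For $k\geq 2$, Theorem \ref{identificazionen} yields $\nabla^k_{H_R}\varphi(x)=T_{k-1}(\nabla^k_R\varphi(x))$. Evaluating at $h_1,\ldots,h_{k-1}\in H_R$ and unfolding the definition of $T_{k-1}$ from Lemma \ref{iso} gives
\[
\nabla^k_{H_R}\varphi(x)(h_1,\ldots,h_{k-1})=R\,\nabla^k_R\varphi(x)(R^{-1}h_1,\ldots,R^{-1}h_{k-1}).
\]
Substituting the formula for $\nabla^k_R\varphi$ from Proposition \ref{gat-Rdiff} and using the identity $RR^{-1}=\Id_{H_R}$ from \eqref{orietta1} (valid precisely because each $h_i$ lies in $H_R$) collapses the inner $R\,R^{-1}$ pairs, producing $R^2\nabla^k\varphi(x)(h_1,\ldots,h_{k-1})$, which is the desired identity. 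For $k=1$, I would instead invoke Proposition \ref{ugua}: it gives $\nabla_{G,H_R}\varphi(x)=R\nabla_R\varphi(x)=R^2\nabla\varphi(x)$, and since $\nabla_{G,H_R}\varphi$ is $H_R$-continuous (again by composition of continuous maps), Theorem \ref{Gateaux-FrechetGrad} upgrades Gateaux to Fr\'echet and identifies $\nabla_{H_R}\varphi(x)$ with $\nabla_{G,H_R}\varphi(x)$, closing the argument.

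I do not anticipate any serious obstacle: all the analytic content has been packaged into Proposition \ref{gat-Rdiff}, Lemma \ref{iso}, Theorem \ref{identificazionen} and Theorem \ref{Gateaux-FrechetGrad}. The only point requiring care is the interplay between $R$ and $R^{-1}$ on the two spaces $H$ and $H_R$, where one must remember that $RR^{-1}$ acts as the identity on $H_R$ while $R^{-1}R$ is only the projection $\Id_H-P_{\ker R}$; this is exactly why one must insert the arguments $h_i\in H_R$ to make the cancellation $RR^{-1}h_i=h_i$ go through and extract the factor $R^2$.
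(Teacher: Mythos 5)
Your proof is correct and follows exactly the route the paper intends: the paper's own ``proof'' is the single sentence that the theorem follows by combining Theorem \ref{Gateaux-FrechetGrad}, Theorem \ref{identificazionen} and Proposition \ref{gat-Rdiff}, and your argument is precisely that chain with the details (the unfolding of $T_{k-1}$, the cancellation $RR^{-1}h_i=h_i$ for $h_i\in H_R$, and the continuity of the composed gradient maps) filled in. No gaps.
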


\section{Malliavin calculus in Wiener spaces}
\label{Mal_top_sec}

We start by considering a Gaussian framework. We introduce on $(H,\mathcal{B}(H))$ a centered (that is with zero mean) Gaussian measure $\gamma$ with covariance operator $Q$. Here $Q\in\mathcal{L}(H)$ is a self-adjoint non-negative and trace class operator. 
The aim of this Section is to recall the construction of the Malliavin derivative operators in the sense of Gross and in the sense of Cannarsa--Da Prato (mainly referring to the books \cite{BOGIE1} and \cite{DaPrato}, respectively); then to show that they can be interpreted as two (different) examples of the general notion of Malliavin derivative (see  Appendix \ref{Malliavin_abstract_sec}).
In particular, we will show that the Malliavin derivative in the sense of Gross and in the sense of Cannarsa--Da Prato are different operators but with the same domain.

\subsection{The Gaussian Hilbert space $H_\gamma^*$}
\label{X^*_g}
We will denote by $(H^*)'$ the algebraic dual of $H^*$, namely the space of all linear (not necessarely continuous) functional  $f:H^*\ra\R$. The space $H^*$ is included in $L^2(H,\mathcal{B}(H), \gamma)$ and the inclusion mapping $j:H^* \rightarrow L^2(H, \gamma)$ is continuous. The space
\begin{equation*}
H_\gamma^*:= \text{closure of $j(H^*)$ in $L^2(H, \gamma)$},
\end{equation*}
when endowed with the scalar product of $L^2(H,\gamma)$, is a Gaussian Hilbert space (see e.g. \cite[Lemma 2.2.8]{BOGIE1}). We introduce the covariance operator $R_\gamma: H_\gamma^* \rightarrow (H^*)'$ defined as
\begin{equation*}
R_\gamma f(g):=\langle f, j(g)\rangle_{L^2(H, \gamma)}=\int_Hfj(g)d\gamma, \qquad f \in H^*_\gamma, \ g \in H^*.
\end{equation*}
$R_\gamma$ is injective and its range is contained in $H$ (see e.g. \cite[Proposition 2.3.6]{Lunardi}).

We define the Cameron--Martin space $K$ (for the measure $\gamma$) as $K:=R_\gamma(H_\gamma^*)\subseteq H$. $K$ inherits a structure of separable Hilbert space through $R_\gamma$ (see e.g. \cite[Lemma 2.4.1]{BOGIE1}), that is introducing the mapping
\[
\hat\cdot:=R_\gamma^{-1}: K \rightarrow H^*_\gamma\subseteq L^2(H,\gamma)
\]
it holds 
\begin{equation*}
\langle h,k\rangle _K:=\langle \hat h, \hat k\rangle_{L^2(H, \gamma)}=\int_H\hat{h}\hat{k}d\gamma,
\end{equation*}
whenever $h, k \in K$ with $h=R_\gamma \hat h$, $k=R_\gamma \hat k$.
As proved in \cite[Theorem 4.2.7]{Lunardi}, the Cameron--Martin space coincide with the Hilbert space $H_{Q^{1/2}}=Q^{1/2}(H)$ and its inner product is given by
\begin{equation*}
\scal{h}{k}_K=\scal{h}{k}_{Q^{1/2}}:=\langle Q^{-1/2}h,Q^{-1/2}k\rangle_{H},\qquad h,k\in K=H_{Q^{1/2}}.
\end{equation*}
From the very definition of the Cameron--Martin space it follows that the mapping $\hat\cdot:=R_\gamma^{-1}$ is a unitary operator and this yields that
\begin{equation}
\label{star}
H_\gamma^*=\{\hat{h}\in L^2(H,\gamma)\,|\, h \in K\},
\end{equation}
where every $\hat h\in H_\gamma^*$ is a centered Gaussian random variable with variance $\|\hat h\|^2_{L^2(H, \gamma)}=\|h\|^2_K$.

On the other hand, when the measure $\gamma$ is non degenerate (that is $\ker Q=\{0\}$), the Cameron--Martin space turns out to be dense in $H$ (see e.g. \cite[Lemma 2.16]{DaPrato}). In this case, see \cite[Section 2.5.2]{DaPrato}, the mapping $\hat \cdot =R_\gamma^{-1}:K \rightarrow H_\gamma^*\subseteq L^2(H, \gamma)$ can be uniquely extended to a linear isometry $\mathcal{W}_\bullet$ defined as 
\[\mathcal{W}_{\bullet}:H \rightarrow H^*_\gamma \subseteq L^2(H, \gamma).\] 
In the literature the mapping $\mathcal{W}_{\bullet}$ is usually called white noise mapping. Thus,  $\mathcal{W}_{\bullet}$ is a unitary operator and it holds
\begin{equation}
\label{star_bis}
H_\gamma^*=\{\mathcal{W}_z\in L^2(H,\gamma)\,|\, z \in H\}.
\end{equation}
Every  $\mathcal{W}_z$ is a centered Gaussian random variable with variance $\|\mathcal{W}_z\|^2_{L^2(H, \gamma)}=\|z\|^2_H$.

\subsection{Sobolev spaces}
\label{gradient_Sobolev}
We denote by $\nabla_{H_{Q^{1/2}}}$ and $\nabla_{Q^{1/2}}$ the gradient operators introduced in Definitions \ref{gradienti} and \ref{C-D_diff}, respectively, with the choice $R=Q^{1/2}$ and $H_0=H_{Q^{1/2}}$. In Section \ref{sec_2} we analyzed the relations between this two operators.
 
\begin{lemma}\label{lemma_equiv}
Let $Q\in\mathcal{L}(H)$ be a self-adjoint non-negative and trace class operator with $\ker Q=\{0\}$. For any $\varphi \in {\rm BUC}^1(H)$,
\begin{equation*}
\langle\nabla_{H_{Q^{1/2}}}\varphi(x), h \rangle_{H_{Q^{1/2}}}= \langle \nabla_{Q^{1/2}} \varphi(x), z\rangle_H, \qquad z \in H, \ h \in H_{Q^{1/2}} \ \text{with} \ h=Q^{1/2}z.
\end{equation*}
In particular,
\(
\|\nabla_{H_{Q^{1/2}}}\varphi(x)\|_{H_{Q^{1/2}}} =\|\nabla_{Q^{1/2}}  \varphi(x)\|_H. 
\)
\end{lemma}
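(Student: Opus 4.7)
The plan is to reduce both sides of the claimed identity to an expression involving the classical Fr\'echet gradient $\nabla \varphi$, and then compare. Since $\varphi \in {\rm BUC}^1(H)$, Theorem \ref{equiGDC-GF} (applied with $R=Q^{1/2}$) tells us that $\varphi$ belongs to both ${\rm BUC}^1_{H_{Q^{1/2}}}(H)$ and ${\rm BUC}^1_{Q^{1/2}}(H)$, and gives the explicit formulas
\begin{equation*}
\nabla_{H_{Q^{1/2}}}\varphi(x)=Q\,\nabla\varphi(x),\qquad \nabla_{Q^{1/2}}\varphi(x)=Q^{1/2}\nabla\varphi(x),
\end{equation*}
where $\nabla\varphi(x)$ is the usual Fr\'echet gradient. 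Here the assumption $\ker Q=\{0\}$ ensures that $\ker Q^{1/2}=\{0\}$, so that $(\ker Q^{1/2})^{\perp}=H$ and the pseudo-inverse $Q^{-1/2}:H_{Q^{1/2}}\to H$ from \eqref{Chiamo} is a well-defined linear bijection satisfying $Q^{-1/2}Q^{1/2}=\Id_H$ on all of $H$.

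Next I would unfold the inner product on the Cameron--Martin space. By the definition of $\langle\cdot,\cdot\rangle_{H_{Q^{1/2}}}$ in \eqref{Rprod}, for any $h=Q^{1/2}z\in H_{Q^{1/2}}$,
\begin{equation*}
\langle\nabla_{H_{Q^{1/2}}}\varphi(x),h\rangle_{H_{Q^{1/2}}}
=\langle Q\,\nabla\varphi(x),\,Q^{1/2}z\rangle_{H_{Q^{1/2}}}
=\langle Q^{-1/2}Q\,\nabla\varphi(x),\,Q^{-1/2}Q^{1/2}z\rangle_H.
\end{equation*}
Using $Q^{-1/2}Q^{1/2}=\Id_H$ and $Q^{-1/2}Q=Q^{-1/2}Q^{1/2}Q^{1/2}=Q^{1/2}$, the right-hand side becomes $\langle Q^{1/2}\nabla\varphi(x),z\rangle_H=\langle\nabla_{Q^{1/2}}\varphi(x),z\rangle_H$, which is exactly the claimed identity.

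The norm equality then follows by choosing $z=Q^{1/2}\nabla\varphi(x)$ and $h=Q^{1/2}z=Q\nabla\varphi(x)=\nabla_{H_{Q^{1/2}}}\varphi(x)$, or more directly by computing
\begin{equation*}
\|\nabla_{H_{Q^{1/2}}}\varphi(x)\|_{H_{Q^{1/2}}}=\|Q\nabla\varphi(x)\|_{H_{Q^{1/2}}}=\|Q^{-1/2}Q\nabla\varphi(x)\|_H=\|Q^{1/2}\nabla\varphi(x)\|_H=\|\nabla_{Q^{1/2}}\varphi(x)\|_H.
\end{equation*}
There is no real obstacle here: the entire argument is a clean consequence of Theorem \ref{equiGDC-GF} together with the definition of the pseudo-inverse and of the $H_{Q^{1/2}}$-inner product; the only point requiring care is verifying that $\ker Q=\{0\}$ allows us to apply $Q^{-1/2}$ freely, in particular to $Q\nabla\varphi(x)$, which lies in $H_{Q^{1/2}}=\Ran(Q^{1/2})$ since $Q\nabla\varphi(x)=Q^{1/2}(Q^{1/2}\nabla\varphi(x))$.
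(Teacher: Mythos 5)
Your argument is correct. Note, however, that the paper states Lemma \ref{lemma_equiv} without any proof: within the paper's own architecture the intended justification is Proposition \ref{ugua} applied with $R=Q^{1/2}$, which already contains the statement verbatim --- its identity \eqref{uguaH} reads $\langle \nabla_{Q^{1/2}}\varphi(x),z\rangle_H=\langle Q^{-1/2}\nabla_{G,H_{Q^{1/2}}}\varphi(x),z\rangle_H=\langle \nabla_{G,H_{Q^{1/2}}}\varphi(x),Q^{1/2}z\rangle_{H_{Q^{1/2}}}$, and the norm equality $\|\nabla_{Q^{1/2}}\varphi(x)\|_H=\|\nabla_{G,H_{Q^{1/2}}}\varphi(x)\|_{H_{Q^{1/2}}}$ is stated there explicitly; one only has to observe that for $\varphi\in{\rm BUC}^1(H)$ the Gateaux and Fr\'echet $H_{Q^{1/2}}$-gradients coincide. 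Your route instead passes through Theorem \ref{equiGDC-GF}, reducing everything to the classical gradient via $\nabla_{H_{Q^{1/2}}}\varphi=Q\nabla\varphi$ and $\nabla_{Q^{1/2}}\varphi=Q^{1/2}\nabla\varphi$ and then unwinding the Cameron--Martin inner product with the pseudo-inverse. This is perfectly valid, and you correctly handle the two points that need care (that $Q\nabla\varphi(x)\in\Ran(Q^{1/2})$ so that $Q^{-1/2}$ may be applied, and that $Q^{-1/2}Q^{1/2}=\Id_H$ because $\ker Q^{1/2}=\ker Q=\{0\}$). The trade-off is that your argument genuinely uses the Fr\'echet differentiability of $\varphi$ on all of $H$, whereas the route through Proposition \ref{ugua} yields the same conclusion under the weaker hypothesis that $\varphi$ is merely $Q^{1/2}$-differentiable; since the lemma is stated for ${\rm BUC}^1(H)$, both approaches suffice here.
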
   
The following integration by parts formula with respect to $\gamma$ is well known (see e.g. \cite[Theorem 5.1.8]{BOGIE1})
\begin{equation}
\label{i.b.p.Banach}
\int_H \langle\nabla_{H_{Q^{1/2}}} \varphi, h\rangle_{H_{Q^{1/2}}} d\gamma= \int_H \varphi \hat h  d\gamma, \qquad \varphi\in C^1_{b}(H),\ h \in H_{Q^{1/2}},
\end{equation}
and in \cite[Chapter 5]{BOGIE1} it is used to prove that the operator
\begin{equation*}
\nabla_{H_{Q^{1/2}}}:C_b^1(H)\subseteq L^p(H,\gamma)\ra L^p(H,\gamma; {H_{Q^{1/2}}}),
\end{equation*}
is closable as an operator from $ L^p(H,\gamma)$ to $L^p(H,\gamma; {H_{Q^{1/2}}})$, for any $p \in[1,+\infty)$; for a proof see \cite[Proposition 9.3.7]{Lunardi}.
The Sobolev spaces $W_{{H_{Q^{1/2}}}}^{1,p}(H,\gamma)$ are defined as the domain of the closure of the operator $\nabla_{H_{Q^{1/2}}}$, still denoted by $\nabla_{H_{Q^{1/2}}}$, in $L^p(H,\gamma)$. $W_{{H_{Q^{1/2}}}}^{1,p}(H,\gamma)$ is a Banach space with the norm
\begin{equation}
\label{norm_D_H}
\|f\|_{W_{{H_{Q^{1/2}}}}^{1,p}(H,\gamma)}^p:=\|f\|^p_{L^p(H, \gamma)}+\|\nabla_{H_{Q^{1/2}}}f\|^p_{L^p(H,\gamma; {H_{Q^{1/2}}})}.
\end{equation}
Moreover, the integration by parts formula \eqref{i.b.p.Banach} holds for any $\varphi$ belonging to $W_{H_{Q^{1/2}}}^{1,p}(H,\gamma)$ and $h \in H_{Q^{1/2}}$ (see e.g. \cite[Proposition 9.3.10]{Lunardi}).

When $\gamma$ is non degenerate, Lemma \ref{lemma_equiv} provides the following equivalent form of the integration by parts formula \eqref{i.b.p.Banach} 
\begin{equation}
\label{ibpnew}
\int_H \langle \nabla_{Q^{1/2}} \varphi, z\rangle_H d\gamma = \int_H \varphi  \mathcal{W}_zd\gamma, \qquad \varphi \in C_b^1(H), \ z \in H,
\end{equation}
where we used $\hat h=\mathcal{W}_z$ for $h=Q^{1/2} z$. The integration by parts formula \eqref{ibpnew} is the one used in \cite{DaPrato}
to prove that the operator
$\nabla_{Q^{1/2}}:C_b^1(H) \rightarrow L^p(H,\gamma;H)$ is closable as an unbounded operator from $L^p(H, \gamma)$ to $L^p(H, \gamma;H)$, for any $ p\in[1,+\infty)$. The Sobolev space $W_{Q^{1/2}}^{1,p}(H,\gamma)$ is defined as the domain of the closure of the operator $\nabla_{Q^{1/2} }$, denoted by $M$. It is a Banach space with the norm
\begin{equation}
\label{norm_M}
\|f\|_{W_{Q^{1/2}}^{1,p}(H,\gamma)}^p:=\|f\|^p_{L^p(H, \gamma)}+\|M f\|^p_{L^p(H, \gamma; H)}.
\end{equation}
Moreover, the integration by parts formula \eqref{ibpnew} holds for any $\varphi \in W_{Q^{1/2}}^{1,p}(H,\gamma)$ and $z \in H$.

In the following sections we show that the gradient operators $\nabla_{H_{Q^{1/2}}}$ and $M$, can be thought as Malliavin derivative operators. For this purpose, referring back to Section \ref{Malliavin_abstract_sec}, it will be enough to identify the choices of the probability space $(\Omega, \mathcal{F}, \mathbb{P})$, the Gaussian Hilbert space $\mathcal{H}_1$, the Hilbert space $\mathcal{H}$ and the unitary operator $W$.

\begin{remark}
Given $R\nabla:C^1_b(H)\subseteq L^p(H, \gamma) \rightarrow L^p(H, \gamma;H)$, under specific compatibility assumptions between $R$ and $Q$ it is possible to prove that $R\nabla$ is closable, we call generalized gradient the closure of it (see, for example, \cite{BF22,GGvN03}). The Sobolev space $W^{1,p}_R(H, \gamma)$ is the domain of the closure of the operator $R\nabla$. 
See also \cite{Add-Mur-Ros2022} for the problem of equivalence of Sobolev norms.

\end{remark}

\subsection{Malliavin derivative in the sense of Gross}
\label{Bogachev}
  
In  \cite{GRO1} (see also \cite{BOGIE1}), the reference probability space is $(\Omega, \mathcal{F}, \mathbb{P})=(H, \mathcal{B}(H), \gamma)$, with $\gamma$ a centered Gaussian measure. The Gaussian Hilbert space $\mathcal{H}_1$ is  $H_\gamma^*$, the space $\mathcal{H}$ is the Cameron--Martin space $K=H_{Q^{1/2}}$ and  the unitary operator $W$ is the operator $\hat \cdot=R_\gamma^{-1}$.  With these identifications, by comparing the integration by parts formula 
 \begin{equation*}
\int_H \langle\nabla_{H_{Q^{1/2}}} \varphi, h\rangle_{H_{Q^{1/2}}} d\gamma= \int_H \varphi \hat  h d\gamma,  \qquad \varphi \in W_{{H_{Q^{1/2}}}}^{1,2}(H,\gamma)={\rm Dom}(\nabla_{H_{Q^{1/2}}}), \  h \in H_{Q^{1/2}}
\end{equation*}
with \eqref{ibp_1}:
\begin{equation*}
\mathbb{E}\left[ \langle D\varphi, h \rangle_{\mathcal{H}}\right]=\mathbb{E}\left [\varphi W(h)\right], \qquad \forall \varphi \in \mathbb{D}^{1,2}={\rm Dom}(D), \ h \in \mathcal{H},
\end{equation*}
we immediately see that the Malliavin derivative in \cite{GRO1} (see also \cite{BOGIE1}) is the gradient operator $\nabla_{H_{Q^{1/2}}}$.
 
\subsection{Malliavin derivative in the sense of Cannarsa and Da Prato}
 \label{DaPrato}
In \cite{DaPrato} the reference probability space is $(\Omega, \mathcal{F}, \mathbb{P})=(H, \mathcal{B}(H), \gamma)$, with $\gamma$ a centered non degenerate Gaussian measure. The Gaussian Hilbert space $\mathcal{H}_1$ is $H_\gamma^*$, the space $\mathcal{H}$ is $H$ itself and  the unitary operator $W$ is the white noise mapping $\mathcal{W}_\bullet$. 
With these identifications, by comparing the integration by parts formula 
\begin{equation*}
\int_H \langle M \varphi, z\rangle_H d\gamma = \int_H \varphi \mathcal{W}_z d\gamma,\qquad \varphi \in W_{Q^{1/2}}^{1,2}(H,\gamma)={\rm Dom}(M), \  z \in H,
\end{equation*}
with \eqref{ibp_1}, we immediately see that the Malliavin derivative in \cite{DaPrato} is the gradient operator $M$.

\subsection{Final remarks}
\label{fin_rem}
 
The Malliavin derivatives $\nabla_{H_{Q^{1/2}}}$ and $M$ of Sections \ref{Bogachev} and \ref{DaPrato} are different. Indeed \eqref{relation_Malliavin} yields the relation $\nabla_{H_{Q^{1/2}}}=Q^{1/2}M$. On the other hand, the domain of the two derivatives is the same, that is 
\[
W_{Q^{1/2}}^{1,2}(H,\gamma)=W_{H_{Q^{1/2}}}^{1,2}(H,\gamma).
\] 
In fact, thanks to Lemma \ref{lemma_equiv}, the closure of the space $C_b^1(H)$ with respect to the norm \eqref{norm_D_H} is the same as its closure with respect to the norm \eqref{norm_M}.
This should not be surprising in light of the general results of Section \ref{Malliavin_abstract_sec}: in Sections \ref{Bogachev} and \ref{DaPrato} the reference Gaussian Hilbert space $\mathcal{H}_1$ is the same, that is $H_\gamma^*$; thus Proposition \ref{charD12} ensures the two Malliavin derivatives $\nabla_{H_{Q^{1/2}}}$ and $M$ to have the same domain.
What changes in Sections \ref{Bogachev} and \ref{DaPrato} is how the space $\mathcal{H}_1$ is characterized.  In Section \ref{Bogachev} we considered the unitary operator $\hat \cdot =R_\gamma^{-1}$ between $H_{Q^{1/2}}$ and $\mathcal{H}_1$ and obtain the characterization \eqref{star}, whereas in Section \ref{DaPrato} we considered the unitary operator $\mathcal{W}_\bullet$ between $H$ and $\mathcal{H}_1$ and obtain the characterization \eqref{star_bis}. This naturally leads to different Malliavin derivatives, having chosen different Hilbert spaces $\mathcal{H}$ and unitary operators $W$.

\section{Application: Lasry--Lions approximation and an interpolation result}\label{Lasry}

We consider the same framework of Section \ref{sec_2}.
We introduce here the notions of $H_0$-H\"older and $R$-H\"older functions. We prove this notions to be equivalent when $H_0=H_R$. We thus prove an interpolation type result for the space of $H_R$-H\"older functions. A key role in the proof is played by Lasry--Lions type approximations along the space $H_R$ (see Subsection \ref{Mourinho}).

\subsection{H\"older and Lipschitz functions along subspaces}

We recall here the notions of $H_0$-H\"olderianity ($H_0$-Lipschitzianity, respectively) and $R$-H\"olderianity ($R$-Lipschitzianity, respectively) and show that they are equivalent when $H_0=H_R$.

\begin{definition}
We say that $\varphi:H\ra \R$ is a $H_0$-H\"older function of exponent $\alpha\in(0,1)$ if there exists a positive constant $L_{\alpha,H_0}$ such that for any $x\in H$ and $h\in H_0$ it holds
\begin{align}\label{H-holder}
|\varphi(x+h)-\varphi(x)|\leq L_{\alpha,H_0}\norm{h}^\alpha_{H_0}.
\end{align}
The infimum of all the possible constants $L_{\alpha,H_0}$ appearing in \eqref{H-holder} is called $H_0$-H\"older constant constant of $\varphi$. 
\end{definition}
It is trivial to see that a $H_0$-H\"older function $\varphi:H\ra \R$ is $H_0$-continuous. When $H_0=H$ we recover the classical definition of H\"older continuous function from $H$ to $\R$.
Moreover, by \eqref{immercontinua}, if $\varphi$ is H\"older continuous, then $\varphi$ is $H_0$-H\"older. The converse is not true as shown by the following example. 
\begin{example}
For any $\alpha\in (0,1)$, we consider the function $\varphi_\alpha:H\ra \R$ defined as
\begin{align*}
\varphi_\alpha(x):=\eqsys{
\|x\|^{\alpha}_{H_0}, & x\in H_0;\\
0, & \text{otherwise.}}
\end{align*}
$\varphi_\alpha$ is $H_0$-H\"older of exponent $\alpha$, but it is not continuous. 
\end{example}
%
%

\begin{definition}
For any $\alpha\in(0,1)$ we denote by ${\rm BUC}^\alpha_{H_0}(H)$ the subspace of ${\rm BUC}(H)$ given by all $H_0$-H\"older functions of exponent $\alpha$. 
\end{definition}

\noindent For any $\alpha\in (0,1)$, the space ${\rm BUC}^\alpha_{H_0}(H)$ is a Banach space, if endowed with the norm
\begin{align*}
||\varphi||_{{\rm BUC}^\alpha_{H_0}(H)}:=\norm{\varphi}_\infty+[\varphi]_{H_0,\alpha},
\end{align*}
where
\begin{align*}
[\varphi]_{H_0,\alpha}:=\sup_{\substack{x\in H;\\ h\in H_0\setminus\{0\}}}\frac{|\varphi(x+h)-\varphi(x)|}{\|h\|^\alpha_{H_0}}.
\end{align*}
If $H=H_0$ we write ${\rm BUC}^\alpha(H)$ and  $[\varphi]_{\alpha}$.

\begin{definition}
Let $\alpha\in(0,1)$. We say that $\varphi:H\ra \R$ is $R$-H\"older of exponent $\alpha$ if there exists $L_{\alpha,R}>0$ such that for any $x,v\in H$ it holds 
\begin{align}\label{R-holder}
|\varphi(x+Rv)-\varphi(x)|\leq L_{\alpha,R}\norm{v}^\alpha_{H}.
\end{align}
The infimum of all the possible constants $L_{\alpha,R}$ appearing in \eqref{R-holder} is called $R$-H\"older constant constant of $\varphi$. 
\end{definition}

\begin{definition}
Let $\alpha\in(0,1)$. We denote by ${\rm BUC}^\alpha_R(H)$ the subspace of ${\rm BUC}(H)$ of the $R$-H\"older functions of exponent $\alpha$.
\end{definition}

\noindent For any $\alpha\in (0,1)$, the space ${\rm BUC}^\alpha_{R}(H)$ is a Banach space, 
if endowed with the norm
\[
\norm{f}_{R,\alpha}:=\norm{f}_{\infty}+[f]_{R,\alpha},
\]
where
\[
[f]_{R,\alpha}:=\sup_{x,v\in H,\; v\neq 0}\dfrac{\abs{f(x+Rv)-f(x)}}{\norm{v}^\alpha_H}.
\]

Let us compare the above definitions in the specific case $H_0=H_R$.

\begin{proposition}
If $\alpha\in (0,1)$, then ${\rm BUC}^{\alpha}_{H_R}(H)={\rm BUC}^{\alpha}_{R}(H)$. 
\end{proposition}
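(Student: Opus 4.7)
The strategy is to verify the two inclusions separately, exploiting the definition $\|h\|_{H_R} = \|R^{-1}h\|_H$ on $H_R$ together with the identities \eqref{orietta1}--\eqref{orietta2} relating $R$ and its pseudo-inverse $R^{-1}$.

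For the inclusion ${\rm BUC}^{\alpha}_{H_R}(H)\subseteq{\rm BUC}^{\alpha}_{R}(H)$, I would fix $\varphi\in{\rm BUC}^{\alpha}_{H_R}(H)$ with $H_R$-H\"older constant $L_{\alpha,H_R}$, pick arbitrary $x,v\in H$, and apply the defining estimate \eqref{H-holder} to the increment $h:=Rv\in H_R$. Using \eqref{orietta1} we get
\[
\|Rv\|_{H_R}=\|R^{-1}Rv\|_H=\|(\Id_H-P_{\ker R})v\|_H\leq\|v\|_H,
\]
hence
\[
|\varphi(x+Rv)-\varphi(x)|\leq L_{\alpha,H_R}\|Rv\|_{H_R}^\alpha\leq L_{\alpha,H_R}\|v\|_H^\alpha,
\]
showing $\varphi\in{\rm BUC}^{\alpha}_{R}(H)$ with $[\varphi]_{R,\alpha}\leq[\varphi]_{H_R,\alpha}$.

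For the reverse inclusion ${\rm BUC}^{\alpha}_{R}(H)\subseteq{\rm BUC}^{\alpha}_{H_R}(H)$, I would fix $\varphi\in{\rm BUC}^{\alpha}_{R}(H)$ and an arbitrary $x\in H$, $h\in H_R$. Since $h\in R(H)$, we can write $h=Rv$ with the canonical choice $v:=R^{-1}h\in(\ker R)^{\bot}$, which satisfies $Rv=RR^{-1}h=h$ by \eqref{orietta1}. By the very definition of the $H_R$-norm, $\|v\|_H=\|R^{-1}h\|_H=\|h\|_{H_R}$, so
\[
|\varphi(x+h)-\varphi(x)|=|\varphi(x+Rv)-\varphi(x)|\leq L_{\alpha,R}\|v\|_H^\alpha=L_{\alpha,R}\|h\|_{H_R}^\alpha,
\]
yielding $\varphi\in{\rm BUC}^{\alpha}_{H_R}(H)$ with $[\varphi]_{H_R,\alpha}\leq[\varphi]_{R,\alpha}$. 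Membership in ${\rm BUC}(H)$ is common to both definitions so nothing extra is needed there.

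There is no real obstacle: the argument is a direct translation between the two norms via $R$ and $R^{-1}$, and as a by-product one obtains $[\varphi]_{R,\alpha}=[\varphi]_{H_R,\alpha}$, so the two Banach space norms coincide. The only point that must be handled with some care (and that is the reason the paper drops the injectivity assumption of \cite{CDP96}) is that when $\ker R\neq\{0\}$ one must use the pseudo-inverse $R^{-1}:H_R\to(\ker R)^{\bot}$ from \eqref{Chiamo} rather than a genuine inverse; the identities \eqref{orietta1}--\eqref{orietta2} make both directions go through transparently.
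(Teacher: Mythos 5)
Your proof is correct and follows essentially the same route as the paper, which simply observes that the substitution $h=Rv$ identifies \eqref{R-holder} with \eqref{H-holder}; you merely spell out, via \eqref{orietta1}--\eqref{orietta2}, why the two seminorms agree even when $\ker R\neq\{0\}$. The extra care with the pseudo-inverse is sound and yields the same conclusion $[\varphi]_{R,\alpha}=[\varphi]_{H_R,\alpha}$.
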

\begin{proof}
Simply letting $h=Rv$, it immediately follows that \eqref{R-holder} coincides with \eqref{H-holder}.
\end{proof}
In view of the above result, from here on we will use the space ${\rm BUC}_R^\alpha(H)$ to represent both ${\rm BUC}^\alpha_{H_R}(H)$ and ${\rm BUC}_R^\alpha(H)$. We state now a useful characterization of the space ${\rm BUC}_R^\alpha(H)$ whenever $\ker R=\{0\}$.

\begin{proposition}
Assume that $\ker R=\{0\}$ and let $\alpha\in (0,1)$ and $\varphi\in {\rm BUC}(H)$. $\varphi$ belongs to ${\rm BUC}_{R}^{\alpha}(H)$ if, and only if, the function $\varphi\circ R$ belongs to ${\rm BUC}^{\alpha}(H)$. Furthermore it holds
\begin{equation*}
[\varphi]_{R,\alpha}=[\varphi\circ R]_{\alpha}.
\end{equation*}
\end{proposition}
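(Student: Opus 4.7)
The plan is to prove the two implications separately and observe that each direction actually yields the quantitative inequality on seminorms, so that combining them gives the equality $[\varphi]_{R,\alpha} = [\varphi \circ R]_\alpha$.

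First I would handle the easy direction: assuming $\varphi \in {\rm BUC}_R^\alpha(H)$, for any $y, z \in H$ I can write
\[
(\varphi \circ R)(y) - (\varphi \circ R)(z) = \varphi(Rz + R(y-z)) - \varphi(Rz),
\]
and applying \eqref{R-holder} with $x := Rz$ and $v := y-z$ gives $|(\varphi\circ R)(y) - (\varphi\circ R)(z)| \leq [\varphi]_{R,\alpha}\|y-z\|_H^\alpha$. Since $\varphi \circ R$ is also bounded and uniformly continuous (as composition of a bounded uniformly continuous function with a bounded linear operator), this yields $\varphi \circ R \in {\rm BUC}^\alpha(H)$ with $[\varphi \circ R]_\alpha \leq [\varphi]_{R,\alpha}$.

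For the converse direction, assume $\varphi \circ R \in {\rm BUC}^\alpha(H)$. When $x$ happens to lie in $R(H)$, say $x = Rw$, for any $v\in H$
\[
\varphi(x + Rv) - \varphi(x) = (\varphi \circ R)(w+v) - (\varphi \circ R)(w),
\]
so $|\varphi(x+Rv)-\varphi(x)| \leq [\varphi \circ R]_\alpha \|v\|_H^\alpha$. The main obstacle is to extend this bound to an arbitrary $x \in H$. Here I would invoke the hypothesis $\ker R = \{0\}$: since $R$ is self-adjoint, by \cite[Lemma VI.2.8]{DS88I} (as recalled in Section~\ref{BeppeDif}) this is equivalent to $R(H)$ being dense in $H$. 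Given any $x \in H$ and $v \in H$, pick a sequence $(x_n)_{n\in\N} \subseteq R(H)$ with $x_n \to x$ in $H$. The previous step gives
\[
|\varphi(x_n + Rv) - \varphi(x_n)| \leq [\varphi \circ R]_\alpha \|v\|_H^\alpha
\]
for every $n$, and letting $n\to +\infty$ using the continuity of $\varphi$ on both sides yields the same bound for $x$. Therefore $\varphi \in {\rm BUC}_R^\alpha(H)$ with $[\varphi]_{R,\alpha} \leq [\varphi \circ R]_\alpha$, and combined with the previous inequality this proves the equality of seminorms.
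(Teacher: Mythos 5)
Your proposal is correct and follows essentially the same route as the paper: the forward direction by rewriting $\varphi(Rx)-\varphi(Ry)$ as an $R$-increment at the point $Ry$, and the converse by first treating points of $R(H)$ and then using the density of $R(H)$ in $H$ (equivalent to $\ker R=\{0\}$) together with the continuity of $\varphi$ to pass to the limit; both directions yield the two inequalities whose combination gives $[\varphi]_{R,\alpha}=[\varphi\circ R]_{\alpha}$.
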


\begin{proof} 
Let us start by noticing that $H_R$ is dense in $H$, since $\ker R=\{0\}$. We begin to prove that $\varphi\in {\rm BUC}_{R}^{\alpha}(H)$ implies $\varphi\circ R\in {\rm BUC}^{\alpha}(H)$.
If $\varphi\in{\rm BUC}_{R}^\alpha(H)$, then for any $x,y\in H$ it holds
\begin{align*}
|(\varphi\circ R)(x)-(\varphi\circ R)(y)| &=|\varphi(Rx)-\varphi(Ry)|\\
&=|\varphi(Ry+(Rx-Ry))-\varphi(Ry)|\leq[\varphi]_{R,\alpha}\|x-y\|_{H}^\alpha.
\end{align*}
So $\varphi\circ R\in{\rm BUC}^\alpha(H)$ and $[\varphi\circ R]_{\alpha}\leq [\varphi]_{R,\alpha}$. 

Now let $\varphi\circ R\in{\rm BUC}^\alpha(H)$, $x\in H$ and let $(x_n=Ry_n)_{n\in\N}\subseteq H_R$ be a sequence converging to $x$ in $H$. For any $v\in H$ it follows  
\begin{align*}
|\varphi(x+Rv)-\varphi(x)|&
=\lim_{n\ra+\infty}|\varphi(Ry_n+Rv)-\varphi(Ry_n)|\\
&=\lim_{n\ra+\infty}|(\varphi\circ R)(y_n+v)-(\varphi\circ R)(y_n)|\leq [\varphi\circ R]_\alpha\|v\|_H^\alpha.
\end{align*}
So $\varphi\in{\rm BUC}_{R}^\alpha(H)$ and $[\varphi]_{R,\alpha}\leq [\varphi\circ R]_{\alpha}$.
\end{proof}

Now we introduce the notion of $R$-Lipschitz function.

\begin{definition}
We say that $\varphi:H\ra \R$ is $R$-Lipschitz, respectively if there exists $L_{R}>0$ such that for any $x,v\in H$ it holds 
\begin{align}\label{R-lip}
|\varphi(x+Rv)-\varphi(x)|\leq L_{R}\norm{v}_{H}.
\end{align}
The infimum of all the possible constants $L_{R}$ appearing in \eqref{R-lip} is called $R$-Lipschitz constant constant of $\varphi$. 
\end{definition}

\begin{definition}
We denote by ${\rm Lip}_{b,R}(H)$ the subspace of ${\rm BUC}(H)$ of the $R$-Lipschitz function.
\end{definition}
\noindent ${\rm Lip}_{b,R}(H)$ is a Banach space, if endowed with the norm
\[
\norm{f}_{{\rm Lip}_{b,R}(H)}:=\norm{f}_{\infty}+[f]_{R},
\]
where
\[
[f]_{R}:=\sup_{x,v\in H,\; v\neq 0}\frac{\abs{f(x+Rv)-f(x)}}{\norm{v}_H}.
\]
It easy to see that \eqref{R-lip} is equivalent to \eqref{H-Lip}. Hence by Proposition \ref{Lip} and Theorem \ref{identificazionen}, we deduce that 
\(
{\rm BUC}^1_R(H)\subseteq {\rm Lip}_{b,R}(H).
\)

\subsection{Lasry--Lions type approximations}\label{Mourinho}
We recall the classical Lasry--Lions approximating procedure introduced in  \cite{LL86}. 
\begin{theorem}\label{Lions}
Let $f\in  {\rm BUC}(H)$ and $t>0$; we define the function
\begin{align*}
S(t)f(x):=\sup_{z\in H}\set{\inf_{y\in H}\set{f(x+z-y)+\frac{1}{2t}\|y\|_H^2}-\frac{1}{t}\|z\|_H^2},\qquad x\in H.
\end{align*}
Then $\{S(t)f\}_{t\geq 0}\subseteq{\rm BUC}^1(H)$ and for any $x\in H$ it holds
\(
\lim_{t\ra 0^+}\abs{S(t)f(x)-f(x)}=0.
\)
\end{theorem}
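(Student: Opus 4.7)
The plan is to rewrite $S(t)f$ as a Moreau--Yosida regularization of an auxiliary convex function and then invoke the classical $C^{1,1}$ regularity of such regularizations on a Hilbert space. The changes of variable $w=x+z-y$ in the inner infimum and $u=x+z$ in the outer supremum turn the sup-inf into the cleaner form
\[
S(t)f(x) = \sup_{u \in H}\bigl\{f_t(u) - \tfrac{1}{t}\|u-x\|_H^2\bigr\},\qquad f_t(u) := \inf_{w \in H}\bigl\{f(w) + \tfrac{1}{2t}\|u-w\|_H^2\bigr\}.
\]
Expanding the square in the definition of $f_t$ exhibits $f_t(u) - \tfrac{1}{2t}\|u\|_H^2$ as an infimum of affine functions of $u$, hence concave; equivalently, $\tilde f(u) := \tfrac{1}{2t}\|u\|_H^2 - f_t(u)$ is convex. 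Since $\inf f \le f_t \le \sup f$, this $\tilde f$ is also proper, lower semicontinuous, bounded below, and coercive.

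The key algebraic identity is obtained by plugging $f_t(u) = \tfrac{1}{2t}\|u\|_H^2 - \tilde f(u)$ into the outer sup and completing the square in $u$; a short computation gives
\[
S(t)f(x) = \tfrac{1}{t}\|x\|_H^2 - \tilde f_t(2x),\qquad \tilde f_t(v) := \inf_{u \in H}\bigl\{\tilde f(u) + \tfrac{1}{2t}\|v-u\|_H^2\bigr\},
\]
where $\tilde f_t$ is the genuine Moreau--Yosida regularization at parameter $t$ of the convex, proper, lsc function $\tilde f$. By classical convex analysis, $\tilde f_t$ is finite, convex, and Fr\'echet differentiable everywhere on $H$ with a globally $1/t$-Lipschitz gradient. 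Hence $S(t)f$ is Fr\'echet differentiable on all of $H$ with Lipschitz gradient. The sandwich $f_t(x) \le S(t)f(x) \le \sup f$ yields the global bound $\inf f \le S(t)f \le \sup f$; the standard estimate that a bounded real-valued function on $H$ with $L$-Lipschitz gradient has gradient bounded by $2\sqrt{L\|S(t)f\|_\infty}$ then forces $\nabla S(t)f$ to be bounded as well. Thus both $S(t)f$ and $\nabla S(t)f$ are bounded and Lipschitz, so $S(t)f \in {\rm BUC}^1(H)$.

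For the pointwise convergence as $t \to 0^+$, I would use the sandwich $f_t(x) \le S(t)f(x) \le \sup_{u\in H}\bigl\{f(u) - \tfrac{1}{t}\|u-x\|_H^2\bigr\}$, where the lower bound comes from taking $u=x$ in the definition of $S(t)f$ and the upper bound from $f_t \le f$. In both the defining infimum for $f_t(x)$ and the supremum on the right, any near-optimizer must lie within distance $O(\sqrt t)$ of $x$ (otherwise the quadratic penalty exceeds $2\|f\|_\infty$), and uniform continuity of $f$ then drives both bounds to $f(x)$. The step I expect to require the most care is the derivation of the identity $S(t)f(x) = \tfrac{1}{t}\|x\|_H^2 - \tilde f_t(2x)$: it depends crucially on the mismatch between the coefficient $\tfrac{1}{2t}$ inside $f_t$ and $\tfrac{1}{t}$ in the outer sup, since replacing the outer $\tfrac{1}{t}$ by $\tfrac{1}{2t}$ would eliminate the quadratic term in $u$ and destroy the Moreau structure. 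Once that identity is in hand, the $C^1$ regularity is a direct consequence of classical convex analysis rather than a delicate semi-convex/semi-concave argument.
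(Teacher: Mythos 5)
Your proposal is correct, but it cannot be compared line-by-line with ``the paper's proof'' because the paper gives none: Theorem \ref{Lions} is stated as a recalled classical result with a pointer to \cite{LL86}, where the regularity of the sup--inf convolution $(f_t)^s$ is obtained for general parameters $0<s<t$ by showing it is simultaneously semiconvex and semiconcave. Your route is genuinely different and exploits the specific coefficients in the statement: since the outer penalty $\tfrac1t\|\cdot\|^2$ is exactly twice the inner one $\tfrac{1}{2t}\|\cdot\|^2$ (i.e.\ $s=t/2$), the completion of the square collapses the double convolution into a single Moreau--Yosida envelope, $S(t)f(x)=\tfrac1t\|x\|_H^2-\tilde f_t(2x)$, and the $C^{1,1}$ regularity then falls out of Moreau's theorem for the proper convex l.s.c.\ function $\tilde f(u)=\sup_{w}\{\tfrac1t\langle u,w\rangle_H-f(w)-\tfrac{1}{2t}\|w\|_H^2\}$ (a supremum of continuous affine maps, hence automatically convex and l.s.c., finite because $f$ is bounded). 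I checked the algebra of the key identity and it is right, as are the sandwich $\inf f\le f_t\le S(t)f\le\sup f$, the Landau-type bound $\|\nabla S(t)f\|_\infty\le 2\sqrt{L\|S(t)f\|_\infty}$ giving boundedness of the gradient, and the $O(\sqrt t)$ localization of near-optimizers that yields the pointwise (in fact uniform) convergence. What your approach buys is a short, self-contained proof avoiding the semiconvexity/semiconcavity machinery; what it gives up is generality, since it is tied to the ratio $2$ between the two quadratic penalties and would not cover the full Lasry--Lions family. Two cosmetic remarks: the membership in ${\rm BUC}^1(H)$ should be stated for $t>0$ only (the $t=0$ member of the family in the statement is a slip of the paper, not yours), and you may as well record that the convergence $S(t)f\to f$ is uniform on $H$, since your modulus-of-continuity estimate is independent of $x$.
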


We now recall a modification of the Lasry--Lions approximating procedure presented in \cite{CDP96} (if $\ker R=\{0\}$): given $f\in {\rm BUC}(H)$ and $t>0$ one defines the function 
\begin{align}\label{LLCD0}
\mathcal{S}^R(t)f(x):=\sup_{w\in H}\set{\inf_{v\in H}\set{f(v)+\frac{1}{2t}\|R^{-1}(w-v)\|_H^2}-\frac{1}{t}\|R^{-1}(w-x)\|_H^2},\qquad x\in H
\end{align}
with the convention that $\|R^{-1}y\|=+\infty$ if $y\notin R(H)$.
We will consider a slight modification of \eqref{LLCD0} obtained via a change of variables 
\begin{align}\label{LLCD}
S^R(t)f(x):=\sup_{h\in H_R}\set{\inf_{k\in H_R}\set{f(x+k-h)+\frac{1}{2t}\|k\|_{H_R}^2}-\frac{1}{t}\|h\|_{H_R}^2},\qquad x\in H.
\end{align}



\begin{proposition}\label{Lio}
For every $f\in {\rm BUC}(H)$ and $t>0$, the mapping $x\mapsto S^R(t)f(x)$ belongs to ${\rm BUC}(H)$.
\end{proposition}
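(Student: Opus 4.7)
The plan is to establish two facts about $S^R(t)f$: that it is bounded by $\norm{f}_\infty$ on $H$, and that it inherits the uniform continuity modulus of $f$. For boundedness, I would evaluate the inner infimum at $k = 0$ to obtain, for every $h \in H_R$,
\[
\inf_{k \in H_R}\set{f(x+k-h)+\tfrac{1}{2t}\norm{k}_{H_R}^2} \leq f(x-h) \leq \norm{f}_\infty,
\]
so subtracting $\tfrac{1}{t}\norm{h}_{H_R}^2 \geq 0$ and then taking the supremum over $h$ yields $S^R(t)f(x) \leq \norm{f}_\infty$. Conversely, evaluating the outer supremum at $h = 0$ gives $S^R(t)f(x) \geq \inf_{k \in H_R}\set{f(x+k)+\tfrac{1}{2t}\norm{k}_{H_R}^2} \geq -\norm{f}_\infty$. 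In particular every sup and inf appearing in the definition is finite.

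For uniform continuity, the decisive observation is that the difference $f(x+k-h) - f(y+k-h)$ depends only on the shift $x - y$. Letting $\omega_f$ denote the modulus of continuity of $f \in {\rm BUC}(H)$, we therefore have $|f(x+k-h) - f(y+k-h)| \leq \omega_f(\norm{x-y}_H)$ uniformly in $h, k \in H_R$. I would then combine the elementary inequalities $\sup_h a_h - \sup_h b_h \leq \sup_h(a_h - b_h)$ and $\inf_k c_k - \inf_k d_k \leq \sup_k(c_k - d_k)$ (both justified by the finiteness established above) with the crucial cancellation of the quadratic penalties $\tfrac{1}{2t}\norm{k}_{H_R}^2$ and $-\tfrac{1}{t}\norm{h}_{H_R}^2$ under differences, to chain
\[
S^R(t)f(x) - S^R(t)f(y) \leq \sup_{h, k \in H_R}\bigl[f(x+k-h) - f(y+k-h)\bigr] \leq \omega_f(\norm{x-y}_H).
\]
Exchanging the roles of $x$ and $y$ produces $|S^R(t)f(x) - S^R(t)f(y)| \leq \omega_f(\norm{x-y}_H)$, so uniform continuity follows from $\omega_f(\delta) \to 0$ as $\delta \to 0^+$.

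There is no genuine obstacle here: the argument is a standard manipulation of inf-sup expressions, made possible by the fact that the regularizing quadratic terms do not depend on the base point and cancel in forming $S^R(t)f(x) - S^R(t)f(y)$. The only care needed is to verify the finiteness of the suprema and infima, which is automatic from $f \in {\rm BUC}(H)$, so that the sup-difference and inf-difference inequalities used above are legitimate. I would not need any hypothesis on $R$ (e.g.\ injectivity) or on the geometry of $H_R$, only the boundedness and uniform continuity of $f$ on $H$.
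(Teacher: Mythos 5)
Your proof is correct and follows essentially the same route as the paper: both arguments exploit that the quadratic penalties $\tfrac{1}{2t}\norm{k}_{H_R}^2$ and $\tfrac{1}{t}\norm{h}_{H_R}^2$ do not depend on the base point and cancel in the difference $S^R(t)f(x)-S^R(t)f(y)$, reducing everything to the uniform continuity of $f$; the paper phrases this via $\sigma$-approximate optimizers while you use the equivalent sup/inf-difference inequalities. You also explicitly verify boundedness (via $k=0$ and $h=0$), which the paper defers to the estimates \eqref{Gerald1}--\eqref{Gerald2} of the following proposition, so your write-up is if anything slightly more self-contained.
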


\begin{proof}
%
Fix $t>0$. We prove that $S^R(t)f\in{\rm BUC}(H)$.  Since $f$ is uniformly continuous we know that for every $\eta>0$ there exists $\delta:=\delta(\eta)>0$ such that for every $x,y\in H$ with $0<|x-y|<\delta$ it holds $|f(x)-f(y)|<\eta$. Let $x,y\in H$ be such that $0<|x-y|<\delta$, then for every $\sigma>0$ there exist $h_\sigma,k_\sigma\in H_R$ such that
\begin{align*}
S^R(t)f(x)-S^R(t)f(y)&\leq \inf_{k\in H_R}\set{f(x+h_\sigma-k)+\frac{1}{2\eps}\|k\|_R^2}-\frac{1}{\eps}\|h_\sigma\|_R^2+\sigma\\
&\qquad\qquad\qquad-\inf_{k\in H_R}\set{f(y+h_\sigma-k)+\frac{1}{2\eps}\|k\|_R^2}+\frac{1}{\eps}\|h_\sigma\|_R^2\\
&\leq f(x+h_\sigma-k_\sigma)+\frac{1}{2\eps}\|k_\sigma\|_R^2-f(y+h_\sigma-k_\sigma)-\frac{1}{2\eps}\|k_\sigma\|_R^2+2\sigma\\
&\leq \eta+2\sigma.
\end{align*}
Using similar arguments we get that $S^R(t)f(x)-S^R(t)f(y)\geq -\eta-2\sigma$. So $S^R(t)f$ is uniformly continuous.
\end{proof} 


The following proposition summarize some of the properties of $\{S^R(t)f\}_{t\geq0}$ that we will use throughout this section.

\begin{proposition}
Let $f\in {\rm BUC}^\alpha_R(H)$, for some $\alpha\in(0,1)$. Let $\{S^R(t)f\}_{t\geq 0}$ be the family of functions introduced in \eqref{LLCD}.
There exists $c_\alpha>0$ such that for every $t>0$ and $x\in H$ it holds
\begin{align}
\|S^R(t)f\|_\infty&\leq\|f\|_\infty;\label{LL_limitatezza}\\
0\leq f(x)-S^R(t)f(x)&\leq c_\alpha [f]_{R,\alpha}^{2/(2-\alpha)}t^{\alpha/(2-\alpha)};\label{LL_approximation}\\
[S^R(t)f]_R &\leq 2\big(2c_\alpha [f]_{R,\alpha}^{2/(2-\alpha)}\big)^{1/2}t^{(\alpha-1)/(2-\alpha)}.\label{LL_derivative_estimate}
\end{align}
In particular the mapping $x\mapsto S^R(t)f(x)$ belongs to ${\rm Lip}_b(H)$, for every $t>0$.
\end{proposition}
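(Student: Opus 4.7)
The plan is to prove each estimate by an appropriate choice of test values $h, k \in H_R$ in the sup-inf structure of \eqref{LLCD}, combined with the $R$-H\"older hypothesis on $f$.

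Estimate \eqref{LL_limitatezza} is immediate: taking $k = 0$ in the inner infimum gives $\inf_k\{\cdots\} \leq f(x-h) \leq \|f\|_\infty$, so $S^R(t)f(x) \leq \|f\|_\infty$; and taking $h = 0$ in the outer supremum gives $S^R(t)f(x) \geq \inf_k\{f(x+k) + \tfrac{1}{2t}\|k\|_{H_R}^2\} \geq -\|f\|_\infty$. For \eqref{LL_approximation}, the inequality $S^R(t)f(x) \leq f(x)$ follows by testing $k = h$ in the inner infimum, obtaining $\inf_k\{\cdots\} \leq f(x) + \tfrac{1}{2t}\|h\|_{H_R}^2$ and hence an outer term $\leq f(x) - \tfrac{1}{2t}\|h\|_{H_R}^2 \leq f(x)$. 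For the opposite direction I would exploit $R$-H\"olderianity, using $\|k\|_{H_R} = \|R^{-1}k\|_H$ for $k \in H_R$ to get $f(x+k-h) \geq f(x-h) - [f]_{R,\alpha}\|k\|_{H_R}^\alpha$, and then reduce the inner infimum to the scalar optimization $\sup_{r \geq 0}\{[f]_{R,\alpha}r^\alpha - \tfrac{1}{2t}r^2\}$, whose explicit value is $c_\alpha [f]_{R,\alpha}^{2/(2-\alpha)} t^{\alpha/(2-\alpha)}$ with $c_\alpha := \alpha^{\alpha/(2-\alpha)}(2-\alpha)/2$ (attained at $r^\ast = (\alpha t [f]_{R,\alpha})^{1/(2-\alpha)}$); choosing $h = 0$ in the outer supremum then yields the claim.

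The heart of the proposition is \eqref{LL_derivative_estimate}. I would exploit the translation invariance
\begin{align*}
G(x + Rv, h + Rv) = G(x, h), \qquad G(x, h) := \inf_{k \in H_R}\Bigl\{f(x+k-h) + \tfrac{1}{2t}\|k\|_{H_R}^2\Bigr\},
\end{align*}
which is manifest from the definition. Given $\sigma > 0$ and $y := x + Rv$, select a near-maximizer $h_\sigma^\ast \in H_R$ for $S^R(t)f(y)$. Testing $k = h_\sigma^\ast$ in the inner infimum gives $S^R(t)f(y) \leq f(y) - \tfrac{1}{2t}\|h_\sigma^\ast\|_{H_R}^2 + \sigma$, which combined with the lower bound from \eqref{LL_approximation} yields the a priori estimate $\|h_\sigma^\ast\|_{H_R}^2 \leq 2c_\alpha [f]_{R,\alpha}^{2/(2-\alpha)} t^{2/(2-\alpha)} + 2t\sigma$. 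Using $h_\sigma^\ast - Rv \in H_R$ as a test value for $S^R(t)f(x)$ and invoking translation invariance $G(x, h_\sigma^\ast - Rv) = G(y, h_\sigma^\ast)$, expansion of the squared norms together with $\|Rv\|_{H_R} \leq \|v\|_H$ produces
\begin{align*}
S^R(t)f(y) - S^R(t)f(x) \leq \tfrac{2}{t}\|h_\sigma^\ast\|_{H_R}\|v\|_H + \tfrac{1}{t}\|v\|_H^2 + \sigma.
\end{align*}

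The principal difficulty is the superfluous term $\tfrac{1}{t}\|v\|_H^2$, which obstructs a uniform Lipschitz bound from a single shift. I would remove it by a telescoping argument: for $n \in \N$, set $x_i := x + (i/n)Rv$ for $i = 0,\ldots,n$ and apply the one-step estimate to each consecutive pair $(x_i, x_{i+1})$ with error $\sigma_n := \sigma/n^2$. Summing the $n$ contributions gives
\begin{align*}
S^R(t)f(y) - S^R(t)f(x) \leq 2\bigl(2c_\alpha [f]_{R,\alpha}^{2/(2-\alpha)}\bigr)^{1/2} t^{(\alpha-1)/(2-\alpha)}\|v\|_H + \tfrac{\|v\|_H^2}{nt} + \tfrac{\sigma}{n} + o_{n,\sigma}(1),
\end{align*}
and letting $n \to \infty$ followed by $\sigma \to 0$ eliminates the remainder. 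The reverse inequality is obtained symmetrically by selecting a near-maximizer for $S^R(t)f(x)$ and testing with $h_\sigma^\ast + Rv$, completing the proof of \eqref{LL_derivative_estimate}. The final membership claim then follows from \eqref{LL_limitatezza} and \eqref{LL_derivative_estimate}.
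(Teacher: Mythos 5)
Your proposal is correct, and for the first two estimates it follows essentially the paper's route: the same test choices ($k=h$ in the inner infimum, $h=0$ in the outer supremum) give \eqref{LL_limitatezza} and the sign $0\le f(x)-S^R(t)f(x)$, while for the remaining bound in \eqref{LL_approximation} you replace the paper's near-minimizer $k_\eta$ combined with Young's inequality by the exact scalar optimization $\sup_{r\ge0}\set{[f]_{R,\alpha}r^\alpha-\tfrac{1}{2t}r^2}$, which is cleaner and produces the sharp constant $c_\alpha=\alpha^{\alpha/(2-\alpha)}(2-\alpha)/2$ (the paper only needs \emph{some} $c_\alpha$, so both are acceptable). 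The genuine divergence is in \eqref{LL_derivative_estimate}. The paper stops at the one-step inequality
\begin{align*}
\abs{S^R(t)f(x+h)-S^R(t)f(x)}\leq \frac{1}{t}\norm{h}_{H_R}^2+2\norm{h}_{H_R}\big(2c_\alpha[f]_{R,\alpha}^{2/(2-\alpha)}\big)^{1/2}t^{(\alpha-1)/(2-\alpha)}
\end{align*}
and then disposes of the quadratic term by distinguishing $\norm{h}_{H_R}\le1$ from $\norm{h}_{H_R}>1$; this yields $R$-Lipschitzianity of $S^R(t)f$, but with a constant larger than the one displayed in \eqref{LL_derivative_estimate}. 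Your telescoping along $x_i=x+(i/n)Rv$, with tolerances $\sigma/n^2$ and the a priori bound on the near-maximizers (which is uniform in the base point, being a consequence of \eqref{LL_approximation}), makes the quadratic remainder vanish as $n\to\infty$ and delivers exactly the stated constant $2\big(2c_\alpha[f]_{R,\alpha}^{2/(2-\alpha)}\big)^{1/2}t^{(\alpha-1)/(2-\alpha)}$; in this respect your argument is tighter than the paper's own. Two small remarks: the inequality $\norm{Rv}_{H_R}\le\norm{v}_H$ that you invoke is $\norm{R^{-1}Rv}_H=\norm{v-P_{\ker R}v}_H\le\norm{v}_H$, which is indeed the correct way to handle the non-injective case; and for the final membership claim you should also cite Proposition \ref{Lio} for the uniform continuity of $S^R(t)f$, since ${\rm Lip}_{b,R}(H)$ is defined as a subspace of ${\rm BUC}(H)$.
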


\begin{proof}
We start by proving \eqref{LL_limitatezza}. 
\begin{align}
S^R(t)f(x)&=\sup_{h\in H_R}\set{\inf_{k\in H_R}\set{f(x+h-k)+\frac{1}{2t}\|k\|_{H_R}^2}-\frac{1}{t}\|h\|_{H_R}^2}\notag\\
&\leq \sup_{h\in H_R}\set{f(x)+\frac{1}{2t}\|h\|_{H_R}^2-\frac{1}{t}\|h\|_{H_R}^2}\leq f(x)\leq \|f\|_\infty.\label{Gerald1}
\end{align}
In a similar way 
\begin{align}
S^R(t)f(x)&=\sup_{h\in H_R}\set{\inf_{k\in H_R}\set{f(x+h-k)+\frac{1}{2t}\|k\|_{H_R}^2}-\frac{1}{t}\|h\|_{H_R}^2}\notag\\
&\geq \inf_{k\in H_R}\set{f(x-k)+\frac{1}{2t}\|k\|_{H_R}^2}\geq -\|f\|_\infty.\label{Gerald2}
\end{align}
By \eqref{Gerald1} and \eqref{Gerald2} we get \eqref{LL_limitatezza}.

Let us now prove \eqref{LL_approximation}. By \eqref{LLCD}, for every $\eta>0$ there exists $k_\eta\in H_R$ such that
\begin{align}
0\leq f(x)-S^R(t)f(x)&\leq f(x)-\inf_{k\in H_R}\set{f(x-k)+\frac{1}{2t}\|k\|_{H_R}^2}\notag\\
&\leq f(x)-f(x-k_\eta)-\frac{1}{2t}\|k_\eta\|_{H_R}^2+\eta\notag\\
&\leq [f]_{R,\alpha}\|k_\eta\|_{H_R}^\alpha-\frac{1}{2t}\|k_\eta\|_{H_R}^2+\eta.\label{Regan}
\end{align}
From the above inequality we get the estimate
\(
\|k_\eta\|_{H_R}^2\leq 2t [f]_{R,\alpha}\|k_\eta\|_{H_R}^\alpha+2t \eta.
\)
The Young inequality yields, for every $c>0$,
\begin{align*}
\|k_\eta\|_{H_R}^2\leq \frac{\alpha}{2}c^{2/\alpha} \|k_\eta\|_{H_R}^2+\frac{2-\alpha}{2}\frac{1}{c^{2/(2-\alpha)}}  (2t [f]_{R,\alpha})^{2/(2-\alpha)}+2t \eta.
\end{align*}
Now taking $c=\alpha^{-\alpha/2}$ we get 
\begin{align}\label{Truman}
\|k_\eta\|_{H_R}^2\leq (2-\alpha)\alpha^{\alpha/(2-\alpha)}2^{2/(2-\alpha)}[f]_{R,\alpha}^{2/(2-\alpha)}t^{2/(2-\alpha)}+4t\eta.
\end{align}
Combining \eqref{Regan} and \eqref{Truman} we obtain
\begin{align*}
0\leq f(x)-S^R(t)f(x)&\leq [f]_{R,\alpha}\big((2-\alpha)\alpha^{\alpha/(2-\alpha)}2^{2/(2-\alpha)}[f]_{R,\alpha}^{2/(2-\alpha)}t^{2/(2-\alpha)}+4t\eta\big)^{\alpha/2}+\eta.
\end{align*}
Since the above estimate holds for every $\eta>0$, by choosing $\eta$ arbitrarily small, we get \eqref{LL_approximation}.

We conclude by proving \eqref{LL_derivative_estimate}. First notice that by \eqref{LLCD} for every $\sigma>0$ there exists $h_\sigma\in H_R$ such that
\begin{align*}
S^R(t)f(x)\leq \inf_{k\in H_R}\set{f(x+h_\sigma-k)+\frac{1}{2t}\|k\|_{H_R}^2}-\frac{1}{t}\|h_\sigma\|_{H_R}^2+\sigma.
\end{align*}
A straightforward calculation gives 
\begin{align*}
\frac{1}{t}\|h_\sigma\|_{H_R}^2\leq f(x)-S^R(t)f(x)+\sigma+\frac{1}{2t}\|h_\sigma\|_{H_R}^2.
\end{align*}
Thus from \eqref{LL_approximation} we obtain
\begin{align}\label{Discipline}
\|h_\sigma\|_{H_R}^2\leq 2 c_\alpha[f]_{R,\alpha}^{2/(2-\alpha)}t^{2/(2-\alpha)}+2t\sigma.
\end{align}
By \eqref{Discipline} we get
\begin{align*}
S^R(t)f(x+h)-S^R(t)f(x)&\leq \inf_{k\in H_R}\set{f(x+h+h_\sigma-k)+\frac{1}{2t}\|k\|_{H_R}^2}-\frac{1}{t}\|h_\sigma\|_{H_R}^2+\sigma\\
&\qquad -\inf_{k\in H_R}\set{f(x+h+h_\sigma-k)+\frac{1}{2t}\|k\|_{H_R}^2}+\frac{1}{t}\|h+h_\sigma\|_{H_R}^2\\
&=\frac{1}{t}\|h+h_\sigma\|_{H_R}^2-\frac{1}{t}\|h_\sigma\|_{H_R}^2+\sigma=\frac{1}{t}\|h\|_{H_R}^2+\frac{2}{t}\langle h,h_\sigma\rangle_{H_R}+\sigma\\
&\leq \frac{1}{t}\|h\|_{H_R}^2+\frac{2}{t}\|h\|_{H_R}(2 c_\alpha[f]_{R,\alpha}^{2/(2-\alpha)}t^{2/(2-\alpha)}+2t\sigma)^{1/2}+\sigma.
\end{align*}
Since the above inequalities hold for every $\sigma>0$ taking the infimum we get
\begin{align*}
S^R(t)f(x+h)-S^R(t)f(x)\leq \frac{1}{t}\|h\|_{H_R}^2+2\|h\|_{H_R}(2 c_\alpha[f]_{R,\alpha}^{2/(2-\alpha)})^{1/2}t^{(\alpha-1)/(2-\alpha)}.
\end{align*}
In a similar way we get
\begin{align*}
S^R(t)f(x+h)-S^R(t)f(x)\geq -\frac{1}{t}\|h\|_{H_R}^2-2\|h\|_{H_R}(2 c_\alpha[f]_{R,\alpha}^{2/(2-\alpha)})^{1/2}t^{(\alpha-1)/(2-\alpha)}.
\end{align*}
and so 
\begin{align}\label{aaaaaaa}
\abs{S^R(t)f(x+h)-S^R(t)f(x)}\leq \frac{1}{t}\|h\|_{H_R}^2+2\|h\|_{H_R}(2 c_\alpha[f]_{R,\alpha}^{2/(2-\alpha)})^{1/2}t^{(\alpha-1)/(2-\alpha)}.
\end{align}
By \eqref{aaaaaaa}, the mapping $x\mapsto S^R(t)f(x)$ verifies \eqref{R-lip} for every $h\in H_R$ such that $\norm{h}_{H_R}\leq 1$, instead, since $S^R(t)f$, if $\norm{h}_{H_R}> 1$ then
\[
|S^R(t)f(x+h)-S^R(t)f(x)|\leq 2\|S^R(t)f\|_\infty\leq 2\|S^R(t)f\|_\infty\norm{h}_{H_R},
\]
so the proof is concluded.
\end{proof}

\subsection{An interpolation result}
We have now all the ingredients to prove an interpolation result for the space ${\rm BUC}^{\alpha}_{R}(H)$. 
We shall use the $K$ method for real interpolation spaces (see \cite{Lun18,Tri95}). Let $\K_1$ and $\K_2$ be two Banach spaces, with norms $\norm{\cdot}_{\K_1}$ and $\norm{\cdot}_{\K_2}$, respectively. If $\K_2\subseteq \K_1$ with a continuous embedding, then for every $r>0$ and $x\in \K_1$ we define
\begin{align}
\label{K}
K(r,x):=\inf\set{\|a\|_{\K_1}+r\|b\|_{\K_2}\tc x=a+b,\ a\in \K_1,\ b\in \K_2}.
\end{align}
For any $\vartheta\in(0,1)$, we set
\begin{align}
\|x\|_{(\K_1,\K_2)_{\vartheta,\infty}}&:=\sup_{r>0}r^{-\vartheta}K(t,x);\label{defn_norm_interp}\\
(\K_1,\K_2)_{\vartheta,\infty}&:=\{x\in \K_1\,|\, \|x\|_{(\K_1,\K_2)_{\vartheta,\infty}}<+\infty\}.\notag
\end{align}
It is standard to show that $(\K_1,\K_2)_{\vartheta,\infty}$ endowed with the norm $\norm{\cdot}_{(\K_1,\K_2)_{\vartheta,\infty}}$ is a Banach space.  The following result can be found in \cite{CDP96-2} for the case $R={\rm Id}_{H}$ and a similar result can be found in \cite{BFS}, where the space ${\rm Lip}_{b,R}(H)$ is substituted by another space.

\begin{theorem}\label{intDS}
Let $\alpha\in(0,1)$. Up to an equivalent renorming, it holds 
\[
{\rm BUC}_R^\alpha(H)
=
({\rm BUC}(H),{\rm Lip}_{b,R}(H))_{\alpha,\infty}.
\]
\end{theorem}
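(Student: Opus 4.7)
The plan is to establish the two set-theoretic inclusions separately, with matching norm estimates. Throughout, let $K(r,f)$ be as in \eqref{K} with $\mathcal{K}_1={\rm BUC}(H)$ and $\mathcal{K}_2={\rm Lip}_{b,R}(H)$, noting the continuous embedding $\mathcal{K}_2\hookrightarrow \mathcal{K}_1$ with constant $1$.

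\textbf{Step 1: $({\rm BUC}(H),{\rm Lip}_{b,R}(H))_{\alpha,\infty}\hookrightarrow{\rm BUC}_R^\alpha(H)$.}
Take $f$ in the interpolation space. For any $x\in H$ and $v\in H\setminus\{0\}$, and for any decomposition $f=a+b$ with $a\in{\rm BUC}(H)$, $b\in{\rm Lip}_{b,R}(H)$, the bound
\[
|f(x+Rv)-f(x)|\le 2\|a\|_{{\rm BUC}(H)}+\|v\|_H\,\|b\|_{{\rm Lip}_{b,R}(H)}
\]
holds. Taking the infimum over admissible decompositions with $r=\|v\|_H/2$ yields $|f(x+Rv)-f(x)|\le 2 K(\|v\|_H/2,f)\le 2^{1-\alpha}\|v\|_H^\alpha\|f\|_{(\cdot)_{\alpha,\infty}}$, so $[f]_{R,\alpha}\le 2^{1-\alpha}\|f\|_{(\cdot)_{\alpha,\infty}}$. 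To bound $\|f\|_\infty$, pick any decomposition and estimate $\|f\|_\infty\le\|a\|_\infty+\|b\|_\infty\le\|a\|_{{\rm BUC}(H)}+\|b\|_{{\rm Lip}_{b,R}(H)}$, whence $\|f\|_\infty\le K(1,f)\le\|f\|_{(\cdot)_{\alpha,\infty}}$. Finally, $f$ is uniformly continuous because it is $R$-H\"older along the dense subspace $H_R$ when $\ker R=\{0\}$, and in any case because $f\in\mathcal{K}_1$ already by the embedding $(\mathcal{K}_1,\mathcal{K}_2)_{\alpha,\infty}\hookrightarrow \mathcal{K}_1$.

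\textbf{Step 2: ${\rm BUC}_R^\alpha(H)\hookrightarrow ({\rm BUC}(H),{\rm Lip}_{b,R}(H))_{\alpha,\infty}$.} This is where the Lasry--Lions machinery of Subsection \ref{Mourinho} is essential. Fix $f\in{\rm BUC}_R^\alpha(H)$. For any $t>0$ write the decomposition $f=(f-S^R(t)f)+S^R(t)f$, whose first summand lies in ${\rm BUC}(H)$ and whose second summand lies in ${\rm Lip}_{b,R}(H)$ by Proposition \ref{Lio} and the estimates \eqref{LL_limitatezza}--\eqref{LL_derivative_estimate}. These estimates give
\[
\|f-S^R(t)f\|_{{\rm BUC}(H)}\le c_\alpha[f]_{R,\alpha}^{2/(2-\alpha)}t^{\alpha/(2-\alpha)},\qquad \|S^R(t)f\|_{{\rm Lip}_{b,R}(H)}\le\|f\|_\infty+C_\alpha [f]_{R,\alpha}^{1/(2-\alpha)}t^{(\alpha-1)/(2-\alpha)}
\]
for a suitable constant $C_\alpha>0$. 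For $0<r\le 1$, choose $t=r^{2-\alpha}/[f]_{R,\alpha}$ (assuming $[f]_{R,\alpha}>0$; otherwise bound $K(r,f)$ directly by $r\|f\|_\infty$). The two exponents collapse so that both the first summand and the $r$-multiple of the second summand are controlled by $r^\alpha[f]_{R,\alpha}$, producing
\[
K(r,f)\le C'_\alpha [f]_{R,\alpha}\,r^\alpha+r\|f\|_\infty,\qquad 0<r\le 1,
\]
whence $r^{-\alpha}K(r,f)\le C'_\alpha[f]_{R,\alpha}+\|f\|_\infty$. For $r\ge 1$, use the trivial decomposition $f=f+0$, giving $K(r,f)\le\|f\|_\infty$ and $r^{-\alpha}K(r,f)\le\|f\|_\infty$. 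Taking the supremum over $r>0$ yields $\|f\|_{(\cdot)_{\alpha,\infty}}\le(C'_\alpha+1)\|f\|_{R,\alpha}$.

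\textbf{Main obstacle.} The delicate point is Step~2: one must choose the Lasry--Lions parameter $t$ as a precise power of $r$ in order to balance the two terms so that they both scale as $r^\alpha$, and simultaneously handle the regime $r\ge 1$ (where the trivial decomposition is used) and the degenerate case $[f]_{R,\alpha}=0$. The fine estimates \eqref{LL_approximation}--\eqref{LL_derivative_estimate} provided in Subsection \ref{Mourinho}, which do not require $\ker R=\{0\}$, are exactly what makes this balancing possible in the degenerate case not covered by \cite{BFS,CDP96}, consistent with Remark \ref{dege}.
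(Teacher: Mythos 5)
Your proposal is correct and follows essentially the same route as the paper: the first inclusion via a direct estimate on the $K$-functional evaluated at $r\sim\norm{v}_H$, and the second via the Lasry--Lions decomposition $f=(f-S^R(t)f)+S^R(t)f$ with $t$ chosen as a power of $r$ so that \eqref{LL_approximation} and \eqref{LL_derivative_estimate} both scale like $r^\alpha$, plus the trivial decomposition for $r\geq 1$. The only (harmless) differences are that you normalize $t=r^{2-\alpha}/[f]_{R,\alpha}$ where the paper takes $t=r^{2-\alpha}$, which gives you a cleaner explicitly linear norm bound, and that in the first step you take the infimum over decompositions directly rather than passing through the $t$-approximate decomposition and letting $t\to 0$.
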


\begin{proof}
We start by showing that $({\rm BUC}(H),{\rm Lip}_{b,R}(H))_{\alpha,\infty}\subseteq {\rm BUC}_R^\alpha(H)$.
For any element $\varphi\in ({\rm BUC}(H),{\rm Lip}_{b,R}(H))_{\alpha,\infty}$ and any $r,t>0$ there exist $f_{r,t}\in{\rm BUC}(H)$ and $g_{r,t}\in {\rm Lip}_{b,R}(H)$ such that 
\begin{align*}
\varphi(x)=f_{r,t}(x)+g_{r,t}(x),\qquad x\in H;
\end{align*}
and
\begin{align}\label{ceffo}
\|f_{r,t}\|_\infty+r\|g_{r,t}\|_{{\rm Lip}_{b,R}(H)}\leq r^\alpha\|\varphi\|_{({\rm BUC}(H),{\rm Lip}_{b,R}(H))_{\alpha,\infty}}+t.
\end{align}
By \eqref{ceffo}, for any $x,v\in H$ it holds
\begin{align*}
|\varphi &(x+Rv)-\varphi(x)|\leq 2\|f_{r,t}\|_\infty+|g_{r,t}(x+Rv)-g_{r,t}(x)|\leq 2\|f_{r,t}\|_\infty+[g_{r,t}]_R\|v\|_{H}\\
&\leq  2 r^\alpha \|\varphi\|_{({\rm BUC}(H),{\rm Lip}_{b,R}(H))_{\alpha,\infty}}+2t+r^{\alpha-1}\|\varphi\|_{({\rm BUC}(H),{\rm Lip}_{b,R}(H))_{\alpha,\infty}}\|v\|_{H}+\frac{t}{r}\|v\|_{H}.
\end{align*}
Now letting $t$ tend to zero and setting $r=\|v\|_{H}$ we get
\begin{align*}
|\varphi(x+h)-\varphi(x)|&\leq 3\|\varphi\|_{({\rm BUC}(H),{\rm Lip}_{b,R}(H))_{\alpha,\infty}}\|v\|_{H}^\alpha.
\end{align*}
This proves the continuous embedding $({\rm BUC}(H),{\rm Lip}_{b,R}(H))_{\alpha,\infty}\subseteq {\rm BUC}_R^\alpha(H)$.

To show that ${\rm BUC}_R^\alpha(H)\subseteq ({\rm BUC}(H),{\rm Lip}_{b,R}(H))_{\alpha,\infty}$, let $\varphi\in{\rm BUC}_R^\alpha(H)$. For every $t>0$ let $S^R(t)\varphi$ be the function defined in \eqref{LLCD}. For $r\in (0,1)$ we consider the functions $f_r:H\ra\R$ and $g_r:H\ra\R$ defined by
\begin{align*}
f_r(x):=\varphi(x)-S^R(r^{2-\alpha})\varphi(x),\qquad g_r(x):=S^R(r^{2-\alpha})\varphi(x),
\end{align*}
so that $\varphi=f_r+ g_r$ with $f_r \in{\rm BUC}_R(H)$ and $g_r \in {\rm Lip}_{b,R}(H)$ in virtue of Proposition \ref{Lio}.
By \eqref{LL_approximation} we get that there exists a constant $k_1=k_1(\alpha,\varphi)>0$ such that 
$\|f_r\|_\infty\leq k_1 r^\alpha$.
By \eqref{LL_limitatezza} and \eqref{LL_derivative_estimate}, there exist a constant $k_2=k_2(\alpha,\varphi)>0$ such that
\begin{align*}
\|g_r\|_{{\rm Lip}_{b,R}(H)}&=\|S^R(r^{2-\alpha})\varphi\|_\infty+[ S^R(r^{2-\alpha})\varphi]_R\leq k_2r^{\alpha-1}.
\end{align*}
Thus, bearing in mind \eqref{K}, for every $r\in(0,1)$ we get $K(r,\varphi)\leq (k_1+k_2)r^{\alpha}$.
Notice that the previous estimate is trivial if $r>1$. Keeping in mind \eqref{defn_norm_interp} we get the thesis.
\end{proof}
\begin{remark}\label{dege}
In the case $\ker R=\{0\}$ the results of Subsection \ref{Mourinho} were already proved in \cite{BFS} and \cite{CDP96}. Here we proved that the condition $\ker R=\{0\}$ is not necessary to ensure that the Lasry--Lions approximants defined in \eqref{LLCD} have sufficient regularity to prove the interpolation result stated in Theorem \ref{intDS}
\end{remark}


%
%
%
%

\appendix
\section{Malliavin calculus in an abstract framework}
\label{Malliavin_abstract_sec}
Malliavin calculus is named after P. Malliavin who first introduced this tool with his seminal work \cite{Mal78} (see also \cite{Malliavin}).
There he laid the foundations of what is now known as the ``Malliavin calculus'', an infinite-dimensional differential calculus in a Gaussian framework, and used it to give a probabilistic proof  of H\"ormander theorem. This new calculus proved to be extremely successful and soon a number of authors studied variants and simplifications, see e.g. \cite{Bis1,Bis2,GT,KS84,KS85,KS87,Nualart,Shi1,Shi2,Stroock,W84,Zakai}.

The general context consists of a probability space $(\Omega, \mathcal{F}, \mathbb{P})$ and a Gaussian separable Hilbert space $\mathcal{H}_1$, that is a closed subspace of $L^2(\Omega,\mathcal{F}, \mathbb{P})$ consisting of centered Gaussian random variables. The space $\mathcal{H}_1$ (also known as the first Wiener Chaos) induces an orthogonal decomposition, known as the Wiener Chaos Decomposition, of the corresponding $L^2(\Omega,\sigma(\mathcal{H}_1),\mathbb{P})$ space of square integrable random variables that are measurable with respect to the $\sigma$-field generated by $\mathcal{H}_1$. To characterize elements in $\mathcal{H}_1$ it is useful to fix a separable Hilbert space $\mathcal{H}$ and consider a unitary operator between the two spaces.
In this abstract setting one can introduce the notion of Malliavin derivative, that is the derivative $D\varphi$ of a square integrable random variable $\varphi: \Omega \rightarrow \mathbb{R}$, measurable with respect to $\sigma(\mathcal{H}_1)$. Heuristically one differentiates $\varphi$ with respect to $\omega \in \Omega$. 

Usually $\Omega$ is a linear topological space and the Malliavin derivative operator can be introduced as a differential operator (see Section \ref{Mal_top_sec}). Nevertheless, as done for instance in \cite{Nualart}, it is possible to introduce a notion of Malliavin derivative without assuming any topological or linear structure on the probability space $\Omega$. This approach proves to be particularly flexible and useful in several applications; 
moreover, it is general enough to admit as special cases the definitions of Malliavin derivative given in probability spaces with a linear topological structure, as explained in details in Section \ref{Mal_top_sec}.
It is worth mentioning that, in quantum probability theory, there are connections with Malliavin calculus as well. For example, in the general framework of Fock spaces, the so-called annihilation operator can be interpreted as a Malliavin derivative, as discussed in \cite{Mey2006}. Moreover, for a definition of the Malliavin derivative on non-commutative spaces, we refer to \cite{Fra1}.

We point out here that it would be more accurate to speak of \textit{a} (choice of) Malliavin derivative rather than \textit{the} Malliavin derivative. In fact, given $(\Omega, \mathcal{F}, \mathbb{P})$ and the Gaussian Hilbert spaces $\mathcal{H}_1$,
one can construct infinitely many different Malliavin derivative operators. 
On the other hand, it turns out that all these Malliavin derivatives have the same domain when the Gaussian Hilbert space $\mathcal{H}_1$ is the same. This is showed in details in a concrete situation in Section \ref{Mal_top_sec}: there we provide two different (among the infinitely many) examples of Malliavin derivatives on a Wiener space having the same domain. 

In this Section we briefly recall the construction of the Malliavin derivative in the abstract framework described above and collect some results. We mainly refer to \cite{Janson,NP,Nualart,TZ}.

\subsection{Gaussian Hilbert spaces}

Let $(\Omega, \mathcal{F}, \mathbb{P})$ be a probability space, we denote by $\mathbb{E}$ the expectation under $\mathbb{P}$. Let $\mathcal{H}$ be a real separable Hilbert space with inner product $\langle \cdot, \cdot\rangle _{\mathcal{H}}$ and corresponding norm $\norm{\cdot}_{\mathcal{H}}$.

\begin{definition}
\label{Gaussian_space} 
A Gaussian linear space is a real linear space of random variables, defined on $(\Omega, \mathcal{F}, \mathbb{P})$, such that each variable in the space is centered and Gaussian. 
A Gaussian Hilbert space is a Gaussian linear space which is complete, i.e. a closed subspace of $L^2(\Omega,\mathcal{F}, \mathbb{P})$ consisting of centered Gaussian random variables. We denote it by $\mathcal{H}_1$.
\end{definition}

We recall that a linear isometry between Hilbert spaces is a linear map that preserves the inner product. Linear isometries that are onto are called unitary operators.

\begin{proposition}
\label{Gaussian_isometries}
Let $\mathcal{H}$ be a 
Hilbert space. There exists a Gaussian Hilbert space $\mathcal{H}_1$ (with the same dimension of $\mathcal{H}$) and a unitary operator $h \mapsto W(h)$ of $\mathcal{H}$ onto  $\mathcal{H}_1$. That is, $\mathcal{H}_1=\{W(h)\,|\, h \in \mathcal{H}\}$ and  for any $h, k \in \mathcal{H}$,
\begin{equation*}
\mathbb{E}\left[W(h)W(k) \right]= \langle h, k \rangle_{\mathcal{H}}.
 \end{equation*}
\end{proposition}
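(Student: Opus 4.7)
The plan is to realize $\mathcal{H}_1$ concretely as the closed linear span in $L^2(\Omega, \mathcal{F}, \mathbb{P})$ of a countable family of i.i.d.\ standard Gaussian random variables indexed by an orthonormal basis of $\mathcal{H}$, and to define $W$ by isometric extension.

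First I would fix an orthonormal basis $\{e_n\}_{n\in\mathbb{N}}$ of $\mathcal{H}$ (using separability) and choose as underlying probability space the product $(\mathbb{R}^{\mathbb{N}}, \mathcal{B}(\mathbb{R}^{\mathbb{N}}), \mu^{\otimes \mathbb{N}})$, where $\mu$ is the standard Gaussian measure on $\mathbb{R}$. Denote by $\xi_n:\Omega\to\mathbb{R}$ the $n$-th coordinate projection, so that $\{\xi_n\}_{n\in\mathbb{N}}$ is a sequence of i.i.d.\ $N(0,1)$ random variables. For $h\in\mathcal{H}$ expanded as $h=\sum_{n} a_n e_n$ with $a_n=\langle h,e_n\rangle_{\mathcal{H}}$, I would then set
\[
W(h):=\sum_{n=1}^{+\infty} a_n \xi_n,
\]
where the convergence is in $L^2(\Omega,\mathbb{P})$: the partial sums $S_N=\sum_{n=1}^{N} a_n \xi_n$ satisfy $\mathbb{E}[(S_N-S_M)^2]=\sum_{n=M+1}^{N}a_n^2\to 0$ as $M,N\to+\infty$, by Parseval's identity.

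Next I would check the required properties. Linearity of $W$ is immediate from the definition and the linearity of $L^2$-limits. The isometry property follows from $\mathbb{E}[W(h)^2]=\sum_n a_n^2 = \|h\|_{\mathcal{H}}^2$, and the polarization identity then gives $\mathbb{E}[W(h)W(k)]=\langle h,k\rangle_{\mathcal{H}}$. Setting $\mathcal{H}_1:=\{W(h)\,|\,h\in\mathcal{H}\}$, linearity of $W$ makes $\mathcal{H}_1$ a linear subspace of $L^2(\Omega,\mathbb{P})$, and the isometry property makes $\mathcal{H}_1$ closed: if $W(h_n)\to X$ in $L^2$, then $\{h_n\}$ is Cauchy in $\mathcal{H}$, converging to some $h\in\mathcal{H}$ with $W(h)=X$. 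By construction, $W$ is therefore a linear isometry of $\mathcal{H}$ onto $\mathcal{H}_1$, i.e.\ a unitary operator, and the two spaces have the same dimension.

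The one delicate step, which I view as the main obstacle, is verifying that each $W(h)$ is genuinely a centered Gaussian random variable so that $\mathcal{H}_1$ qualifies as a Gaussian Hilbert space in the sense of Definition \ref{Gaussian_space}. For finite linear combinations $S_N=\sum_{n=1}^{N}a_n\xi_n$ this is clear, since independent Gaussians sum to a Gaussian: $S_N\sim N\bigl(0,\sum_{n=1}^N a_n^2\bigr)$. For the general case, I would invoke the fact that the class of centered Gaussian random variables is closed under $L^2$-convergence. This follows from the pointwise convergence of characteristic functions: since $S_N\to W(h)$ in $L^2$, one has $\sum_{n=1}^N a_n^2 \to \|h\|_{\mathcal{H}}^2$ and
\[
\mathbb{E}\bigl[e^{itW(h)}\bigr]=\lim_{N\to+\infty}\mathbb{E}\bigl[e^{itS_N}\bigr]=\lim_{N\to+\infty}\exp\Bigl(-\tfrac{t^2}{2}\sum_{n=1}^N a_n^2\Bigr)=\exp\Bigl(-\tfrac{t^2}{2}\|h\|_{\mathcal{H}}^2\Bigr),
\]
so $W(h)\sim N(0,\|h\|_{\mathcal{H}}^2)$. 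This finishes the verification and produces the required unitary operator.
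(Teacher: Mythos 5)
Your proposal is correct and follows essentially the same route as the paper's proof: fix an orthonormal basis of $\mathcal{H}$, take an i.i.d.\ family of standard Gaussians, and define $W(h)=\sum_n\langle h,e_n\rangle_{\mathcal{H}}\xi_n$. You are in fact more careful than the paper on two points it passes over silently --- the $L^2$-convergence of the series and the preservation of Gaussianity under $L^2$-limits via characteristic functions --- so nothing is missing.
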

\begin{proof}
Let $\{e_i\}_{i \in I}$ be an orthonormal basis of $\mathcal{H}$. 
Let $\{\xi_i\}_{i \in I}$ be a collection of independent standard Gaussian random variables, defined on some probability space $(\Omega, \mathcal{F},\mathbb{P})$
. Every element $h \in \mathcal{H}$ can be uniquely written as $h=\sum_{i \in I} \langle h, e_i\rangle_{\mathcal{H}}e_i$. We introduce the mapping $\mathcal{H} \ni h \mapsto W(h):= \sum_{i \in I}\langle h, e_i\rangle_{\mathcal{H}} \xi_i$. 
By construction, the random variable $W(h)$ is Gaussian. Moreover, since the $\xi_i$ are independent, centered and have unit variance, $W(h)$ is centered and, for any $h, k \in \mathcal{H}$, it holds
\begin{equation*}\mathbb{E}\left[W(h)W(k) \right]=\mathbb{E} \left[ \sum_{i \in I}\langle h, e_i\rangle_{\mathcal{H}} \xi_i \sum_{j \in I}\langle k, e_j\rangle_{\mathcal{H}} \xi_j \right]=\sum_{i \in I}\langle h, e_i\rangle_{\mathcal{H}}\langle k, e_i\rangle_{\mathcal{H}}= \langle h, k \rangle_{\mathcal{H}}.
\end{equation*}
This entails that $W$ is a unitary operator of $\mathcal{H}$ onto the Gaussian Hilbert space $\mathcal{H}_1:=\{W(h) \,|\, h \in \mathcal{H}\}$, and concludes the proof.
 \end{proof}


In \cite{Nualart} the unitary operator $W$, introduced in Proposition \ref{Gaussian_isometries}, is called isonormal Gaussian processes.
The role of the space $\mathcal{H}$ and the operator $W$, in the above result, is to suitable index the elements in $\mathcal{H}_1$. We point out that, fixed a generic Gaussian Hilbert space $\mathcal{H}_1$, there are infinitely many possible choices of real Hilbert spaces $\mathcal{H}$ (with the same dimension as $\mathcal{H}_1$) and unitary operators $W$ such that $\mathcal{H}_1=\{W(h) \,|\, h \in \mathcal{H}\}$. For instance, since $\mathcal{H}_1$ is itself a real Hilbert space (with respect to the usual $L^2(\Omega,\mathcal{F},\mathbb{P})$ inner product), it follows that $\mathcal{H}_1$ can be represented by choosing $\mathcal{H}$ equal to $\mathcal{H}_1$ itself and $W$ equal to the identity operator. In general, given an Hilbert space $\mathcal{H}$, there are infinitely many different ways of choosing an orthonormal basis $\{e_i\}_{i \in I}$ in $\mathcal{H}$ and an orthonormal basis $\{\xi_i\}_{i \in I}$ in $\mathcal{H}_1$, each choice giving a \emph{different} unitary operator $W$ of the form $W(h)=\sum_{i \in I}\langle h, e_i\rangle_{\mathcal{H}}\xi_i$.
The subtlety in the use of Proposition \ref{Gaussian_isometries}, is that one has to select an Hilbert space $\mathcal{H}$ and a unitary operator $W$
that are well adapted to the specific problem at hand. 

\subsection{Wiener Chaos Decomposition}

Every Gaussian Hilbert space induces an orthogonal decomposition, known as the Wiener Chaos Decomposition, of the corresponding $L^2(\Omega,\sigma(\mathcal{H}_1),\mathbb{P})$ space of square integrable random variables that are measurable with respect to the $\sigma$-field generated by the Gaussian Hilbert space, that we denote by $\sigma(\mathcal{H}_1)$. For $n \ge 0$ we introduce the linear space
\begin{equation*}
\mathcal{P}_n(\mathcal{H}_1):= \left\{p(\xi_1,\ldots,\xi_m)\,|\, \ p \text{ is a polynomial of degrees $\le n$}, \ \xi_1,\ldots,\xi_m \in \mathcal{H}_1, \ m\in\N \right\}.
\end{equation*}
Let $\overline{\mathcal{P}_n(\mathcal{H}_1)}$ be the closure of $\mathcal{P}_n(\mathcal{H}_1)$ in $L^2(\Omega,\sigma(\mathcal{H}_1),\mathbb{P})$.
For $n \ge 0$ the space
\begin{equation*}
\mathcal{H}_n:= \overline{\mathcal{P}_n(\mathcal{H}_1)} \ominus  \overline{\mathcal{P}_{n-1}(\mathcal{H}_1)} =  \overline{\mathcal{P}_n(\mathcal{H}_1)} \cap  \overline{\mathcal{P}_{n-1}(\mathcal{H}_1)}^{\perp}
\end{equation*}
is called $n$-th Wiener Chaos (associated to $\mathcal{H}_1$). We remark that $\mathcal{H}_0=\R$. 
The following result is usually called Wiener chaos decomposition, its proof can be found in \cite[Theorem 2.6]{Janson}.

\begin{theorem}\label{WCD}
The spaces $\mathcal{H}_n$, $n \ge 0$, are mutually orthogonal, closed subspaces of $L^2(\Omega, \mathcal{F}, \mathbb{P})$ and 
\begin{equation*}
L^2(\Omega, \sigma(\mathcal{H}_1), \mathbb{P})=\bigoplus_{n=0}^\infty \mathcal{H}_n.
\end{equation*}
\end{theorem}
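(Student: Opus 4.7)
The plan is to split the statement into three ingredients—closedness, mutual orthogonality, and the total span equality—and handle each in turn. The first two are essentially formal. By construction $\mathcal{H}_n = \overline{\mathcal{P}_n(\mathcal{H}_1)} \cap \overline{\mathcal{P}_{n-1}(\mathcal{H}_1)}^\perp$ is the intersection of two closed subspaces of $L^2(\Omega,\mathcal{F},\mathbb{P})$, and hence is itself closed. For orthogonality, the chain $\overline{\mathcal{P}_0(\mathcal{H}_1)} \subseteq \overline{\mathcal{P}_1(\mathcal{H}_1)} \subseteq \cdots$ is increasing, so for $m<n$ one has $\mathcal{H}_m \subseteq \overline{\mathcal{P}_m(\mathcal{H}_1)} \subseteq \overline{\mathcal{P}_{n-1}(\mathcal{H}_1)}$, while $\mathcal{H}_n \perp \overline{\mathcal{P}_{n-1}(\mathcal{H}_1)}$ by definition; this yields $\mathcal{H}_m \perp \mathcal{H}_n$.

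Next I would telescope. A straightforward induction on $N$ using the orthogonal splitting $\overline{\mathcal{P}_N(\mathcal{H}_1)} = \overline{\mathcal{P}_{N-1}(\mathcal{H}_1)} \oplus \mathcal{H}_N$ gives
\[
\bigoplus_{n=0}^N \mathcal{H}_n = \overline{\mathcal{P}_N(\mathcal{H}_1)}, \qquad \text{hence} \qquad \bigoplus_{n=0}^\infty \mathcal{H}_n = \overline{\bigcup_{N\geq 0} \mathcal{P}_N(\mathcal{H}_1)}.
\]
The theorem therefore reduces to the assertion that the algebra of polynomials in elements of $\mathcal{H}_1$ is dense in $L^2(\Omega,\sigma(\mathcal{H}_1),\mathbb{P})$. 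This density is the main obstacle.

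To prove it, I would argue by contradiction: suppose $F \in L^2(\Omega,\sigma(\mathcal{H}_1),\mathbb{P})$ satisfies $\mathbb{E}[F\, p(\xi_1,\ldots,\xi_m)] = 0$ for every polynomial $p$ and every finite tuple $\xi_1,\ldots,\xi_m \in \mathcal{H}_1$, and I want to deduce $F=0$. Fixing such a tuple and $t=(t_1,\ldots,t_m) \in \mathbb{R}^m$, the goal is to show
\[
\mathbb{E}\Big[F \exp\Big(i \sum_{j=1}^m t_j \xi_j\Big)\Big] = 0.
\]
One obtains this by expanding the exponential in its Taylor series and interchanging sum and expectation; this interchange is justified because $\sum_j t_j \xi_j$ is Gaussian, so its partial Taylor sums are dominated in $L^2(\Omega,\mathbb{P})$ by a fixed integrable majorant, allowing termwise use of the orthogonality hypothesis. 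Injectivity of the characteristic function then gives $\mathbb{E}[F \mid \sigma(\xi_1,\ldots,\xi_m)] = 0$ almost surely. Finally, running the $\xi_j$ over a countable dense subset of $\mathcal{H}_1$, the $\sigma$-fields $\sigma(\xi_1,\ldots,\xi_m)$ increase to $\sigma(\mathcal{H}_1)$, and Doob's martingale convergence theorem upgrades the previous identity to $F = \mathbb{E}[F\mid \sigma(\mathcal{H}_1)] = 0$.

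The delicate point is ensuring the interchange of series and expectation in the exponential expansion; once this is secured, everything else is soft. A slightly different route, perhaps cleaner, would be to first treat a finite tuple: since $(\xi_1,\ldots,\xi_m)$ is jointly Gaussian, Hermite polynomials form an orthonormal basis of $L^2(\mathbb{R}^m,\gamma_m)$, so every element of $L^2(\Omega,\sigma(\xi_1,\ldots,\xi_m),\mathbb{P})$ is an $L^2$-limit of polynomials in $\xi_1,\ldots,\xi_m$; a monotone class argument combined with the martingale convergence step above then promotes the approximation to all of $L^2(\Omega,\sigma(\mathcal{H}_1),\mathbb{P})$.
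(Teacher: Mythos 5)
Your proof is correct. The paper itself does not prove Theorem \ref{WCD} but defers to \cite[Theorem 2.6]{Janson}, and your argument---reducing the statement to density of the polynomial algebra via the telescoping $\overline{\mathcal{P}_N(\mathcal{H}_1)}=\overline{\mathcal{P}_{N-1}(\mathcal{H}_1)}\oplus\mathcal{H}_N$, then killing an orthogonal element by the dominated Taylor expansion of $e^{i\sum_j t_j\xi_j}$, injectivity of the Fourier transform, and martingale convergence along $\sigma(\xi_1,\ldots,\xi_m)\uparrow\sigma(\mathcal{H}_1)$---is essentially the standard proof given there.
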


For $n \ge 0$ let us denote by $J_n$ the orthogonal projection of $L^2(\Omega,\sigma(\mathcal{H}_1),\mathbb{P})$ onto $\mathcal{H}_n$; in particular, $J_0(X)=\mathbb{E}\left[X\right]$. Theorem \ref{WCD} yields that every random variable $X \in L^2(\Omega, \sigma(\mathcal{H}_1), \mathbb{P})$ admits the unique expansion 
\begin{equation*}
X=\sum_{n=0}^{+\infty} J_n(X)=\mathbb{E}\left[X\right] + \sum_{n=1}^{+\infty} J_n(X),
\end{equation*}
with the series converging in $L^2(\Omega, \sigma(\mathcal{H}_1),\mathbb{P})$.


\subsection{Malliavin derivative operators and Sobolev spaces}
From here on we fix a probability space $(\Omega, \mathcal{F}, \mathbb{P})$ and an infinite dimensional separable Gaussian Hilbert space $\mathcal{H}_1$. We assume $\mathcal{F}$ to be the $\sigma$-field generated by $\mathcal{H}_1$.
Moreover, according to Proposition \ref{Gaussian_isometries}, we fix a separable Hilbert space $\mathcal{H}$ and a unitary operator
\begin{equation*}
W:\mathcal{H} \rightarrow \mathcal{H}_1\subseteq L^2(\Omega, \sigma(\mathcal{H}_1), \mathbb{P}),
\end{equation*}
so that we characterize 
\begin{equation*}
\mathcal{H}_1=\{W(h)\,|\, h \in \mathcal{H}\},
\end{equation*}
and every $W(h)\in \mathcal{H}_1$ is a centered Gaussian random variable with variance 
\[\|W(h)\|^2_{L^2(\Omega, \sigma(\mathcal{H}_1), \mathbb{P})}=\|h\|^2_{\mathcal{H}}.\] 
Let us denote by $\mathcal{S}(\mathcal{H}_1)$ the set of smooth random variables, i.e. random variables of the form 
\begin{equation}\label{MariaClara}
F=f(W(h_1),\ldots,W(h_m))
\end{equation}
for some $m \ge 1$ and $h_1,\ldots,h_m \in \mathcal{H}$, where $f$ is a $C^\infty(\mathbb{R}^m)$ function such that $f$ and all its partial derivatives have at most polynomial growth.

\begin{definition}\label{defDpol}
The derivative of a random variable $F \in \mathcal{S}(\mathcal{H}_1)$ of the form \eqref{MariaClara} is the $\mathcal{H}$-valued random variable
\begin{equation*}
DF=\sum_{i=1}^m \frac{\partial f}{\partial x_i} (W(h_1),\ldots,W(h_m)) h_i.
\end{equation*}
\end{definition}

The space $\mathcal{S}(\mathcal{H}_1)$ turns out to be dense $L^p(\Omega, \sigma(\mathcal{H}_1), \mathbb{P})$ for any $p \in [1, +\infty)$, see e.g. \cite[Lemma 3.2.1]{NP}. This, along with the following integration by parts formula (see e.g. \cite[Lemma 1.2.1]{Nualart}):
\begin{equation}
\label{ibp_0}
\mathbb{E}\left[ \langle DF, h \rangle_{\mathcal{H}}\right]=\mathbb{E}\left [W(h)F\right], \qquad  h\in \mathcal{H}, \ F \in \mathcal{S}(\mathcal{H}_1),
\end{equation}
is the crucial ingredient to extend the class of differential random variables to a larger class. For a proof of the following proposition see \cite[Proposition 2.3.4]{NP}.

\begin{proposition}
\label{closableProp}
For any $p \in [1, +\infty)$ the operator 
\[D: \mathcal{S}(\mathcal{H}_1)\subseteq L^p(\Omega, \sigma(\mathcal{H}_1),\mathbb{P}) \rightarrow L^p(\Omega, \sigma(\mathcal{H}_1),\mathbb{P}; \mathcal{H}),\] 
introduced in Definition \ref{defDpol}, is closable as an operator from the space $L^p(\Omega, \sigma(\mathcal{H}_1),\mathbb{P})$ to the space $L^p(\Omega, \sigma(\mathcal{H}_1),\mathbb{P}; \mathcal{H})$.
\end{proposition}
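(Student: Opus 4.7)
The plan is the standard closability argument for the Malliavin derivative: show that whenever a sequence $(F_n)_{n\in\N}\subseteq \mathcal{S}(\mathcal{H}_1)$ satisfies $F_n\to 0$ in $L^p(\Omega,\sigma(\mathcal{H}_1),\mathbb{P})$ and $DF_n\to \eta$ in $L^p(\Omega,\sigma(\mathcal{H}_1),\mathbb{P};\mathcal{H})$, then $\eta=0$ almost surely.

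First, I would upgrade the integration by parts formula \eqref{ibp_0} to its ``product'' version. Observe that $\mathcal{S}(\mathcal{H}_1)$ is an algebra (the product of two smooth cylindrical functions is again smooth cylindrical) and that $D$ acts on it as a derivation, i.e.\ $D(FG)=F\,DG+G\,DF$ for $F,G\in\mathcal{S}(\mathcal{H}_1)$; this follows directly from Definition \ref{defDpol} and the classical Leibniz rule in $\R^m$. Applying \eqref{ibp_0} to $FG\in\mathcal{S}(\mathcal{H}_1)$ then gives, for every $h\in\mathcal{H}$,
\[
\E\sq{G\,\scal{DF}{h}_{\mathcal{H}}}
=\E\sq{FG\,W(h)}-\E\sq{F\,\scal{DG}{h}_{\mathcal{H}}}.
\]

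Next, I would choose a convenient class of test variables $G$. For a fixed $h\in\mathcal{H}$, take $G\in\mathcal{S}(\mathcal{H}_1)$ of the form $G=\phi(W(h_1),\ldots,W(h_m))$ with $\phi\in C_c^\infty(\R^m)$, so that $G$ and $DG$ are uniformly bounded and $GW(h)\in L^\infty(\Omega)$ as well; this guarantees the right-hand side is controlled by $\norm{F}_{L^p}$ for every $p\in[1,+\infty)$ (which is precisely the delicate point when $p=1$, since $W(h)$ itself is unbounded). Inserting $F=F_n$ in the identity above and letting $n\to+\infty$, the left-hand side converges to $\E[G\scal{\eta}{h}_{\mathcal{H}}]$ by the $L^p$-convergence of $DF_n$ to $\eta$ (pair $G$, which is bounded, with $\scal{DF_n}{h}_{\mathcal{H}}$ in $L^p$), while both terms on the right-hand side vanish because $F_n\to 0$ in $L^p$ and $GW(h)$, $\scal{DG}{h}_{\mathcal{H}}$ belong to $L^{p'}$ (in fact to $L^\infty$ by the compact support of $\phi$). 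Hence $\E[G\scal{\eta}{h}_{\mathcal{H}}]=0$ for every such $G$.

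To conclude, I would use that the class of such cylindrical test variables is dense in $L^q(\Omega,\sigma(\mathcal{H}_1),\mathbb{P})$ for any $q\in[1,+\infty)$ (since the $\sigma$-algebra $\sigma(\mathcal{H}_1)$ is generated by variables of the form $W(h)$, a truncation/mollification argument allows approximating indicators of measurable sets of finite measure by bounded cylindrical $\phi$'s). Thus $\scal{\eta}{h}_{\mathcal{H}}=0$ $\mathbb{P}$-a.s.\ for every fixed $h\in\mathcal{H}$. Since $\mathcal{H}$ is separable, fixing a countable dense subset $\{h_k\}_{k\in\N}\subseteq\mathcal{H}$ and taking a common null set on which $\scal{\eta}{h_k}_{\mathcal{H}}=0$ for all $k$, one gets $\eta=0$ $\mathbb{P}$-a.s., which proves closability.

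The main obstacle I anticipate is the case $p=1$: the standard ``multiply by a bounded test'' trick requires $GW(h)\in L^\infty$, which forces the use of cylindrical $\phi\in C_c^\infty(\R^m)$ rather than just $\phi\in C_b^\infty(\R^m)$, together with a density argument tailored to $L^1$. Once this class is fixed, the passage to the limit and the separability argument are routine.
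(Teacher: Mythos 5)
The paper does not prove this proposition itself: it simply cites \cite[Proposition 2.3.4]{NP}, and your argument is essentially the standard closability proof found there (and in \cite[Proposition 1.2.1]{Nualart}): test $\E\sq{G\scal{DF_n}{h}_{\mathcal H}}$ against a suitable class of bounded cylindrical $G$, use the product-rule form of \eqref{ibp_0} to move the derivative off $F_n$, let $n\to+\infty$, and conclude by density and separability of $\mathcal H$. The overall structure, including the correct identification of $p=1$ as the delicate case, is sound.

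There is, however, one assertion that is false as written and is exactly at the delicate point. For $G=\phi(W(h_1),\ldots,W(h_m))$ with $\phi\in C_c^\infty(\R^m)$ and an \emph{arbitrary} $h\in\mathcal H$, it is not true that $GW(h)\in L^\infty(\Omega)$: the compact support of $\phi$ only confines $(W(h_1),\ldots,W(h_m))$, and if $h$ is not in $\Span\{h_1,\ldots,h_m\}$ (e.g.\ $h\perp h_i$ for all $i$, so that $W(h)$ is independent of these variables), then $W(h)$ is unbounded on the event $\{G\neq 0\}$ and $GW(h)$ is merely in every $L^q$, $q<\infty$. This breaks the estimate $\abs{\E\sq{F_nGW(h)}}\le\norm{F_n}_{L^1}\norm{GW(h)}_{L^\infty}$ that you need for $p=1$. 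The fix is standard and cheap: for the fixed $h$, restrict to test variables whose coordinates include $h$, i.e.\ $G=\phi(W(h),W(h_2),\ldots,W(h_m))$ with $\phi\in C_c^\infty$, so that $W(h)$ is genuinely bounded on $\{G\neq0\}$; this subclass still generates $\sigma(\mathcal H_1)$ and is separating in the sense needed for your final monotone-class/density step. (Alternatively one uses Nualart's device of multiplying a bounded cylindrical $\tilde G$ by $e^{-\eps W(h)^2}$.) With this correction the rest of your argument --- the passage to the limit, the conclusion $\scal{\eta}{h}_{\mathcal H}=0$ $\qc$ for each $h$, and the countable dense subset of $\mathcal H$ --- goes through as you describe.
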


For any $p \in [1, +\infty)$ we denote with $\mathbb{D}^{1,p}$ the closure of $\mathcal{S}(\mathcal{H}_1)$ with respect to the norm
\begin{equation}
\label{norm_D^1p}
\|F\|^p_{\mathbb{D}^{1,p}} =\mathbb{E}\left[|F|^p\right]+\mathbb{E}\left[\|DF\|^p_{\mathcal{H}}\right].
\end{equation} 
According to Proposition \ref{closableProp} the operator $D$ 
admits a closed extension (still denoted by $D$) with domain $\mathbb{D}^{1,p}$. We call this extension 
\emph{Malliavin derivative} and we call $\mathbb{D}^{1,p}$ the \emph{domain of $D$} in $L^p(\Omega, \sigma(\mathcal{H}_1), \mathbb{P})$.
For any $p \in [1, +\infty)$ the space $\mathbb{D}^{1,p}$ endowed with the norm \eqref{norm_D^1p} is a Banach space, for $p=2$ the space $\mathbb{D}^{1,2}$ is a Hilbert space with the inner product 
\begin{equation*}
\langle F, G\rangle_{\mathbb{D}^{1,2}}= \mathbb{E}\left[FG\right] + \mathbb{E}\left[ \langle DF, DG\rangle_{\mathcal{H}}\right].
\end{equation*}
It is not difficult to prove that the integration by parts formula \eqref{ibp_0} extends to elements in $\mathbb{D}^{1,2}$, that is 
\begin{equation}
\label{ibp_1}
\mathbb{E}\left[ \langle DF, h \rangle_{\mathcal{H}}\right]=\mathbb{E}\left [W(h)F\right], \qquad h\in \mathcal{H}, \ F \in \mathbb{D}^{1,2}.
\end{equation}

The space $\mathbb{D}^{1,2}$ is characterized in the following proposition, in terms of the Wiener chaos expansion (see \cite[Proposition 1.2.2]{Nualart}). 
\begin{proposition}
\label{charD12}
Let $F \in L^2(\Omega, \sigma(\mathcal{H}_1), \mathbb{P})$ with Wiener chaos expansion $F=\sum_{n=0}^\infty J_n(F)$. Then $F \in \mathbb{D}^{1,2}$ if, and only if, 
\begin{equation*}
\mathbb{E}\left[ \|DF\|^2_{\mathcal{H}}\right] = \sum_{n=1}^\infty n\|J_n(F)\|_{L^2(\Omega)}^2 <\infty.
\end{equation*}
\end{proposition}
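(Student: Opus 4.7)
The plan is to reduce the theorem to two facts about how $D$ acts on each Wiener chaos: (a) $\mathcal{H}_n\subseteq\mathbb{D}^{1,2}$ with $\mathbb{E}[\|DF\|^2_\mathcal{H}]=n\|F\|^2_{L^2(\Omega)}$ for every $F\in\mathcal{H}_n$, and (b) for $F_m\in\mathcal{H}_m\cap\mathbb{D}^{1,2}$ and $F_n\in\mathcal{H}_n\cap\mathbb{D}^{1,2}$ with $m\neq n$, the derivatives $DF_m$ and $DF_n$ are orthogonal in $L^2(\Omega;\mathcal{H})$. Granted (a) and (b), the conclusion will follow by a Pythagoras-type argument applied to the partial sums $F_N:=\sum_{n=0}^N J_nF$.

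To establish (a) and (b) I will first carry out the computation on variables of Hermite type. For $h\in\mathcal{H}$ with $\|h\|_\mathcal{H}=1$ and $n\ge 1$, let $\mathrm{He}_n$ denote the $n$-th Hermite polynomial; then $\mathrm{He}_n(W(h))\in\mathcal{H}_n$ with $L^2$-norm squared equal to $n!$. By Definition \ref{defDpol} and the relation $\mathrm{He}_n'=n\,\mathrm{He}_{n-1}$, one gets $D\mathrm{He}_n(W(h))=n\,\mathrm{He}_{n-1}(W(h))\,h$, so $\mathbb{E}[\|D\mathrm{He}_n(W(h))\|^2_\mathcal{H}]=n^2(n-1)!=n\cdot n!$. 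Passing to finite linear combinations and using the identity $\mathbb{E}[\mathrm{He}_n(W(h))\mathrm{He}_n(W(h'))]=n!\langle h,h'\rangle_\mathcal{H}^n$, the equality $\|DF\|^2_{L^2(\Omega;\mathcal{H})}=n\|F\|^2_{L^2(\Omega)}$ is preserved; since such combinations are dense in $\mathcal{H}_n$ (by polarization of symmetric rank-one tensors, or equivalently by the multiple Wiener integral representation), the closability provided by Proposition \ref{closableProp} yields (a) via a standard Cauchy argument. The same computation also shows $\langle D\mathrm{He}_n(W(h)),k\rangle_\mathcal{H}=n\langle h,k\rangle_\mathcal{H}\mathrm{He}_{n-1}(W(h))\in\mathcal{H}_{n-1}$ for every $k\in\mathcal{H}$; by linearity, density and (a), $\langle DF,k\rangle_\mathcal{H}\in\mathcal{H}_{n-1}$ whenever $F\in\mathcal{H}_n\cap\mathbb{D}^{1,2}$. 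Expanding $\langle DF_m,DF_n\rangle_\mathcal{H}$ in any orthonormal basis of $\mathcal{H}$ and invoking the orthogonality of $\mathcal{H}_{m-1}$ and $\mathcal{H}_{n-1}$ then gives (b).

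For the sufficient condition, assume $\sum_{n\ge1}n\|J_nF\|^2_{L^2(\Omega)}<\infty$. By (a) each $J_nF\in\mathbb{D}^{1,2}$, and by (b) we obtain $\mathbb{E}[\|DF_N\|^2_\mathcal{H}]=\sum_{n=1}^N n\|J_nF\|^2_{L^2(\Omega)}$; hence $(F_N)_N$ is Cauchy in $\mathbb{D}^{1,2}$ and its $L^2$-limit, which equals $F$, lies in $\mathbb{D}^{1,2}$ with $\mathbb{E}[\|DF\|^2_\mathcal{H}]$ given by the claimed series.

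For the necessary condition, approximate $F\in\mathbb{D}^{1,2}$ by smooth variables $G_k\in\mathcal{S}(\mathcal{H}_1)$ in the $\mathbb{D}^{1,2}$-norm; the $L^2$-continuity of $J_n$ yields $J_nG_k\to J_nF$ in $L^2(\Omega)$ for every $n$. The main obstacle is to recover the truncated inequality $\sum_{n=1}^N n\|J_nG_k\|^2_{L^2(\Omega)}\le\mathbb{E}[\|DG_k\|^2_\mathcal{H}]$ for each $N$ and $k$; the natural route is to first show that polynomial random variables are dense in $\mathbb{D}^{1,2}$ (by truncating the polynomial growth in the defining function of each $G_k\in\mathcal{S}(\mathcal{H}_1)$ and controlling both $L^2$ and $D$-gradient norms), which allows one to replace $G_k$ by a polynomial approximant without loss. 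For such a polynomial the chaos decomposition is finite, each $J_n$-component is itself a polynomial and therefore lies in $\mathbb{D}^{1,2}$, and (a)--(b) apply directly to produce the displayed bound. Sending $k\to\infty$ and then $N\to\infty$ yields $\sum_{n\ge 1}n\|J_nF\|^2_{L^2(\Omega)}\le\mathbb{E}[\|DF\|^2_\mathcal{H}]<\infty$, and combining with the sufficient direction already proved upgrades this to equality.
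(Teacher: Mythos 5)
The paper gives no proof of this proposition, deferring to \cite[Proposition 1.2.2]{Nualart}; your argument is essentially the standard one found there --- the identity $\mathbb{E}\left[\|DF\|^2_{\mathcal{H}}\right]=n\|F\|^2_{L^2(\Omega)}$ on each chaos $\mathcal{H}_n$ computed on Hermite variables and extended by density and closability, orthogonality of $D\mathcal{H}_m$ and $D\mathcal{H}_n$ for $m\neq n$, and a Pythagoras argument on the partial sums --- and it is correct. The only step left at the level of a sketch is the density of polynomial random variables in $\mathbb{D}^{1,2}$ invoked in the necessity direction: this is true, but it is better justified by the convergence of Hermite expansions in the finite-dimensional Gaussian Sobolev space $W^{1,2}(\mathbb{R}^m,\gamma_m)$ (equivalently, by an integration by parts identifying the Hermite coefficients of $\partial_i f$ with those of $f$) than by ``truncating the polynomial growth'' of the defining function.
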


Let us emphasize that, once we have fixed the reference probability space $(\Omega, \mathcal{F}, \mathbb{P})$ and the Gaussian Hilbert spaces $\mathcal{H}_1$,
different (infinitely many) choices of the separable Hilbert space $\mathcal{H}$ and the unitary operator $W$ lead to different (infinitely many!) Malliavin derivative operators. 
On the other hand, in view of Proposition \ref{charD12}, all these Malliavin derivatives have the same domain $\mathbb{D}^{1,2}$ when the Gaussian Hilbert space $\mathcal{H}_1$ is the same. In fact the characterization of $\mathbb{D}^{1,2}$ is given in terms of the Wiener chaos decomposition that relies only on the Gaussian Hilbert space $\mathcal{H}_1$ (and not on the choices of $\mathcal{H}$ and $W$). 

%
%
%
%
%
%
%

\section*{Declarations}

\subsection*{Acknowledgments} The authors are grateful to E. Priola and L. Tubaro for numerous useful comments and discussions. 

\subsection*{Fundings} The authors are members of GNAMPA (Gruppo Nazionale per l'Analisi Ma\-te\-ma\-ti\-ca, la Probabilit\`a e le loro Applicazioni) of the Italian Istituto Nazionale di Alta Ma\-te\-ma\-ti\-ca (INdAM). D. A. Bignamini and S. Ferrari have been partially supported by the
INdAM-GNAMPA Project 2023 ``Equazioni differenziali stocastiche e operatori di Kolmogorov in
dimensione infinita'' CUP\_E53C22001930001. M. Zanella has been partially supported by the
INdAM-GNAMPA Project 2023 ``Analisi qualitativa di PDE e PDE stocastiche per modelli fisici'' CUP\_E53C22001930001. The authors have no relevant financial or non-financial interests to disclose.

\subsection*{Research Data Policy and Data Availability Statements} Data sharing not applicable to this article as no datasets were generated or analysed during the current study.

%


\end{document}